\numberwithin{equation}{section}
\newtheorem{theorem}{Theorem}[section]
\newtheorem{corollary}[theorem]{Corollary}
\newtheorem{lemma}[theorem]{Lemma}
\newtheorem{proposition}[theorem]{Proposition}
\theoremstyle{definition}
\newtheorem{remark}[theorem]{Remark}
\numberwithin{equation}{section}
\title[Minimum modulus theorem and ultradifferential operators]{A negative minimum modulus theorem and surjectivity
of ultradifferential operators}
\author[L. Zsid\'o]{L\'aszl\'o Zsid\'o}
\address[L. Zsid\'o]{Dipartimento di Matematica, Universit\`a di Roma "Tor Vergata",
Via della Ricerca Scientifica 1, 00133 Roma, ITALIA}
\email{\tt zsido@mat.uniroma2.it}
\dedicatory{Dedicated to the memory of Professor Ciprian Foia\c s}
\keywords{Ultradistributions, ultradifferential operators, entire functions,
minimum modulus theorems}
\subjclass[2010]{26E10, 34A35, 46F05, 47E99}
\thanks{Supported by INdAM and EU}
\date{15 September 2020}
\begin{document}

\begin{abstract}
In 1979 I. Cior\u{a}nescu and L. Zsid\'o have proved a minimum modulus theorem
for entire functions dominated by the restriction to $(0\;\! ,+\infty )$ of entire functions
of the form $\displaystyle \omega (z)=\prod\limits_{j=1}^{\infty}\Big( 1+\frac{iz}{t_j}\Big) ,
z\in\mathbb{C}\;\!$, with $\displaystyle 0<t_1\leq t_2\leq t_3\leq\;\! ...\;\! \leq +\infty\;\! ,\;\!
t_1<+\infty\;\! ,\;\! \sum\limits_{j=1}^{\infty}\frac 1{t_j}<+\infty\;\!$, and such

\noindent that $\!\displaystyle \int\limits_1^{+\infty}\frac{\ln |\omega (t)|}{t^2} \ln
\frac t{\ln |\omega (t)|}\;\!{\rm d}t<+\infty\;\!$. It implies that for $\omega$ as above,
every $\omega$-ultradifferential operator with constant coefficients and of convergence
type maps some $\mathcal{D}_\rho{\!\! '}\supset\mathcal{D}_\omega{\!\! '}$ onto
itself. Here we show that the above results are sharp, by proving the negative counterpart
of the above minimum modulus theorem$\;\! :$ if $\!\displaystyle \int\limits_1^{+\infty}
\frac{\ln |\omega (t)|}{t^2} \ln\frac t{\ln |\omega (t)|}\;\!{\rm d}t=+\infty\;\!$, then always
there exists an entire function dominated by the restriction to $(0\;\! ,+\infty )$ of $\omega\;\!$,
which does not satisfy the minimum modulus conclusion in the 1979 paper.
It follows that for such $\omega$ there exists an $\omega$-ultradifferential operator with 
constant coefficients and of convergence type, which does not map any $\mathcal{D}_\rho{\!\! '}
\supset\mathcal{D}_\omega{\!\! '}$ onto itself.
\end{abstract}

\maketitle


\section{Introduction}

The main purpose of this paper is to expose (in a slightly completed

\noindent form) the surjectivity criterion for ultradifferential operators with constant coefficients,
given in \cite{C-Z2}, Proposition 2.7, and to prove that this criterion is sharp.

To avoid ambiguity, we notice that
we will use Bourbaki's terminology: "positive" and "strictly positive" instead of "non-negative" and
"positive", as well as "increasing" and "strictly increasing" instead of "non-decreasing" and "increasing".

In Section 2 we present, following \cite{C-Z1}, the current ultradistribution theories on
$\mathbb{R}\;\!$. Up to equivalence, there are two of them.

The first one is parametrized by entire functions of the form
\medskip

\centerline{$\qquad\displaystyle \omega (z)=\prod\limits_{j=1}^{\infty}\Big( 1+\frac{iz}{t_j}\Big)\;\! ,
\qquad z\in\mathbb{C}\;\! ,$}

\centerline{$\qquad\displaystyle \text{where }0<t_1\leq t_2\leq t_3\leq\;\! ...\;\! \leq +\infty\;\! ,\quad
t_1<+\infty\;\! ,\quad \sum\limits_{j=1}^{\infty}\frac 1{t_j}<+\infty\;\! ,$}
\smallskip

\noindent whose set is denoted by $\mathbf{\Omega}\;\!$. $\mathcal{D}_\omega$ is a
strict inductive limit of a sequence of nuclear Fr\'echet spaces, whose elemts are infinitely
differentiable functions of compact support. The strong dual $\mathcal{D}_\omega{\!\! '}$ is
the space of $\omega$-ultradistributions.
$\mathcal{D}_\omega$ can be naturally considered a subspace of $\mathcal{D}_\omega{\!\! '}\;\!$.
If $\omega\;\! ,\rho\in\mathbf{\Omega}$ are such that $|\omega (t)|\leq c\;\! |\rho (t)|$ for some
constant $c>0$ and all $t\in\mathbb{R}\;\!$,
then $\mathcal{D}_\rho\subset\mathcal{D}_\omega$ and $\mathcal{D}_\omega{\!\! '}\subset
\mathcal{D}_\rho{\!\! '}\;\!$.

A second ultradistribution theory is obtained by considering the spaces
$\mathcal{D}_\omega$ and $\mathcal{D}_\omega{\!\! '}$ only for entire functions $\omega$
as above with the $t_j$'s satisfying

\noindent additionally
\smallskip

\centerline{$\qquad\displaystyle 0<t_1\leq \frac{t_2}2\leq \frac{t_3}3\leq\;\! ...\;\! .$}
\medskip

\noindent $\mathbf{\Omega}_0$ will denote the set of these entire functions.

In Section 3 we discuss ultradifferential operators and formulate the main results.

We call a linear map $T : \mathcal{D}_\omega\longrightarrow \mathcal{D}_\omega$
$\omega$-ultradifferential operator whenever the support of $T\varphi$ is contained in
the support of $\varphi\in\mathcal{D}_\omega\;\!$. It is of constant coefficients if it
commutes with the translation operators.

$T$ is an $\omega$-ultradifferential operator of constant coefficients if and only if
there exists an entire function $f$ of exponential type $0$ such that
$|f(it)|\leq c\;\! |\omega (t)|^n ,t\in\mathbb{R}\;\!$, for some $c>0$ and integer
$n\geq 1\;\!$, such that the Fourier transform of $T\varphi$ is the product of the
Fourier transform of $\varphi$ multiplied by $\mathbb{R}\ni t\longmapsto f(it)\;\!$.
In order that $T$ be the convergent Taylor series $f(D)$ of the derivation operator
$D$, $f$ must satisfy the stronger majorization property $|f(z)|\leq c\;\! \big|\omega (|z|)\big|^n,
z\in\mathbb{C}\;\!$, with $c>0$ a constant and $n\geq 1$ an integer. In this case
$T$ is called of convergence type.

Any $\omega$-ultradifferential operator $T$ of constant coefficients can be uniquely
extended to a continuous linear operator $\mathcal{D}_\omega{\!\! '}\longrightarrow
\mathcal{D}_\omega{\!\! '}\;\!$, still denoted by $T$.
A central issue is the characterization of the situation $T\mathcal{D}_\omega{\!\! '}=
\mathcal{D}_\omega{\!\! '}\;\!$, when the equation $f(D)X=F$ has a solution
$X\in \mathcal{D}_\omega{\!\! '}$ for each $F\in \mathcal{D}_\omega{\!\! '}\;\!$,
in terms of the entire function $f$ associated to $T$. Such a criterion was obtained
by I. Cior\u{a}nescu in \cite{Ci}, Proposition 2.4 and Theorem 3.4$\;\! :$
$T\mathcal{D}_\omega{\!\! '}=\mathcal{D}_\omega{\!\! '}$ if and only if $f$ satisfies
a certain minimum modulus condition.

In \cite{C-Z2} a minimum modulus theorem was obtained, which implies that if
\smallskip

\centerline{$\qquad\displaystyle
\int\limits_1^{+\infty}\frac{\ln |\omega (t)|}{t^2}\;\! \ln \frac t{\ln |\omega (t)|}\;\!{\rm d}t
<+\infty$}
\smallskip

\noindent then, for every $\omega$-ultradifferential operator $T$ of constant coefficients
and of convergent type, there exists some $\rho\in\mathbf{\Omega}\;\!$,
$|\omega (t)|\leq c\;\! |\rho (t)|$ for some constant $c>0$ and all $t\in\mathbf{R}\;\!$,
hence such that $\mathcal{D}_\omega{\!\! '}\subset\mathcal{D}_\rho{\!\! '}\;\!$, for
which the surjectivity $T\mathcal{D}_\rho{\!\! '}=\mathcal{D}_\rho{\!\! '}$ holds true.
We complete this result by proving that if $\omega\in\mathbf{\Omega}_0\;\!$, then
we can choose $\rho\in\mathbf{\Omega}_0$ (Theorem \ref{surjectivity}).
To do this, we completed the

\noindent minimum modulus theorem from \cite{C-Z2} correspondingly (Theorem \ref{min}).

On the other hand we prove (Theorem \ref{no-surjectivity}) that if
\smallskip

\centerline{$\qquad\displaystyle
\int\limits_1^{+\infty}\frac{\ln |\omega (t)|}{t^2}\;\! \ln \frac t{\ln |\omega (t)|}\;\!{\rm d}t
=+\infty$}
\smallskip

\noindent then, there exists an $\omega$-ultradifferential operator $T$ of constant coefficients
and of convergent type, such that the surjectivity $T\mathcal{D}_\rho{\!\! '}=\mathcal{D}_\rho{\!\! '}$
can not hold for any $\rho\in\mathbf{\Omega}\;\!$, $|\omega (t)|\leq c\;\! |\rho (t)|$ for some constant
$c>0$ and all $t\in\mathbf{R}$ (Theorem \ref{no-surjectivity}).
This is consequence of the negative minimum modulus theorem (Theorem \ref{no-min}), claiming
that for $\omega$ as above there exists an entire function $f$ such that
$|f(z)|\leq \big|\omega (|z|)\big|^2 , z\in\mathbb{C}\;\!$, but for no increasing function
$\beta : (0\;\! ,+\infty )\longrightarrow (0\;\! ,+\infty )$ with
$\displaystyle \int\limits_1^{+\infty}\frac{\beta (t)}{t^2}\;\!{\rm d}t<+\infty$ can hold the minimum
modulus condition
\begin{equation*}
\sup\limits_{\substack{s\in\mathbb{R} \\ |s-t|\leq \beta (t)}} \ln |f(s)|\geq -\;\! \beta (t)\;\! ,\qquad
t>0\;\! .
\end{equation*}
This negative minimum modulus theorem is the hearth of the paper and is proved in the last,
6th section.

In Section 4 we investigate the majorization of positive functions defined on $(0\;\! ,+\infty )$
with functions $\alpha : (0\;\! ,+\infty )\longrightarrow (0\;\! ,+\infty )$ belonging to different
regularity classes and satisfying the non-quasianalyticity condition
\smallskip

\centerline{$\qquad\displaystyle \int\limits_1^{+\infty}\!\frac{\alpha (t)}{t^2}\;\!{\rm d}t<+\infty\;\! .$}
\smallskip

\noindent (like $(0\;\! ,+\infty )\ni t\longmapsto\ln |\omega (t)|$ for $\omega\in\mathbf{\Omega}$).
These topics are used in the proof of Theorem \ref{min}. Lemma \ref{concave-explicite} could
be of interest for itself.

Section 5 is devoted to increasing functions $\alpha : (0\;\! ,+\infty )\longrightarrow (0\;\! ,+\infty )$

\noindent satisfying
\smallskip

\centerline{$\displaystyle \int\limits_1^{+\infty}\!\frac{\alpha (t)}{t^2}\;\!{\rm d}t<+\infty\;$ and
$\displaystyle \int\limits_1^{+\infty}\!\frac{\alpha (t)}{t^2}\ln \frac t{\alpha (t)}\;\!{\rm d}t=+\infty\;\! .$}
\smallskip

\noindent Discretization of the above conditions is investigated (Propositions \ref{msnq-discrete}
and \ref{msnq-permanence2}) and the case $\alpha (t)=\ln |\omega (t)|\;\! ,\omega\in\mathbf{\Omega}\;\!$,
is characterized (Theorem \ref{msnq-omega}).

\noindent In particular, for $0<t_1\leq t_2\leq t_3\leq\;\! ...\;\! \leq +\infty\;\! , t_1<+\infty\;\!$,
\begin{equation*}
\alpha : (0\;\! ,+\infty )\ni t\longmapsto \ln\Big| \prod\limits_{j=1}^{\infty}\Big( 1+\frac{it}{t_j}\Big)\Big|
\end{equation*}
satisfies the above two conditions if and only if
\medskip

\centerline{$\displaystyle \sum\limits_{j=1}^{\infty}\frac 1{t_j}<+\infty\;$ and
$\displaystyle \sum\limits_{j=1}^{\infty}\frac{\;\!\displaystyle \ln \frac{t_j}j}{t_j}=+\infty\;\! ,$}
\medskip

\noindent what happens, for example, if $t_j=j\;\! (\ln j)\;\! (\ln\ln j)^p ,j\geq 3\;\!$, with $1<p\leq 2\;\!$.
Section 5 actually prepares Section 6.

Finally, in Section 6 the negative minimum modulus theorem Theorem \ref{no-min} is
proved. The proof uses the machinery developed in Section 5 and the key ingredient
Lemma \ref{main-lemma}.
I am indebted to Professor W. K. Hayman for the proof of a statement
very close to Lemma \ref{main-lemma} in the case of $n_j=\alpha (2^j)\;\! ,j\geq 2\;\!$, where
$\displaystyle \alpha (t)=\frac t{(\ln t)(\ln\ln t)^2}\;\! ,t>e\;\!$, sent to me in \cite{H}.
The proof of Lemma \ref{main-lemma} is based on Hayman's ideas, it is actually an adaptation
of Hayman's draft to the general case.

\section{Ultradistribution theories}

In order to enlarge the family of L. Schwartz's distributions, I. M. Gelfand and G. E. Shilov
proposed in \cite{G-S1} (see also \cite{G-S2}, Chapters II and IV) the

\noindent following extension of L. Schwartz's strategy:
consider an appropriate locally convex topological vector space $\mathcal{B}$ of infinitely
differentiable functions such that
\begin{itemize}
\item $\mathcal{B}$ is a Fr\'echet space or a countable inductive limit of Fr\'echet spaces,
\item the topology of $\mathcal{B}$ is stronger than the topology of pointwise convergence.
\end{itemize}
The elements of $\mathcal{B}$ are called {\it basic functions}, and the elements of the dual
$\mathcal{B}'$, {\it generalized functions}. If we "shrink" $\mathcal{B}$, then $\mathcal{B}'$
becomes larger.

The generalized functions $\mathcal{B}'$ are usually called {\it ultradistributions} when, roughly
speaking, disjoint compact sets can be separated by functions which belong to $\mathcal{B}$.
This yields a "lower bound" for $\mathcal{B}$. Ultradistribuion theories are mostly based on
non-quasianaliticity.

Let us briefly sketch, following \cite{C-Z1}, Section 7, what we will here understand by an
ultradistribution theory on the real line $\mathbb{R}$ (a slightly different picture is given in
\cite{Sch}).

Let $\mathfrak{S}$ be a parameter set and assume that to each $\sigma\in\mathfrak{S}$
is associated a locally convex topological vector space $\mathcal{D}_\sigma$ of infinitely
differentiable functions $\mathbb{R}\longrightarrow\mathbb{C}$ with compact support such
that, for every $\sigma\in\mathfrak{S}\;\!$,
\begin{itemize}
\item[(i)] $\mathcal{D}_\sigma$ is an inductive limit of a sequence of Fr\'echet spaces;
\item[(ii)] the topology of $\mathcal{D}_\sigma$ is stronger than the topology of pointwise
convergence;
\item[(iii)] $\mathcal{D}_\sigma$ is an algebra under pointwise multiplication;
\item[(iv)] for $K\subset D\subset\mathbb{R}\;\!$, $K$ compact and $D$ open, there exists
$\varphi\in\mathcal{D}_\sigma$ such that
\begin{equation*}
0\leq\varphi\leq 1\;\! ,\quad \varphi (s)=1\text{ for }s\in K\;\! ,\quad {\rm supp} (\varphi )\subset D\;\! ;
\end{equation*}
\item[(v)] denoting by $\mathcal{E}_\sigma$ the multiplier algebra of $\mathcal{D}_\sigma\;\!$,
that is the set of all functions $\psi : \mathbb{R}\longrightarrow\mathbb{C}$ satisfying
$\varphi \psi\in \mathcal{D}_\sigma\;\! ,\varphi\in \mathcal{D}_\sigma\;\!$, and endowing it with
the projective limit topology defined by the linear mappings
\smallskip

\centerline{$\qquad\mathcal{E}_\sigma\ni\psi\longmapsto \varphi \psi\in \mathcal{D}_\sigma\;\! ,\quad
\varphi\in \mathcal{D}_\sigma\;\! ,$}
\smallskip

\noindent the set $\mathcal{A}$ of all real analytic complex functions on $\mathbb{R}$ is a dense
subset of $\mathcal{E}_\sigma\;\!$.
\end{itemize}
We will say that $\{ \mathcal{D}_\sigma\}_{\sigma\in \mathfrak{S}}$ is a {\it theory of ultradistributions}
and the elements of the dual $\mathcal{D}_\sigma{\!\! '}$ will be called {\it $\sigma$-ultradistributions.}

For $\sigma\in\mathfrak{S}$ and $F\in\mathcal{D}_\sigma{\!\! '}\;\!$, there is a smallest closed
set $S\subset\mathbb{R}$ such that
\smallskip

\centerline{$\varphi\in \mathcal{D}_\sigma\;\! , S\cap {\rm supp} (\varphi )=\emptyset
\;\Longrightarrow F(\varphi )=0\;\! .$}
\smallskip

\noindent Then $S$ is called the {\it support} of $F$ and is denoted by ${\rm supp}(F)\;\!$.
The dual $\mathcal{E}_\sigma{\!\! '}$

\noindent can be identified with the vector space of all $\sigma$-ultradistributions of compact
support, since the restriction map $\mathcal{E}_\sigma{\!\! '}\ni G\longmapsto G\lceil \mathcal{D}_\sigma$
is a linear isomorphism of $\mathcal{D}_\sigma{\!\! '}$ onto
$\{ F\in \mathcal{D}_\sigma{\!\! '}\: ;\, {\rm supp}(F)\text{ compact}\}\;\!$.

By a $\sigma$-ultradifferential operator we mean a linear operator $T : \mathcal{D}_\sigma
\longrightarrow \mathcal{D}_\sigma$ which doesn't enlarge the support:
\smallskip

\centerline{${\rm supp}(T\varphi )\subset {\rm supp}(\varphi )\;\! ,\qquad \varphi\in\mathcal{D}_\sigma\;\! .$}
\smallskip

Let $\{ \mathcal{D}_\sigma\}_{\sigma\in \mathfrak{S}}$ and $\{ \mathcal{D}_\tau\}_{\tau\in \mathfrak{T}}$
be two ultradistribution theories. We say that the ultradistribution theory
$\{ \mathcal{D}_\tau\}_{\tau\in \mathfrak{T}}$ is {\it larger} than $\{ \mathcal{D}_\sigma\}_{\sigma\in \mathfrak{S}}$
if for every $\sigma\in \mathfrak{S}$ there exists some $\tau\in \mathfrak{T}$ such that
$\mathcal{D}_\tau\subset \mathcal{D}_\sigma\;\!$, or equivalently, $\mathcal{E}_\tau\subset \mathcal{E}_\sigma\;\!$.
When this happens then the inclusion maps $\mathcal{D}_\tau\hookrightarrow \mathcal{D}_\sigma$ and
$\mathcal{E}_\tau\hookrightarrow \mathcal{E}_\sigma$ are continuous and have a dense range.

We notice that if $\{ \mathcal{D}_\sigma\}_{\sigma\in \mathfrak{S}}$ and
$\{ \mathcal{D}_\tau\}_{\tau\in \mathfrak{T}}$ are ultradistribution theories and
$\{ \mathcal{D}_\tau\}_{\tau\in \mathfrak{T}}$ is larger than $\{ \mathcal{D}_\sigma\}_{\sigma\in \mathfrak{S}}\;\!$,
then
\medskip

\centerline{$\displaystyle \mathcal{A}\subset\bigcap\limits_{\tau\in \mathfrak{T}}\mathcal{E}_\tau\subset
\bigcap\limits_{\sigma\in \mathfrak{S}}\mathcal{E}_\sigma\;\! .$}
\smallskip

We say that two ultradistribution theories $\{ \mathcal{D}_\sigma\}_{\sigma\in \mathfrak{S}}$ and
$\{ \mathcal{D}_\tau\}_{\tau\in \mathfrak{T}}$ are {\it equivalent} whenever each one of them is
larger than the other.

Let us recall the usual ultradistribution theories. They are labeled by one of the following parameter
sets $\mathfrak{S}\;\!$:
\begin{itemize}
\item $\boldsymbol{\mathcal{M}}$ is the set of all sequences $(M_p)_{p\geq 0}$ in $(0,+\infty )\;\!$,
$M_0=1\;\!$, satisfying
\begin{equation*}
\begin{split}
\qquad& M_p^2\leq M_{p-1}M_{p+1}\;\! ,\; p\geq 1\; (\text{logarithmic convexity})\;\! , \\
\qquad& \sum\limits_{p\geq 1}\frac{M_{p-1}}{M_p}<+\infty\; (\text{non-quasianalyticity})\;\! .
\end{split}
\end{equation*}
\item $\boldsymbol{\mathcal{M}}_0$ is the set of all sequences $(M_p)_p\in \boldsymbol{\mathcal{M}}$
which satisfy the stronger  logarithmic convexity condition
\begin{equation*}
\Big(\frac{M_p}{p!}\Big)^2\leq \frac{M_{p-1}}{(p-1)!}\cdot \frac{M_{p+1}}{(p+1)!}\;\! ,\; p\geq 1\;\! .
\end{equation*}
\item $\boldsymbol{\mathcal{A}}$ is the set of all continuous functions $\alpha : \mathbb{R}\longrightarrow
(0,+\infty )$ satisfying
\begin{equation*}
\begin{split}
\qquad \alpha (0)=0\;\! ,\quad \alpha (&t+s)\leq\alpha (t)+\alpha (s)\text{ for }t\;\! ,s\in
\mathbb{R}\;\text{(subadditivity)}\;\! , \\
\qquad \text{there exist }a\in\mathbb{R}&\text{ and }b>0\text{ such that }\alpha (t)\geq a+b \ln (1+|t|)\;\! ,
t\in\mathbb{R}\;\! , \\
&\qquad \int\limits_{-\infty}^{+\infty}\frac{\alpha (t)}{1+t^2}\;\!{\rm d}t<+\infty\;\! .
\end{split}
\end{equation*}
\item $\mathbf{\Omega}$ is the set of all entire functions $\omega$ of the form
\begin{equation}\label{omega}
\qquad \omega (z)=\prod\limits_{j=1}^{\infty}\Big( 1+\frac{iz}{t_j}\Big)\;\! ,\qquad z\in\mathbb{C}\;\! ,
\end{equation}
\begin{equation*}
\qquad \text{where }0<t_1\leq t_2\leq t_3\leq\;\! ...\;\! \leq +\infty\;\! ,\quad t_1<+\infty\;\! ,\quad
\sum\limits_{j=1}^{\infty}\frac 1{t_j}<+\infty\;\! .
\end{equation*}
\item $\mathbf{\Omega}_0$ is the set of all entire functions $\omega$ of the form
\begin{equation*}
\begin{split}
\qquad &\omega (z)=\prod\limits_{j=1}^{\infty}\Big( 1+\frac{iz}{t_j}\Big)\;\! ,\qquad z\in\mathbb{C}\;\! , \\
\qquad \text{where }0<t_1\leq \frac{t_2}2&\leq \frac{t_3}3\leq\;\! ...\;\! \leq +\infty\;\! ,\quad t_1<+\infty\;\! ,
\quad \sum\limits_{j=1}^{\infty}\frac 1{t_j}<+\infty\;\! .
\end{split}
\end{equation*}
\end{itemize}

Let $(M_p)_p\in \boldsymbol{\mathcal{M}}$ be fixed. For $K\subset\mathbb{R}$ compact and
$h>0\;\!$, let $\mathcal{D}_{\{ M_p\}\;\! ,h}(K)$ denote the vector space of all infinitely differentiable
functions $\varphi : \mathbb{R}\longrightarrow\mathbb{C}$ with ${\rm supp}(\varphi )\subset K$,
satisfying
\begin{equation*}
\|\varphi\|_{\{ M_p\}\;\! ,h}:=\sup_{s\in K,\;\! p\geq 0} \frac 1{h^pM_p}\;\! |\varphi^{(p)}(s)|<+\infty\;\! .
\end{equation*}
Then $\mathcal{D}_{\{ M_p\}\;\! ,h}(K)\;\!$, endowed with the norm $\|\;\!\cdot\;\!\|_{\{ M_p\}\;\! ,h}\;\!$,
becomes a Banach space.

The {\it Roumieu ultradifferentiable functions of class} $(M_p)_p\in \boldsymbol{\mathcal{M}}$ on
$\mathbb{R}\;\!$,

\noindent having compact support, are
\begin{equation*}
\mathcal{D}_{\{ M_p\}}:=\lim_{\substack{\longrightarrow \\ K\subset\mathbb{R}\text{ compact}}}\;
\lim_{\substack{\longrightarrow \\ 0<h\to\infty}} \mathcal{D}_{\{ M_p\}\;\! ,h}(K)
\end{equation*}
(see \cite{R1} or \cite{K}), while the {\it Beurling-Komatsu ultradifferentiable functions of class}
$(M_p)_p\in \boldsymbol{\mathcal{M}}$ on $\mathbb{R}\;\!$, having compact support, are
\begin{equation*}
\mathcal{D}_{( M_p)}:=\lim_{\substack{\longrightarrow \\ K\subset\mathbb{R}\text{ compact}}}\;
\lim_{\substack{\longleftarrow \\ 0<h\to 0}} \mathcal{D}_{\{ M_p\}\;\! ,h}(K)
\end{equation*}
(see \cite{K}). $\{ \mathcal{D}_{\{ M_p\}}\}_{(M_p)_p\in \boldsymbol{\mathcal{M}}}$ and
$\{ \mathcal{D}_{( M_p)}\}_{(M_p)_p\in \boldsymbol{\mathcal{M}}}$ are the {\it Roumieu}
resp. {\it Beurling-Komatsu ultradistribution theories}.

Let now $\alpha\in\boldsymbol{\mathcal{A}}$ be fixed. For $K\subset\mathbb{R}$ compact we
denote by $\mathcal{D}_{\alpha}(K)$ the vector space of all continuous functions
$\varphi : \mathbb{R}\longrightarrow\mathbb{C}$ with ${\rm supp}(\varphi )\subset K$, for which
\begin{equation*}
\|\varphi\|_{\alpha ,\lambda}:=\int\limits_{-\infty}^{+\infty}|\widehat{\varphi}(t)|\;\! e^{\lambda\;\! \alpha (t)}
{\rm d}t<+\infty\;\! ,\qquad \lambda >0\;\! ,
\end{equation*}
where $\widehat{\varphi}$ stands for the Fourier transform of $\varphi\;\!$:
\begin{equation*}
\widehat{\varphi}(t)=\frac 1{2\;\!\pi}\int\limits_{-\infty}^{+\infty}\varphi (s)\;\! e^{-its}{\rm d}s
\end{equation*}
Then $\mathcal{D}_{\alpha}(K)\;\!$, endowed with the family of norms $\|\;\!\cdot\;\!\|_{\alpha ,\lambda}\;\!$,
$\lambda >0\;\!$, becomes a Fr\'echet space.

The {\it Beurling-Bj\"orck ultradifferentiable functions of class} $\alpha\in\boldsymbol{\mathcal{A}}$ on
$\mathbb{R}\;\!$,

\noindent having compact support, are
\begin{equation*}
\mathcal{D}_{\alpha}:=\lim_{\substack{\longrightarrow \\ K\subset\mathbb{R}\text{ compact}}}
\mathcal{D}_{\alpha}(K)
\end{equation*}
(see \cite{Be1} and \cite{Bj}). $\{ \mathcal{D}_{\alpha}\}_{\alpha\in\boldsymbol{\mathcal{A}}}$
is the {\it Beurling-Bj\"orck ultradistribution theory}.

Finally, for $\omega\in\mathbf{\Omega}$ and $K\subset\mathbb{R}$ compact, let
$\mathcal{D}_{\omega}(K)$ be the vector space of all continuous functions
$\varphi : \mathbb{R}\longrightarrow\mathbb{C}$ with ${\rm supp}(\varphi )\subset K$, for which
\begin{equation*}
p_{\omega ,n}(\varphi ):=\sup\limits_{t\in\mathbb{R}}|\widehat{\varphi}(t)\;\!\omega (t)^n|<+\infty\;\! ,
\qquad n\geq 1\;\! .
\end{equation*}
Then $\mathcal{D}_{\omega}(K)\;\!$, endowed with the family of norms $p_{\omega ,n}\;\! ,n\geq 1\;\!$,
becomes a Fr\'echet space.

The {\it $\omega$-ultradifferentiable functions} on $\mathbb{R}\;\!$, having compact support, are
\begin{equation*}
\mathcal{D}_{\omega}:=\lim_{\substack{\longrightarrow \\ K\subset\mathbb{R}\text{ compact}}}
\mathcal{D}_{\omega}(K)
\end{equation*}
(see \cite{C-Z1}, Section 2). $\{ \mathcal{D}_{\omega}\}_{\omega\in\mathbf{\Omega}}$ is the
{\it $\omega$-ultradistribution theory}.

We have to remark that in \cite{C-Z1}, Definition III, $\mathcal{D}_{\omega}(K)$ is defined by using
the norms $p_{\omega ,L,n}\;\! ,L>0\;\! ,n\geq 1\;\!$, where
\begin{equation*}
p_{\omega ,L,n}(\varphi ):=\sup\limits_{t\in\mathbb{R}}|\widehat{\varphi}(t)\;\!\omega (L\;\! t)^n|\;\! .
\end{equation*}
However, with the notation of (\ref{omega}), we have
\begin{equation}\label{seminorms}
|\;\!\omega (L\;\! t)|=\prod\limits_{j=1}^{\infty}\Big( 1+\frac{L^2t^2}{t_k}\Big)^{\! 1/2}\leq
\prod\limits_{j=1}^{\infty}\Big( 1+\frac{t^2}{t_k}\Big)^{\! L^2/2}=|\;\!\omega (t)|^{L^2},
\end{equation}
so the two definitions are equivalent.

We notice that the Roumieu, the Beurling-Komatsu and the Beurling-Bj\"orck ultradistribution theories
were considered also on open subsets of $\mathbb{R}^d$ (see \cite{R2}, \cite{K}, \cite{Bj}), while the
$\omega$-ultradistribution theory, originally considered in \cite{C-Z1} only on $\mathbb{R}$, was
subsequently extended to the multidimensional setting (see \cite{BMT} and \cite{Ab}).
However, in this paper we will restrict us to the one-dimensional case of $\mathbb{R}\;\!$.

In \cite{C-Z1}, 7.4 it was shown that the ultradistribution theories
\begin{equation}\label{full}
\{ \mathcal{D}_{\{ M_p\}}\}_{(M_p)_p\in \boldsymbol{\mathcal{M}}}\;\! ,\quad
\{ \mathcal{D}_{( M_p)}\}_{(M_p)_p\in \boldsymbol{\mathcal{M}}}\;\! ,\quad
\{ \mathcal{D}_{\omega}\}_{\omega\in\mathbf{\Omega}}
\end{equation}
are equivalent. Thus they are just different labelings of the same global set of ultradistributions.
To work with ultradifferential operators, the setting of the $\omega$-ultradistribution theory seems
to be the most advantageous. Therefore we will adopt this setting in the sequel.

We notice that, according to \cite{C-Z3}, Theorem 1, also the ultradistribution theories
\begin{equation}\label{restr}
\{ \mathcal{D}_{\{ M_p\}}\}_{(M_p)_p\in \boldsymbol{\mathcal{M}_0}}\;\! ,\quad
\{ \mathcal{D}_{( M_p)}\}_{(M_p)_p\in \boldsymbol{\mathcal{M}_0}}\;\! ,\quad
\{ \mathcal{D}_{\alpha}\}_{\alpha\in\boldsymbol{\mathcal{A}}}\;\! ,\quad
\{ \mathcal{D}_{\omega}\}_{\omega\in\mathbf{\Omega}_0}
\end{equation}
are equivalent. As was pointed out in \cite{C-Z1}, Section 7.7, $\bigcap\limits_{\omega\in\mathbf{\Omega}}
\mathcal{E}_{\omega}\neq \bigcap\limits_{\omega\in\mathbf{\Omega}_0}\mathcal{E}_{\omega}\;\!$, so
the ultradistribution theories (\ref{full}) are larger than those in (\ref{restr}), but not equivalent to them.

\section{Ultradifferential operators and the main results}

For $\omega\in\mathbf{\Omega}\;\!$, let us consider the $\omega$-ultradifferentiable function spaces
$\mathcal{D}_\omega\;\!$, $\mathcal{E}_\omega\;\!$, as defined in Section 2 ($\mathcal{D}_\omega$
on page 7, and $\mathcal{E}_\omega$ as indicated in (v) on

\noindent page 5).

$\mathcal{D}_\omega$ is strict inductive limit of a sequence of nuclear Fr\'echet spaces and it is
stable under a series of elementary operations like pointwise multiplication, convolution, differentiation,
translations etc. Moreover, these operations are continuous.

$\mathcal{E}_\omega$ is a nuclear Fr\'echet space and has similar stability properties as
$\mathcal{D}_\omega\;\!$. The set $\mathcal{A}$ of all real analytic complex functions on $\mathbb{R}\;\!$,
as well as $\mathcal{D}_\omega\;\!$, are

\noindent dense subsets of $\mathcal{E}_\omega\;\!$.

The space of the $\omega$-ultradistributions is the strong dual $\mathcal{D}_\omega{\!\! '}$ of
$\mathcal{D}_\omega$ and, associating to each $\varphi\in\mathcal{E}_\omega$ the linear functional
\smallskip

\centerline{$\displaystyle \mathcal{D}_\omega\ni\psi\longmapsto\!\int\limits_{-\infty}^{+\infty}\!\varphi (s)\psi (s)
{\rm d}s\;\! ,$}
\smallskip

\noindent we obtain an inclusion map with dense range $\mathcal{E}_\omega \hookrightarrow
\mathcal{D}_\omega{\!\! '}\;\!$.

For all the above facts we send to \cite{C-Z1}, Section 2.

Let now $T : \mathcal{D}_\omega\longrightarrow \mathcal{D}_\omega$ be an {\it $\omega$-ultradifferential
operator}, that is a linear operator satisfying the condition
\smallskip

\centerline{${\rm supp}(T\varphi )\subset {\rm supp}(\varphi )\;\! ,\qquad \varphi\in\mathcal{D}_\omega\;\! .$}
\smallskip

\noindent Then $T$ is continuous and can be (uniquely) extended to a continuous linear operator
$\mathcal{E}_\omega\longrightarrow \mathcal{E}_\omega\;\!$, which will be still denoted by $T$
(\cite{C-Z1}, Theorem 2.16).

We say that an $\omega$-ultradifferential operator is {\it with constant coefficients} if it commutes with
every translation operator.
An immediate consequence of \cite{C-Z1}, Theorem 2.21 is

\begin{proposition}\label{udo-cont.coeff.}
If $f$ is an entire function of exponential type $0$ such that
\begin{equation}\label{udo-cond}
|f(it)|\leq d_0\;\! |\omega (t)^{n_0}|\;\! ,\qquad t\in\mathbb{R}
\end{equation}
for some integer $n_0\geq 1$ and real number $d_0>0\;\!$, then the formula
\begin{equation*}
\widehat{(f(D)\varphi )}(t)=f(it)\;\! \widehat{\varphi}(t)\;\! ,\qquad \varphi\in\mathcal{D}_\omega\;\! ,
t\in\mathbb{R}
\end{equation*}
defines an $\omega$-ultradifferential operator $f(D)$ with constant coefficients. Conversely,
any $\omega$-ultradifferential operator $f(D)$ with constant coefficients is of this form.
\end{proposition}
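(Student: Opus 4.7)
The plan is to derive both implications from the general structure theorem \cite{C-Z1}, Theorem 2.21 for $\omega$-ultradifferential operators, which the paper flags as the source of this statement. For the forward direction, given an entire function $f$ of exponential type $0$ satisfying \eqref{udo-cond}, I would first check that the prescription $\widehat{f(D)\varphi}(t) = f(it)\widehat{\varphi}(t)$ really defines a map $\mathcal{D}_\omega \to \mathcal{D}_\omega$. For $\varphi \in \mathcal{D}_\omega(K)$, the seminorms $p_{\omega ,n}(\varphi )$ control the rapid decay of $\widehat{\varphi}\cdot\omega^{n}$ on $\mathbb{R}$; multiplying by $f(it)$ costs only $n_0$ factors of $\omega$ by \eqref{udo-cond}, so $t\mapsto f(it)\widehat{\varphi}(t)$ retains rapid $\omega$-decay, giving finiteness of $p_{\omega ,n}(f(D)\varphi )$ for every $n$. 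Since $f$ has exponential type $0$ and the Paley--Wiener extension of $\widehat{\varphi}$ has some finite exponential type $h_K$ associated with $K$, the entire product $z\mapsto f(iz)\widehat{\varphi}(z)$ is still of exponential type $h_K$, so the inverse Paley--Wiener theorem places $f(D)\varphi$ back in $\mathcal{D}_\omega(K)$. Support-preservation and translation invariance then follow automatically from the Fourier multiplier formulation.

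For the converse direction, let $T$ be any $\omega$-ultradifferential operator with constant coefficients. The support condition together with translation invariance force $T$ to be convolution with an $\omega$-ultradistribution $F\in\mathcal{E}_\omega{\!\! '}$ supported at $\{0\}$; this is the substantive content of \cite{C-Z1}, Theorem 2.21. Setting $f(z):=\langle F_s,\, e^{zs}\rangle$, the Paley--Wiener--Schwartz--type theorem for compactly supported $\omega$-ultradistributions gives that $f$ is entire, and the fact that $\operatorname{supp}(F)=\{0\}$ forces the exponential type to be $0$. Finally, the bound \eqref{udo-cond} encodes exactly that $F$ is an $\omega$-ultradistribution of compact support: the continuity of $F$ on $\mathcal{E}_\omega$ translates, via the Fourier transform, into a bound $|f(it)|\leq d_0\;\!|\omega (t)|^{n_0}$ on the real axis, and conversely such a bound suffices to define $F$ as an element of $\mathcal{E}_\omega{\!\! '}$.

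The only genuine obstacle is hidden inside \cite{C-Z1}, Theorem 2.21, which supplies the structural statement that translation-invariant ultradifferential operators correspond to ultradistributions supported at the origin. Granted that, the present proposition is essentially a dictionary between the convolution-by-$F$ picture and the multiplication-by-$f(it)$ picture on the Fourier side. The one subtlety worth flagging is that \eqref{udo-cond} is a bound only on the imaginary axis $f(it)$, not on $f$ throughout $\mathbb{C}$; this is precisely why $f$ is merely required to be of exponential type $0$ here and not of the stronger \emph{convergence type}, which demands the majorization $|f(z)|\leq c\;\!|\omega (|z|)|^{n}$ on all of $\mathbb{C}$ and which enters only when one wants $T$ to coincide with the Taylor series $f(D)$ of the derivation.
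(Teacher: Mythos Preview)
Your proposal is correct and follows the same approach as the paper: both defer to \cite{C-Z1}, Theorem 2.21 as the source of the structural result. In fact the paper gives no proof at all beyond the sentence ``An immediate consequence of \cite{C-Z1}, Theorem 2.21 is'' preceding the statement and a bare $\square$ after it, so your sketch of how the Fourier-multiplier/convolution dictionary is read off from that theorem is already more detailed than what the paper supplies.
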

\noindent\hspace{12.3 cm}$\square$
\smallskip

If $f$ is an entire function of exponential type $0\;\!$, satisfying (\ref{udo-cond}) for some
$n_0\geq 1$ and $d_0>0\;\!$, then the  $\omega$-ultradifferential operator $f(D) : \mathcal{E}_\omega
\longrightarrow \mathcal{E}_\omega$

\noindent with constent coefficients can be extended to a continuous linear operator

\noindent $\mathcal{D}_\omega{\!\! '} \longrightarrow \mathcal{D}_\omega{\!\! '}\;\!$, which
we will still denote by $f(D)$ (see \cite{C-Z1}, discussion before 

\noindent Theorem 3.5).

Denoting by $\delta_{s_o}$ the Dirac measure concentrated at
$s_0\in\mathbb{R}\;\!$, considered an $\omega$-ultradistribution of support $\{ s_0\}\;\!$,
for each $\omega$-ultradistribution $F$ withsupport $\{ s_0\}$ there exists an entire function
as above such that $T=f(D)\delta_{s_0}$ (see \cite{C-Z1}, Theorem 3.5).
\smallskip

If $\displaystyle \omega (z)=\prod\limits_{j=1}^{\infty}\Big( 1+\frac{iz}{t_j}\Big)\;\! ,z\in\mathbb{C}\;\! ,$
where

\centerline{$\displaystyle 0<t_1\leq t_2\leq t_3\leq\;\! ...\;\! \leq +\infty\;\! ,\quad t_1<+\infty\;\! ,\quad
\sum\limits_{j=1}^{\infty}\frac 1{t_j}<+\infty\;\! ,$}
\smallskip

\noindent then, for $n\geq 1$ and $k\geq 0$ integers, we denote by $a^{\omega , n}_k$ the square
root of the coefficient of $z^k$ in the power series expansion of the entire function
\medskip

\centerline{$\displaystyle \mathbb{C}\ni z\longmapsto \big(\omega (z)\;\!\overline{\omega}(z)\big)^n=
\prod\limits_{j=1}^{\infty}\Big( 1+\frac{z^2}{t_k^2}\Big)^{\! n}$}

\noindent ($\overline{\omega}(z)$ stands here, as usual, for $\overline{\omega (\overline{z})}\,$).
We recall (see \cite{C-Z1}, page 109):
\begin{equation*}
a^{\omega , n}_k\leq a^{\omega , n+1}_k\;\! ,\qquad n\geq 1\;\! ,k\geq 0\;\! ;
\end{equation*}
\begin{equation}\label{fct-a}
\sup\limits_{p\geq 0}a^{\omega , n}_p |t|^p\leq |\omega (t)^n|\leq\sqrt{2} \sup\limits_{p\geq 0}
a^{\omega , n}_p \big|\sqrt{2}\;\! t\big|^p\;\! ,\qquad t\in\mathbb{R}\;\! .
\end{equation}
We have also, according to \cite{C-Z1}, Corollary 2.9,
\begin{equation}\label{a-logconv}
\big( a^{\omega , n}_k\big)^2\geq a^{\omega , n}_{k-1}\cdot a^{\omega , n}_{k+1}\;\! ,\qquad
n\;\! ,k\geq 1\text{ integers.}
\end{equation}
A useful consequence of (\ref{a-logconv}) is
\begin{equation}\label{compute-sup}
 \Big( \frac{a^{\omega , n}_k}{a^{\omega , n}_{k-1}}\Big)^k
\sup\limits_{p\geq 0} a^{\omega , n}_p\Big( \frac{a^{\omega , n_1}_{k-1}}{a^{\omega , n}_k}\Big)^p
=a^{\omega , n}_k\;\! ,\qquad n\;\! ,k\geq 1\text{ integers.}
\end{equation}
Indeed, since
\begin{equation*}
 \Big( \frac{a^{\omega , n}_k}{a^{\omega , n}_{k-1}}\Big)^k
\sup\limits_{p\geq 0} a^{\omega , n}_p\Big( \frac{a^{\omega , n_1}_{k-1}}{a^{\omega , n}_k}\Big)^p
=\Big( \frac{a^{\omega , n}_k}{a^{\omega , n}_{k-1}}\Big)^k
\max\bigg( 1\;\! ,\sup\limits_{p\geq 1}\prod\limits_{q=1}^p\Big( \frac{a^{\omega , n}_q}{a^{\omega , n}_{q-1}}
\cdot \frac{a^{\omega , n}_{k-1}}{a^{\omega , n}_k}\Big) \bigg)
\end{equation*}
and, by (\ref{a-logconv}),
\begin{equation*}
\frac{a^{\omega , n}_q}{a^{\omega , n}_{q-1}}\cdot \frac{a^{\omega , n}_{k-1}}{a^{\omega , n}_k}
\begin{cases}
\; \geq 1 &\!\!\! \text{for }\, q\leq k \\
\; \leq 1 &\!\!\! \text{for }\, q\geq k
\end{cases}\, ,
\end{equation*}
we deduce:
\begin{equation*}
 \Big( \frac{a^{\omega , n}_k}{a^{\omega , n}_{k-1}}\Big)^k
\sup\limits_{p\geq 0} a^{\omega , n}_p\Big( \frac{a^{\omega , n_1}_{k-1}}{a^{\omega , n}_k}\Big)^p
=\Big( \frac{a^{\omega , n}_k}{a^{\omega , n}_{k-1}}\Big)^k
\prod\limits_{q=1}^k\Big( \frac{a^{\omega , n}_q}{a^{\omega , n}_{q-1}}
\cdot \frac{a^{\omega , n}_{k-1}}{a^{\omega , n}_k}\Big) =a^{\omega , n}_k\;\! .
\end{equation*}
(\ref{compute-sup}) implies immediately:
\begin{equation}\label{compute-inf-sup}
\min\limits_{t>o}\frac 1{t^k}\;\! \sup\limits_{p\geq 0} a^{\omega , n}_p\;\! t^p=a^{\omega , n}_k\;\! ,\qquad
n\geq 1\;\! ,k\geq 0\;\! .
\end{equation}

We notice also the inequality
\begin{equation}\label{mod.omega}
\big|\omega (z)\big| =\prod\limits_{j=1}^{\infty}\Big| 1+\frac{iz}{t_j}\Big|\leq
\prod\limits_{j=1}^{\infty}\Big( 1+\frac{|z|}{t_j}\Big) =\omega (-\;\! i\;\! |z|)\;\! ,\qquad z\in\mathbb{C}\;\! .
\end{equation}

If $P$ is a polynomial with complex coefficients and $P(z)=\sum\limits_{k=0}^nc_k z^k$, then
$P(D)=\sum\limits_{k=0}^nc_k D^k$ where $D$ is the derivation operator.
The next proposition,

\noindent a variant of \cite{C-Z1}, Theorem 2.25, characterizes those
$\omega$-ultradifferential operators with constant coefficients, which can be expanded in
power series in $D$.

\begin{proposition}\label{conv.type}
Let $f$ be an entire function of exponential type $0$ such

\noindent that $($\ref{udo-cond}$)$ holds true for some
$n_0\geq 1$ and $d_0>0\;\!$, and $f(z)=\sum\limits_{k=0}^{\infty}c_k z^k$ its expansion in a
power series. Then the following statements are equivalent $:$
\begin{itemize}
\item[(i)] There exist an integer $n_1\geq 1$ and a real number $d_1>0$ such that
\begin{equation*}
\qquad |f(z)|\leq d_1\;\! \big|\;\!\omega (|z|)^{n_1}\!\big|\;\! ,\qquad z\in\mathbb{C}\;\! .
\end{equation*}
\item[(ii)] There exist an integer $n_2\geq 1$ and real numbers $L_2\;\! ,d_2>0$
such that
\begin{equation*}
\qquad |c_k|\leq d_2\;\! L_2^{k}\;\! a^{\omega , n_2}_k\;\! ,\qquad k\geq 0\;\! .
\end{equation*}
\item[(iii)] We have $\displaystyle f(D)=\sum\limits_{k=0}^{\infty}c_k D^k$,
where the series converges in the vector space of all continuous linear maps
$\mathcal{E}_\omega\longrightarrow \mathcal{E}_\omega\;\!$, endowed with the topology
of the uniform convergence on the bounded subsets of $\mathcal{E}_\omega\;\!$.
\item[(iv)] We have $\displaystyle f(D)=\sum\limits_{k=0}^{\infty}c_k D^k$,
where the series converges in the vector space of all continuous linear maps
$\mathcal{E}_\omega\longrightarrow \mathcal{E}_\omega\;\!$, endowed with the topology
of the pointwise convergence.
\end{itemize}
\end{proposition}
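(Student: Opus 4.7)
\medskip

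The plan is to establish the cyclic chain (i)$\,\Rightarrow\,$(ii)$\,\Rightarrow\,$(iii)$\,\Rightarrow\,$(iv)$\,\Rightarrow\,$(ii) together with (ii)$\,\Rightarrow\,$(i), following the pattern of \cite{C-Z1}, Theorem 2.25, but exploiting systematically the Cauchy--Hadamard-type duality packaged in the formulas (\ref{fct-a}) and (\ref{compute-inf-sup}). The pivot is the pair of inequalities $a^{\omega,n}_k|t|^k\leq |\omega(t)^n|$ and $|\omega(t)^n|\leq\sqrt{2}\sup_p a^{\omega,n}_p(\sqrt{2}\,t)^p$: one of them transforms a pointwise bound on $f$ into a coefficient bound, the other does the reverse. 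The implication (iii)$\,\Rightarrow\,$(iv) is immediate since uniform convergence on bounded sets implies pointwise convergence.

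For (i)$\,\Rightarrow\,$(ii) I would write Cauchy's estimate on $|z|=r>0$,
\begin{equation*}
|c_k|\leq r^{-k}\max_{|z|=r}|f(z)|\leq d_1 r^{-k}|\omega(r)|^{n_1},
\end{equation*}
substitute the right-hand inequality of (\ref{fct-a}) to get $|c_k|\leq d_1\sqrt{2}(\sqrt{2})^k s^{-k}\sup_p a^{\omega,n_1}_p s^p$ with $s=\sqrt{2}\,r$, and then minimize in $s>0$ using (\ref{compute-inf-sup}). This yields (ii) with $n_2=n_1$, $L_2=2$, $d_2=d_1\sqrt{2}$. Conversely, (ii)$\,\Rightarrow\,$(i) follows by majorizing $|f(z)|\leq d_2\sum_k a^{\omega,n_2}_k(L_2|z|)^k$, applying Cauchy--Schwarz against the geometric series $\sum 2^{-k}$ to obtain $\sum_k a^{\omega,n_2}_k t^k\leq\sqrt{2}\,|\omega(\sqrt{2}\,t)|^{n_2}$, and absorbing the dilation via (\ref{seminorms}): $|\omega(\sqrt{2}\,L_2|z|)|^{n_2}\leq |\omega(|z|)|^{2L_2^2 n_2}$ (assuming without loss of generality $\sqrt{2}\,L_2\geq 1$, and rounding the exponent up since $|\omega(t)|\geq 1$ on~$\mathbb{R}$).

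The implication (ii)$\,\Rightarrow\,$(iii) rests on the description, recalled in \cite{C-Z1}, Section~2, of the bounded subsets $B\subset\mathcal{E}_\omega$: for each such $B$ one can find an integer $m\geq 1$, a constant $M>0$ and $L>0$ so that every continuous seminorm of $\mathcal{E}_\omega$ evaluated on $D^k\psi$, $\psi\in B$, is dominated by $M\,L^k/a^{\omega,m}_k$ (this is dual to (\ref{fct-a}) applied to $\widehat{D^k\psi}(t)=(it)^k\widehat\psi(t)$). Combining this with (ii) and the log-convexity (\ref{a-logconv}) to control the ratios $a^{\omega,n_2}_k/a^{\omega,m+n_2+1}_k$, a geometric series majorant for $\sum_k c_k D^k\psi$ is obtained uniformly in $\psi\in B$ and in every continuous seminorm of $\mathcal{E}_\omega$.

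The last, and main, obstacle is (iv)$\,\Rightarrow\,$(ii): from mere pointwise convergence we must extract a quantitative bound on~$c_k$. I would argue as follows. By Banach--Steinhaus applied in the LF-space setting (the partial sums $S_N=\sum_{k=0}^N c_k D^k$ are continuous and pointwise bounded on $\mathcal{E}_\omega$), the family $\{S_N\}$ is equicontinuous, so its limit is a continuous $\omega$-ultradifferential operator with constant coefficients, which by Proposition~\ref{udo-cont.coeff.} must coincide with $f(D)$. Equicontinuity provides an integer $m$, $L>0$ and a constant $C>0$ such that the seminorm $\sup_{t\in\mathbb{R}}|(\widehat{S_N\psi})(t)\,\omega(t)^{-m}|$ is uniformly bounded when $\psi$ runs over a fixed bounded set; choosing $\psi$'s whose Fourier transforms are appropriately concentrated (as constructed in \cite{C-Z1}, Section~2) and isolating the $k$-th term via finite-difference combinations of partial sums, one recovers $|c_k|\leq d_2\,L_2^k a^{\omega,n_2}_k$ by invoking (\ref{compute-inf-sup}) once more. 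The delicate point is the construction of the test functions that make the $k$-th Fourier coefficient of $f(it)$ emerge with a clean bound in terms of $a^{\omega,n_2}_k$, but this is exactly what (\ref{compute-inf-sup}) is designed for.
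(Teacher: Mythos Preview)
Your arguments for (i)$\Leftrightarrow$(ii), (ii)$\Rightarrow$(iii) and (iii)$\Rightarrow$(iv) follow the paper's proof essentially verbatim (the paper simply cites \cite{C-Z1}, Proposition~2.24 for (ii)$\Rightarrow$(iii), and your (ii)$\Rightarrow$(i) via Cauchy--Schwarz is a harmless variant of the paper's ``split off $2^{-k}$ and take the sup'' trick; the constant $L_2=2$ you quote in (i)$\Rightarrow$(ii) should be $\sqrt{2}$, but this is inconsequential).

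The step (iv)$\Rightarrow$(ii), however, is where your sketch becomes vague and the paper's proof is genuinely cleaner. You propose operator-level Banach--Steinhaus on the partial sums $S_N$ (note: $\mathcal{E}_\omega$ is Fr\'echet, not LF), then extraction of $c_k$ via ``test functions whose Fourier transforms are appropriately concentrated'' and ``finite-difference combinations of partial sums''. This is not worked out, and the Fourier-side seminorm you write down is a $\mathcal{D}_\omega$-seminorm, not an $\mathcal{E}_\omega$-seminorm. The paper bypasses all of this: from (iv) the individual terms $c_kD^k\varphi\to 0$ in $\mathcal{E}_\omega$, hence the \emph{scalar} functionals $\varphi\mapsto c_k\varphi^{(k)}(0)$ are pointwise bounded on $\mathcal{E}_\omega$, hence equicontinuous by barrelledness. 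This gives directly
\[
|c_k\varphi^{(k)}(0)|\leq d\cdot r^K_{\omega,L,n}(\varphi),\qquad k\geq 0,\ \varphi\in\mathcal{E}_\omega,
\]
for some fixed $K,L,n,d$. Now simply take $\varphi=e^{i\alpha\,\cdot}\in\mathcal{E}_\omega$ (no compactly supported or Fourier-concentrated construction is needed): since $\varphi^{(k)}(0)=(i\alpha)^k$ and $r^K_{\omega,L,n}(e^{i\alpha\cdot})=\sup_p L^p a^{\omega,n}_p\alpha^p$, one obtains $|c_k|\alpha^k\leq d\sup_p L^p a^{\omega,n}_p\alpha^p$, and minimizing over $\alpha>0$ via (\ref{compute-inf-sup}) yields (ii) immediately. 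The exponential test functions are the missing concrete ingredient in your sketch.
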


\begin{proof}
For (i)$\;\!\Rightarrow$(ii).
Using the Cauchy estimate, (i) and (\ref{fct-a}), we obtain for any integer $k\geq 0$ and
real $r>0\;\!$:
\medskip

\centerline{$\displaystyle |c_k|\leq \frac 1{r^k}\sup\limits_{|z|=r}|f(z)|\leq d_1\;\!  \frac 1{r^k}\sup\limits_{|z|=r}
\big|\;\!\omega (|z|)^{n_1}\!\big|
\leq \sqrt{2}\;\! d_1\;\! \frac 1{r^k}\sup\limits_{p\geq 0}a^{\omega , n_1}_p \big(\sqrt{2}\;\! r\big)^p\;\! .$}
\smallskip

\noindent Using now (\ref{compute-inf-sup}), we infer:
\begin{equation*}
\begin{split}
|c_k|\leq\;& \sqrt{2}\;\! d_1\;\! \inf\limits_{r>0}
\frac 1{r^k}\sup\limits_{p\geq 0}a^{\omega , n_1}_p \big(\sqrt{2}\;\! r\big)^p
=\sqrt{2}\;\! d_1\;\! \inf\limits_{t>0} \Big(\frac{\sqrt{2}}t\;\!\Big)^{\! k} \sup\limits_{p\geq 0}a^{\omega , n_1}_p t^p \\
=\;& \sqrt{2}\;\! d_1 \big(\sqrt{2}\,\big)^ka^{\omega , n_1}_k\;\! ,\qquad k\geq 0\;\! .
\end{split}
\end{equation*}
Thus (ii) holds with $n_2=n_1\;\! , L_2=\sqrt{2}\;\! ,d_2=\sqrt{2}\;\! d_1\;\!$.
\smallskip

For (ii)$\;\!\Rightarrow$(i). Using (ii) and the first inequality in (\ref{fct-a}), we deduce:
\begin{equation*}
\begin{split}
|f(z)|\leq\;& \sum\limits_{k=0}^{\infty}|c_k|\;\! |z|^k\leq
d_2\sum\limits_{k=0}^{\infty}a^{\omega , n_2}_k\;\! \big( L_2\;\! |z|\big)^k
=d_2\sum\limits_{k=0}^{\infty}\frac 1{2^k}\;\!a^{\omega , n_2}_k\;\! \big( 2\;\! L_2\;\! |z|\big)^k \\
\leq\;& d_2\Big(\sum\limits_{k=0}^{\infty}\frac 1{2^k}\Big) \sup\limits_{k\geq 0}
a^{\omega , n_2}_k\;\! \big( 2\;\! L_2\;\! |z|\big)^k
\leq 2\;\! d_2\;\! \big|\;\!\omega \big( 2\;\! L_2\;\! |z|\big)^{n_2}\!\big|\;\! .
\end{split}
\end{equation*}
Choosing some integer $m\geq 2\;\! L_2$ and using (\ref{seminorms}), we obtain
\begin{equation*}
|f(z)|\leq 2\;\! d_2\;\! \big|\;\!\omega \big( m\;\! |z|\big)^{n_2}\!\big|\leq
2\;\! d_2\;\! \big|\;\!\omega (|z|)^{m^2 n_2}\big|\;\! ,
\end{equation*}
hence (i) holds with $n_1=m^2n_2$ and $d_1=2\;\! d_2\;\!$.
\smallskip

Implication (ii)$\;\!\Rightarrow$(iii) follows by \cite{C-Z1}, Proposition 2.24, and
implication (iii)$\;\!\Rightarrow$(iv) is trivial.
\smallskip

Finally, for the proof of (iv)$\;\!\Rightarrow$(ii) we adapt the proof of (iv)$\;\!\Rightarrow$(ii)
in \cite{C-Z1}, Theorem 2.25 as follows.

(iv) implies that the sequence $(c_k\;\! D^k\varphi )_{k\geq 0}=(c_k\;\! \varphi^{(k)} )_{k\geq 0}$
converges in $\mathcal{E}_{\omega}$ to $0$ for every $\varphi\in \mathcal{E}_{\omega}\;\!$.
Therefore the sequence
$\mathcal{E}_{\omega}\ni\varphi\longmapsto c_k\varphi^{(k)}(0)\;\! , k\geq 0\;\! ,$ is pointwise
convergent to $0$ in $\mathcal{E}_{\omega}{\!\! '}\;\!$, in particular it is pointwise bounded.
Since $\mathcal{E}_{\omega}$ is a Fr\'echet space, and hence barrelled, if follows
that the above sequence in $\mathcal{E}_{\omega}{\!\! '}$ is equicontinuous (see e.g.
\cite{Bou}, Ch. III, \S 4, Section 1).

Recalling that the topology of $\mathcal{E}_{\omega}$ is defined by the semi-norms
\smallskip

\centerline{$\displaystyle r_{\omega ,L,n}^{K} : \mathcal{E}_{\omega}\ni\varphi\longmapsto
\sup\limits_{p\geq 0}\big( L^pa^{\omega , n}_p\sup\limits_{s\in K} |\varphi^{(p)}(s)|\big)\;\! ,$}
\smallskip

\noindent where $K\subset\mathbb{R}$ is compact, $L>0$ and $n\geq 1$ is an integer (see
\cite{C-Z1}, Definition V on page 110), we deduce the existence of some $K\;\! ,L\;\! ,n$
and of a constant $d>0$ such that
\begin{equation*}
|c_k\varphi^{(k)}(0)|\leq d\cdot r_{\omega ,L,n}^{K}(\varphi )\;\! ,\qquad k\geq 0\;\! ,\varphi\in
\mathcal{E}_{\omega}\;\! .
\end{equation*}
Applying this inequality to $\varphi =e^{i\;\!\alpha\;\!\cdot}, \alpha >0\;\!$, we obtain
\begin{equation*}
|c_k|\cdot \alpha^k\leq d\cdot\sup\limits_{p\geq 0}\big( L^pa^{\omega , n}_p \alpha^p\big)\;\! ,
\qquad k\geq 0\;\! ,\alpha >0\;\! .
\end{equation*}
Therefore, using (\ref{compute-inf-sup}), we infer:
\begin{equation*}
|c_k|\leq d \inf\limits_{\alpha >0}\frac 1{\alpha^k} \sup\limits_{p\geq 0}a^{\omega , n}_p
(L\;\!\alpha )^p =d\;\! \inf\limits_{t>0} \Big(\frac Lt\;\!\Big)^{\! k}\! \sup\limits_{p\geq 0}a^{\omega , n}_p t^p
=d\;\! L^ka^{\omega , n}_k\;\! ,\quad k\geq 0\;\! .
\end{equation*}
In other words, (ii) holds with $n_2=n\;\! , L_2=L\;\! ,d_2=d\;\!$.

\end{proof}

If $f$ is an entire function satisfying the equivalent conditions in Proposition \ref{conv.type},
then we will say that the $\omega$-ultradifferential operator with constant coefficients
$f(D)$ is {\it of convergence type}.
\smallskip

Proposition \ref{conv.type} enables to prove a description of those $\omega\in\mathbf{\Omega}\;\!$,
for which every $\omega$-ultradifferential operator with constant coefficients is of convergence type.
This description is essentially \cite{C-Z1}, Theorem 2.25.

\begin{corollary}\label{strong-n.q.}
The following statements concerning $\omega\in\mathbf{\Omega}$ are equivalent$:$
\begin{itemize}
\item[(j)] There exist an integer $n_1\geq 1$ and a real number $d_1>0$ such that
\begin{equation*}
\qquad \big|\omega (-it)\big|\leq d_1\;\! \big|\;\!\omega (t)^{n_1}\!\big|\;\! ,\qquad t\in\mathbb{R}\;\! .
\end{equation*}
\item[(jj)] There exist an integer $n_2\geq 1$ and a real number $d_2>0$ such that
\begin{equation*}
\qquad \big|\omega (z)\big|\leq d_2\;\! \big|\;\!\omega (|z|)^{n_2}\!\big|\;\! ,\qquad z\in\mathbb{C}\;\! .
\end{equation*}
\item[(jjj)] The $\omega$-ultradifferential operator with constant coefficients $\omega (-iD)$

\noindent is of convergence type.
\item[(jw)] Every $\omega$-ultradifferential operator with constant coefficients is of

\noindent convergence type.
\end{itemize}
\end{corollary}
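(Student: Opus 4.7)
The plan is to prove the cycle $(\text{jw})\Rightarrow(\text{jjj})\Leftrightarrow(\text{jj})\Leftrightarrow(\text{j})$ together with $(\text{jj})\Rightarrow(\text{jw})$, which gives all four equivalences.

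The equivalence (jj)$\Leftrightarrow$(jjj) is a direct application of Proposition~\ref{conv.type}. Indeed, $F(z):=\omega(-iz)$ is entire of exponential type $0$ with $|F(it)|=|\omega(t)|$, so by Proposition~\ref{udo-cont.coeff.} the operator $\omega(-iD)=F(D)$ is an $\omega$-ultradifferential operator with constant coefficients. Proposition~\ref{conv.type}(i) applied to $F$ says that $\omega(-iD)$ is of convergence type iff $|\omega(-iz)|\leq d_1|\omega(|z|)|^{n_1}$; via the modulus-preserving substitution $w=-iz$ this is exactly (jj). The equivalence (j)$\Leftrightarrow$(jj) is essentially a change of variable: setting $z=-it$ in (jj) and using that $|\omega(t)|$ depends only on $|t|$ for $t\in\mathbb{R}$ gives (j); conversely, combining (j) at $t=|z|\geq 0$ with the pointwise inequality $|\omega(z)|\leq\omega(-i|z|)$ from (\ref{mod.omega}) gives (jj). Finally (jw)$\Rightarrow$(jjj) is trivial since $\omega(-iD)$ itself is one such operator.

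The substantive step is (jj)$\Rightarrow$(jw). Let $f(D)$ be any $\omega$-ultradifferential operator with constant coefficients; Proposition~\ref{udo-cont.coeff.} gives an entire $f$ of exponential type $0$ with $|f(it)|\leq d_0|\omega(t)|^{n_0}$ on $\mathbb{R}$. The crucial step is to promote this imaginary-axis bound to the global estimate $|f(z)|\leq d_0'\;\!\omega(-i|z|)^{n_0}$ on $\mathbb{C}$, via a Phragmen-Lindelof argument for entire functions of exponential type $0$: after rotating to $g(w):=f(iw)$ one has a real-axis bound $|g(t)|\leq d_0|\omega(t)|^{n_0}$ and seeks a bound by the natural circle-maximum majorant $\omega(-i|w|)^{n_0}$ furnished by (\ref{mod.omega}). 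Once this is in hand, combining with (j) at $t=|z|$ gives $|f(z)|\leq d_0'\;\!d_1^{n_0}|\omega(|z|)|^{n_0 n_1}$, which is precisely condition (i) of Proposition~\ref{conv.type}; hence $f(D)$ is of convergence type, proving (jw).

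The main obstacle I expect is the Phragmen-Lindelof step. Since $\omega(-iz)$ has zeros on the positive real axis (at $z=t_j$), one cannot naively apply the maximum principle to $f(z)/\omega(-iz)^{n_0}$; instead one invokes the Poisson-Jensen formula (or an Ahlfors-type distortion estimate) and exploits $\sum 1/t_j<+\infty$ to control the zero contributions, in the spirit of the growth estimates developed in \cite{C-Z1}.
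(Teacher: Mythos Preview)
Your overall architecture matches the paper's: the equivalences (j)$\Leftrightarrow$(jj) via (\ref{mod.omega}), (jj)$\Leftrightarrow$(jjj) via Proposition~\ref{conv.type}, and the trivial (jw)$\Rightarrow$(jjj) are handled exactly as in the paper, and you correctly single out (jj)$\Rightarrow$(jw) as the substantive step requiring a Phragm\'en--Lindel\"of type argument.

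Where you diverge is in the execution of that step, and here your plan is vaguer and more roundabout than necessary. The paper does not aim directly for the radial bound $|f(z)|\le d_0'\,\omega(-i|z|)^{n_0}$ and does not need Poisson--Jensen or distortion estimates. Instead it sets $\rho(z)=d_0\,\omega(z)^{n_0}$ and observes that $\rho$ is an entire function of \emph{class~$P$} in the sense of Levin (\cite{L}, Ch.~VII, \S4): exponential type $0$, with all zeros $it_j$ in the upper half-plane. Levin's lemma (\cite{L}, Ch.~IX, \S4, Lemma~1) then upgrades the real-axis bound $|f(it)|\le|\rho(t)|$ to the half-plane bounds
\[
|f(iz)|\le d_0\,|\omega(z)|^{n_0}\ \ (\operatorname{Im}z\le 0),\qquad
|f(iz)|\le d_0\,|\omega(\bar z)|^{n_0}\ \ (\operatorname{Im}z\ge 0),
\]
after which one applies (jj) to $\omega(z)$ and $\omega(\bar z)$ separately to reach $|f(iz)|\le d_0\,d_2^{\,n_0}|\omega(|z|)|^{n_0 n_2}$. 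The point is that $\omega$ has \emph{no} zeros in the closed lower half-plane (indeed $|\omega(z)|\ge 1$ there, since $|1+iz/t_j|^2=(1+y/t_j)^2+(x/t_j)^2\ge 1$ for $z=x-iy$, $y\ge 0$), so $f(i\,\cdot)/\rho$ is analytic and bounded in that half-plane and the standard Phragm\'en--Lindel\"of applies directly. The zero-obstacle you anticipate simply does not arise once you work half-plane by half-plane with $\omega$ itself rather than with a rotated version. (Incidentally, $\omega(-iz)=\prod_j(1+z/t_j)$ has its zeros at $z=-t_j$, on the \emph{negative} real axis, not the positive one.)
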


\begin{proof}
(j)$\;\!\Rightarrow$(jj) follows easily by using (\ref{mod.omega}):

\centerline{$\big|\omega (z)\big|\overset{(\ref{mod.omega})}{\leq} \omega (-\;\! i\;\! |z|)
\overset{({\rm j})}{\leq} d_1\;\! \big|\;\!\omega (|z|)^{n_1}\;\! ,z\in\mathbb{C}\;\! .$}
\medskip

\noindent On the other hand, implication (jj)$\;\!\Rightarrow$(j) is trivial:

\centerline{$\big|\omega (-it)\big|\overset{({\rm jj})}{\leq}d_2\;\! \big|\;\!\omega (|-it|)^{n_2}\!\big|
=d_2\;\! \big|\;\!\omega (|t|)^{n_2}\!\big| =d_2\;\! \big|\;\!\omega (t)^{n_2}\!\big|\;\! ,\qquad
t\in\mathbb{R}\;\! .$}
\medskip

\noindent Thus (j)$\;\!\Leftrightarrow$(jj).
\smallskip

Next, equivalence (jj)$\;\!\Leftrightarrow$(jjj) is an immediate consequence of the definition
of the convergence type by using condition (i) in Proposition \ref{conv.type}, while implication
(jw)$\;\!\Rightarrow$(jjj) is trivial. Thus it remains only to prove, for example, (jj)$\;\!\Rightarrow$(jw).
\smallskip

For let us assume that (jj) is satisfied and $f$ is an arbitrary entire function of exponential
type $0\;\!$, satisfying (\ref{udo-cond}).

Denoting, for convenience,
\medskip

\centerline{$\rho (z):=d_0\cdot\omega (z)^{n_0}\;\! ,\qquad z\in\mathbb{C}\;\! ,$}
\medskip

\noindent $\rho$ is an entire function of exponential type $0\;\!$, which has no zeros in the
open lower half-plane. Therefore, using the terminology of \cite{L}, Chapter VII, \S 4, $\rho$
is an {\it entire function of class} $P$. Since $f(i\,\cdot\;\! )$ is an entire function of exponential
type $0$ and, by (\ref{udo-cond}),
\medskip

\centerline{$|f(it)|\leq |\rho (t)|\;\! ,\qquad t\in\mathbb{R}\;\! ,$}
\medskip

\noindent applying  \cite{L}, Chapter IX, \S 4, Lemma 1, we obtain:
\medskip

\centerline{$\big| f(iz)\big|\leq \big| \rho (z)\big|\text{ and }\big| f(i\;\!\overline{z})\big|\leq
\big| \rho (z)\big|\text{ for all }z\in\mathbb{C}\text{ with }{\rm Im}z\leq 0\;\! ,$}
\medskip

\noindent that is
\begin{equation}\label{first}
\big| f(iz)\big|\leq
\begin{cases}
\; d_0\;\! \big|\;\! \omega (z)^{n_0}\big| &\!\!\! \text{for }\, z\in\mathbb{C}\;\! ,{\rm Im}z\leq 0 \\
\; d_0\;\! \big|\;\! \omega (\overline{z})^{n_0}\big| &\!\!\! \text{for }\, z\in\mathbb{C}\;\! ,{\rm Im}z\geq 0
\end{cases}
\end{equation}
On the other hand, (jj) yields for every $z\in\mathbb{C}$
\begin{equation}\label{second}
\big|\;\! \omega (z)\big|\leq d_2\;\! \big|\;\!\omega (|z|)^{n_2}\!\big|\text{ and }\big|\;\!
\omega (\overline{z})\big|\leq d_2\;\! \big|\;\!\omega (|\overline{z}|)^{n_2}\!\big| =
d_2\;\! \big|\;\!\omega (|z|)^{n_2}\!\big|\;\! .
\end{equation}
Now, by (\ref{first}) and (\ref{second}) we deduce:
\medskip

\centerline{$\big| f(iz)\big|\leq d_0\;\! (d_2)^{n_0} \big|\;\!\omega (|z|)^{n_2\cdot n_0}\big|\;\! ,\qquad
z\in\mathbb{C}\;\! .$}
\medskip

Consequently condition (i) in Proposition \ref{conv.type} holds true with $n_1=n_2\cdot n_0$ and
$d_1=d_0\;\! (d_2)^{n_0}\;\!$, and we conclude that the $\omega$-ultradifferential operator with
constant coefficients $f(D)$ is of convergence type.

\end{proof}

Following \cite{C-Z1}, Definition XI, we will say that $\omega$ satisfies the {\it strong non-quasianalyticity
condition} whenever it fulfills the equivalent conditions  in Corollary \ref{strong-n.q.}.

\begin{remark}\label{char.strong-n.q.}
{\it If $\displaystyle \omega (z)=\prod\limits_{j=1}^{\infty}\Big( 1+\frac{iz}{t_j}\Big)\;\! ,z\in\mathbb{C}\;\! ,$
where

\centerline{$\displaystyle 0<t_1\leq t_2\leq t_3\leq\;\! ...\;\! \leq +\infty\;\! ,\quad t_1<+\infty\;\! ,\quad
\sum\limits_{j=1}^{\infty}\frac 1{t_j}<+\infty\;\! ,$}
\smallskip

\noindent then, in order that $\omega$ satisfy the strong non-quasianalyticity condition, a necessary
condition is}
\smallskip

\centerline{$\displaystyle \sum\limits_{j=1}^{\infty}\frac{\ln j}{t_j}<+\infty$}
\smallskip

\noindent ({\it see \cite{C-Z1}, Corollary 1.9}), {\it while a sufficient condition is the existence of a constant
$c>0$ such that}
\smallskip

\centerline{$\displaystyle \sum\limits_{j=k}^{\infty}\frac 1{t_j}\leq c\;\! \frac k{t_k}\;\! ,\qquad k\geq 1$}
\smallskip

\noindent ({\it see \cite{K}, Proposition 4.6 or \cite{C-Z1}, comments after Proposition 5.15}). {\it If we
assume also
\smallskip

\centerline{$\displaystyle 0<t_1\leq \frac{t_2}2\leq \frac{t_3}3\leq\;\! ...\;\! ,$}
\medskip

\noindent then $\omega$ satisfies the strong non-quasianalyticity condition if and only if there exists
a constant $c>0$ such that}
\medskip

\centerline{$\displaystyle \sum\limits_{j=k}^{\infty}\frac 1{t_j}\leq c\;\! \frac k{t_k}\Big( 1+
\ln \frac{t_k}{(t_1\;\! ...\;\! t_k)^{1/k}}\Big)\;\! ,\qquad k\geq 1$}
\smallskip

\noindent ({\it see \cite{C-Z1}, Proposition 5.15}).
\end{remark}

Central issue in the theory of $\omega$-ultradifferential operators with constant coefficients
$f(D) : \mathcal{D}_\omega{\!\! '} \longrightarrow \mathcal{D}_\omega{\!\! '}$ is the characterization
of its surjectivity, that is of the existence of a solution $X\in \mathcal{D}_\omega{\!\! '}$ of the
equation $f(D)X=F$ for each $F\in \mathcal{D}_\omega{\!\! '}\;\!$, in terms of $f$.
A surjectivity criterion was proved by I. Cior\u{a}nescu in \cite{Ci}, Proposition 2.4 and Theorem 3.4$\;\! :$

\begin{proposition}\label{criterion}
For $f$ an entire function of exponential type $0$ such that $($\ref{udo-cond}$)$ holds true for
some $n_0\geq 1$ and $d_0>0\;\!$., the following statements are equivalent$\;\! :$
\begin{itemize}
\item[(i)] There exists some $E\in \mathcal{D}_\omega{\!\! '}$ such that $f(D)E=\delta_0\;\!$.
\item[(ii)] $f(D) : \mathcal{D}_\omega{\!\! '} \longrightarrow \mathcal{D}_\omega{\!\! '}$ is surjective,
that is $f(D)\mathcal{D}_\omega{\!\! '}=\mathcal{D}_\omega{\!\! '}\;\!$.
\item[(iii)] there are constants $c\;\! ,c'>0$ such that
\begin{equation*}
\sup\limits_{\substack{s\in\mathbb{R} \\ |s-t|\leq c \ln |\omega (t)|+c'}} \ln |f(s)|\geq -\;\! c \ln |\omega (t)|
-c'\;\! ,\qquad t\in\mathbb{R}\;\! .
\end{equation*}
\end{itemize}
\hspace{12 cm}$\square$
\end{proposition}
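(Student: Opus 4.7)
The plan is to prove the cycle (ii) $\Rightarrow$ (i) $\Rightarrow$ (iii) $\Rightarrow$ (ii), of which the first implication is immediate: since $\delta_0\in\mathcal{D}_\omega{\!\!\;\! '}$, any surjective $f(D)$ will admit a fundamental solution.

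For (i) $\Rightarrow$ (ii), I would exploit that $f(D)$ commutes with translations. Given $F\in\mathcal{D}_\omega{\!\! '}$, pick a partition of unity $\{\chi_k\}\subset \mathcal{D}_\omega$ subordinated to a locally finite covering of $\mathbb{R}$ by bounded open intervals, and write $F=\sum_k \chi_k F$, where each $\chi_k F\in\mathcal{E}_\omega{\!\! '}$ is of compact support. Translation-invariance of $f(D)$ together with $f(D)E=\delta_0$ yields $f(D)(E*\chi_k F)=\chi_k F$. The task is then to show that the formal sum $X=\sum_k E*\chi_k F$ converges in $\mathcal{D}_\omega{\!\! '}$, using the support localization (the summands contributing to any given compact set are finitely many, up to a tail that one controls by the decay estimates on $E$ inherited from its being an $\omega$-ultradistribution). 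Once convergence is established, $f(D)X=F$ follows term by term.

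For (iii) $\Rightarrow$ (i), I would construct $E$ explicitly via an inverse Fourier transform. The minimum modulus condition guarantees that for every $t\in\mathbb{R}$ one can find $s_t$ with $|s_t-t|\leq c\ln|\omega(t)|+c'$ on which $|f(s_t)|\geq |\omega(t)|^{-c}e^{-c'}$. Using this, replace the real integration line by a piecewise real contour $\Gamma$ that, near each dangerous point, detours to a point where $|f|$ is controlled below. The resulting candidate $E$, defined by $\langle E,\varphi\rangle=\frac1{2\pi}\int_\Gamma \widehat\varphi(-\zeta)/f(i\zeta)\,d\zeta$ for $\varphi\in\mathcal{D}_\omega$, is then to be shown to define a continuous linear functional on $\mathcal{D}_\omega$: this reduces to checking that $\widehat\varphi$, which decays faster than any $|\omega|^{-n}$ on $\mathbb{R}$, still decays sufficiently on the deformed contour to absorb the polynomial-in-$|\omega|$ growth of $1/f$ coming from the minimum modulus bound. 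A standard Paley--Wiener argument for $\omega$-ultradistributions then yields $f(D)E=\delta_0$.

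The main obstacle is (ii) $\Rightarrow$ (iii). Here I would apply open mapping / closed range theorems: surjectivity of the continuous operator $f(D):\mathcal{D}_\omega{\!\! '}\to\mathcal{D}_\omega{\!\! '}$ is equivalent, by duality, to the adjoint operator on $\mathcal{E}_\omega$ (multiplication by $f(i\cdot)$ on the Fourier side) being injective with closed range, which gives \emph{a priori} inequalities of the form $p_{\omega,n}(\varphi)\leq C\, p_{\omega,m}(f(D)\varphi)$. Testing these inequalities against carefully chosen members of $\mathcal{D}_\omega$ built from modulated cut-off functions $\chi\cdot e^{is\,\cdot}$ concentrated in small intervals whose length must be allowed to grow like $\ln|\omega(t)|$ (to stay inside $\mathcal{D}_\omega$ with controlled seminorms), and then optimizing over $s$ near $t$, should produce the desired lower bound $\sup_{|s-t|\leq c\ln|\omega(t)|+c'}\ln|f(s)|\geq -c\ln|\omega(t)|-c'$. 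The delicate point is matching the size of the support of the cut-off to the allowed shift window dictated by $\omega$, using the strong non-quasianalyticity features of $\omega$ recalled in Remark~\ref{char.strong-n.q.}.
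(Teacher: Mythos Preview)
The paper does not give its own proof of this proposition: it is quoted verbatim from Cior\u{a}nescu \cite{Ci}, Proposition~2.4 and Theorem~3.4, and the statement is closed with a $\square$ immediately after the enunciation. There is therefore no in-paper argument to compare your sketch against.

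On your outline itself, a few remarks. First, the announced cycle $(\mathrm{ii})\Rightarrow(\mathrm{i})\Rightarrow(\mathrm{iii})\Rightarrow(\mathrm{ii})$ does not match what you then write: your second paragraph argues $(\mathrm{i})\Rightarrow(\mathrm{ii})$, not $(\mathrm{i})\Rightarrow(\mathrm{iii})$. The four implications you actually address, $(\mathrm{ii})\Rightarrow(\mathrm{i})$, $(\mathrm{i})\Rightarrow(\mathrm{ii})$, $(\mathrm{iii})\Rightarrow(\mathrm{i})$, $(\mathrm{ii})\Rightarrow(\mathrm{iii})$, still cover everything, but the exposition should be tidied. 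Second, in your $(\mathrm{iii})\Rightarrow(\mathrm{i})$ the phrase ``piecewise real contour that detours to a point where $|f|$ is controlled'' is problematic: condition~(iii) only supplies \emph{real} points $s_t$ near $t$, so any path built from them remains in $\mathbb{R}$ and there is no genuine contour deformation; the construction of $E$ in \cite{Ci} proceeds differently. Third, in your $(\mathrm{ii})\Rightarrow(\mathrm{iii})$ you appeal to ``the strong non-quasianalyticity features of $\omega$ recalled in Remark~\ref{char.strong-n.q.}'', but Proposition~\ref{criterion} is stated for arbitrary $\omega\in\mathbf{\Omega}$ with no such hypothesis, so that appeal is not available in general. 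The overall architecture (fundamental solution by Fourier methods, surjectivity from convolution with a fundamental solution, necessity of a minimum-modulus bound via closed-range duality and testing against localized exponentials) is indeed the standard one and is what \cite{Ci} carries out, but the details you would need to fill in are exactly the substance of that reference.
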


If $f$ satisfies the equivalent conditions of Proposition \ref{criterion}, then, following \cite{Ch},
D\'efinition III.1-4, and by abuse of language, we will say that $f(D)$ is {\it invertible in}
$\mathcal{D}_\omega{\!\! '}\;\!$.
\smallskip

If $\rho\in\mathbf{\Omega}$ and

\centerline{$|\omega (t)|\leq c\;\! |\rho (t)|\;\! ,\qquad t\in\mathbb{R}$}
\medskip

\noindent for some $c>0\;\!$, then $\mathcal{D}_\rho\subset\mathcal{D}_\omega\;\!$, where the
inclusion is continuous and with dense range, Consequently also $\mathcal{D}_\omega{\!\! '}
\subset \mathcal{D}_\rho{\!\! '}\;\!$, where the inclusion is continuous and with dense range.
Any $\omega$-ultradifferential operator with constant coefficients $f(D)$ is clearly also a
$\rho$-ultradifferential operator with constant coefficients, hence we can consider the problem of
the invertibility of $f(D)$ in $\mathcal{D}_\rho{\!\! '}\;\!$. We notice that is $f(D)$ is of convergence
type as $\omega$-ultradifferential operator, then it is of convergence type also as
$\rho$-ultradifferential operator.
\smallskip

The main goal of this paper is to give an exact answer to the question: for which $\omega\in
\mathbf{\Omega}$ is every $\omega$-ultradifferential operator with constant coefficients and of convergence
type, $\rho$-invertible for some $\rho\in\mathbf{\Omega}$ with $\omega\leq c\;\!\rho\;\!$, where

\noindent $c>0$ is a constant?

A sufficient condition for this was already found in \cite{C-Z2}, Proposition 2.7, namely
\begin{equation}\label{w-str}
\int\limits_1^{+\infty}\frac{\ln |\omega (t)|}{t^2} \ln \frac t{\ln |\omega (t)|}\;\!{\rm d}t<+\infty\;\! .
\end{equation}
Let us call condition (\ref{w-str}) the {\it mild strong non-quasianalyticity condition}.
This denomination is justified by the fact that (\ref{w-str}) is implied by the strong non-quasianalyticity
property. More precisely, we have$\;\! :$

\begin{proposition}\label{criteria1}
For $\displaystyle \omega (z)=\prod\limits_{j=1}^{\infty}\Big( 1+\frac{iz}{t_j}\Big)\;\! ,z\in\mathbb{C}\;\! ,$
where

\centerline{$\displaystyle 0<t_1\leq t_2\leq t_3\leq\;\! ...\;\! \leq +\infty\;\! ,\quad t_1<+\infty\;\! ,\quad
\sum\limits_{j=1}^{\infty}\frac 1{t_j}<+\infty\;\! ,$}
\smallskip

\noindent we have
\begin{equation*}
\begin{split}
&\omega\text{ satisfies the strong non-quasianalyticity condition} \\
\Longrightarrow\;& \sum\limits_{j=1}^{\infty}\frac{\ln j}{t_j}<+\infty\Longleftrightarrow
\sum\limits_{j=1}^{\infty}\frac{\ln t_j}{t_j}<+\infty \\
\Longrightarrow\;& \sum\limits_{j=1}^{\infty}\frac{\displaystyle \ln \frac{t_j}j}{t_j}<+\infty
\Longleftrightarrow \begin{array}{l}\omega\text{ satisfies the mild strong} \\
\text{non-quasianalyticity condition.}\end{array}
\end{split}
\end{equation*}
\end{proposition}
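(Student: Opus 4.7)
The plan is to verify the chain link by link, relying on results from the earlier sections and from \cite{C-Z1} wherever available. Three of the four links are essentially immediate. The first link, strong non-quasianalyticity $\Rightarrow\sum_j(\ln j)/t_j<+\infty$, is \cite{C-Z1}, Corollary 1.9, so I would simply cite it. The third link, $\sum_j(\ln t_j)/t_j<+\infty\Rightarrow\sum_j\ln(t_j/j)/t_j<+\infty$, is a one-line comparison since $\ln(t_j/j)\le\ln t_j$, and both are eventually nonnegative because $\sum_j 1/t_j<+\infty$ together with $t_j$ increasing forces $t_j/j\to+\infty$ via the Cauchy tail estimate $n/t_{2n}\le\sum_{j=n+1}^{2n}1/t_j\to 0$. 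The fourth link, the equivalence of $\sum_j\ln(t_j/j)/t_j<+\infty$ with the mild strong non-quasianalyticity condition \eqref{w-str}, is the contrapositive of Theorem \ref{msnq-omega} of Section 5, read under the standing hypothesis $\sum_j 1/t_j<+\infty$.

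The real work lies in the middle equivalence $\sum_j(\ln j)/t_j<+\infty\Leftrightarrow\sum_j(\ln t_j)/t_j<+\infty$. Its direction $(\Leftarrow)$ is again immediate from $\ln t_j\ge\ln j$ eventually. For $(\Rightarrow)$ I would use Cauchy condensation: setting $u_k:=2^k/t_{2^k}$ (so $u_k\to 0$ since $t_{2^k}/2^k\to+\infty$), the monotonicity of $t_j$ together with the identity $\ln t_{2^k}=k\ln 2+\ln(1/u_k)$ produce blockwise two-sided estimates showing
\[
\sum_j \tfrac{1}{t_j}\asymp \sum_k u_k,\qquad \sum_j \tfrac{\ln j}{t_j}\asymp \sum_k k u_k,\qquad \sum_j \tfrac{\ln t_j}{t_j}\asymp \sum_k k u_k+\sum_k u_k\ln(1/u_k).
\]
Hence the implication reduces to showing $\sum_k k u_k<+\infty\Rightarrow\sum_k u_k\ln(1/u_k)<+\infty$. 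I would split the latter sum at the threshold $u_k=e^{-k}$: when $u_k\ge e^{-k}$, $\ln(1/u_k)\le k$ yields $u_k\ln(1/u_k)\le ku_k$, summable by hypothesis; when $u_k<e^{-k}$, the elementary inequality $x\ln(1/x)\le\sqrt{x}$ (valid on $(0,1]$ since $\sqrt{x}\ln(1/x)$ attains its maximum $2/e<1$ at $x=e^{-2}$) gives $u_k\ln(1/u_k)\le\sqrt{u_k}\le e^{-k/2}$, summable geometrically.

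The main obstacle is precisely the splitting estimate above; every other link is either a citation or a one-line comparison, and I would expect no further surprises beyond straightforward bookkeeping once the Cauchy-condensation formulation is in place.
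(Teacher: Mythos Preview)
Your argument is correct. The paper's own proof is entirely by citation: the first implication is Remark~\ref{char.strong-n.q.} (i.e.\ \cite{C-Z1}, Corollary~1.9), the middle equivalence is deferred to the comments after \cite{C-Z1}, Corollary~1.9, the second implication is declared trivial, and the last equivalence is \cite{C-Z2}, Lemma~2.1, (i)$\Leftrightarrow$(iv). Your treatment of three of the four links matches this (your forward reference to Theorem~\ref{msnq-omega} in place of \cite{C-Z2}, Lemma~2.1 is harmless, since that theorem is just the restatement of the cited result and does not depend on Proposition~\ref{criteria1}). The genuine difference is the middle equivalence: where the paper simply cites \cite{C-Z1}, you supply a self-contained proof via Cauchy condensation, reducing the question to $\sum_k ku_k<\infty\Rightarrow\sum_k u_k\ln(1/u_k)<\infty$ for $u_k=2^k/t_{2^k}$ and handling this by the threshold split at $u_k=e^{-k}$ together with $x\ln(1/x)\le\sqrt{x}$. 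This buys independence from the earlier reference at the cost of a few extra lines; the paper's approach is shorter but less self-contained. One small point worth making explicit in your condensation step: Cauchy's test in its standard form requires eventual monotonicity of the terms, which holds for $(\ln t_j)/t_j$ since $t_j\to+\infty$ and $x\mapsto(\ln x)/x$ decreases for $x>e$, while for $(\ln j)/t_j$ you should rely on the blockwise two-sided bounds (as you indicate) rather than invoke the test directly.
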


\begin{proof}
The first implication was already pointed out in Remark \ref{char.strong-n.q.}.

A proof of equivalence $\displaystyle \sum\limits_{j=1}^{\infty}\frac{\ln j}{t_j}<+\infty
\Longleftrightarrow\sum\limits_{j=1}^{\infty}\frac{\ln t_j}{t_j}<+\infty$ was given in the
comments after \cite{C-Z1}, Corollary 1.9 (Page 92).

The second implication is trivial, while the last equivalence is (i)$\Leftrightarrow$(iv) in
\cite{C-Z2}, Lemma 2.1.

\end{proof}

We will need the next calculus lemma$\;\! :$

\begin{lemma}\label{calculus}
Let $\alpha\;\! ,\gamma : (0\;\! ,+\infty )\longrightarrow (0\;\! ,+\infty )$ be two functions such that
\begin{equation*}
\frac{\gamma (t)}{\alpha (t)}\geq e\;\! ,\qquad t>0\;\! .
\end{equation*}
\begin{itemize}
\item[(i)] If $\alpha$ and $\gamma$ are increasing, then also the function
\begin{equation}\label{output}
(0\;\! ,+\infty )\ni t\longmapsto \alpha (t) \ln \frac{\gamma (t)}{\alpha (t)}\in (0\;\! ,+\infty )
\end{equation}
is increasing.
\item[(ii)] If $\alpha$ and $\gamma$ are twice differentiable and concave, then also the
function {\rm (\ref{output})} is twice differentiable and concave.
\end{itemize}
\end{lemma}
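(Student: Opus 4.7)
The plan is to reduce both parts to properties of the auxiliary function $\phi(x,y):=x\ln(y/x)$ defined on the domain $D:=\{(x,y)\in (0,+\infty)^2\,:\,y\geq e\;\! x\}$. The hypothesis $\gamma(t)/\alpha(t)\geq e$ is precisely $(\alpha(t),\gamma(t))\in D$, and the target function is $f(t)=\phi(\alpha(t),\gamma(t))$. Both claims will follow from appropriate monotonicity/concavity of $\phi$ combined with the assumed monotonicity/concavity of $\alpha$ and $\gamma$.

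For part (i) I would compute the first partials
\begin{equation*}
\partial_x\phi(x,y)=\ln\frac{y}{x}-1,\qquad \partial_y\phi(x,y)=\frac{x}{y}.
\end{equation*}
The condition $y\geq e\;\! x$ is equivalent to $\partial_x\phi\geq 0$, and $\partial_y\phi>0$ is automatic on $D$. Hence $\phi$ is separately nondecreasing in each argument on $D$. Given $0<t_1\leq t_2$ and using monotonicity of $\alpha,\gamma$, one then obtains
\begin{equation*}
f(t_1)=\phi(\alpha(t_1),\gamma(t_1))\leq \phi(\alpha(t_2),\gamma(t_1))\leq\phi(\alpha(t_2),\gamma(t_2))=f(t_2).
\end{equation*}

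For part (ii) I would first establish that $\phi$ is jointly concave on $D$. Since $\phi\in C^{\infty}((0,+\infty)^2)$, it suffices to check that its Hessian is negative semidefinite. A direct computation gives $\partial_{xx}\phi=-1/x$, $\partial_{yy}\phi=-x/y^2$, $\partial_{xy}\phi=1/y$, so that
\begin{equation*}
\det H=\Big(-\frac{1}{x}\Big)\!\Big(-\frac{x}{y^2}\Big)-\Big(\frac{1}{y}\Big)^{\!2}=0,\qquad \operatorname{tr}H=-\frac{1}{x}-\frac{x}{y^2}<0,
\end{equation*}
so the eigenvalues of $H$ are $0$ and a negative number; hence $H\leq 0$ and $\phi$ is concave on $D$. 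Then I would invoke the standard composition principle: a function $\phi$ which is concave \emph{and} separately nondecreasing in each argument, composed with a pair of concave functions $\alpha,\gamma$ whose values stay in $D$, produces a concave function. Indeed, for $\lambda\in[0,1]$ and $\bar t=\lambda t_1+(1-\lambda)t_2$, concavity of $\alpha,\gamma$ gives $\alpha(\bar t)\geq\lambda\alpha(t_1)+(1-\lambda)\alpha(t_2)$ and similarly for $\gamma$; separate monotonicity of $\phi$ lifts these inequalities into the first argument of $\phi$, and joint concavity of $\phi$ then yields the desired
\begin{equation*}
f(\bar t)\geq \phi\bigl(\lambda\alpha(t_1)+(1-\lambda)\alpha(t_2),\,\lambda\gamma(t_1)+(1-\lambda)\gamma(t_2)\bigr)\geq \lambda f(t_1)+(1-\lambda)f(t_2).
\end{equation*}
Twice-differentiability of $f$ is immediate from the chain rule, since $\phi$ is smooth on $(0,+\infty)^2$.

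No step here is a real obstacle. The one subtlety worth emphasizing is that the composition of a concave function with concave inner functions is \emph{not} automatically concave; the combination of joint concavity and separate monotonicity of $\phi$ is exactly what makes the argument go through, and the monotonicity assertion $\partial_x\phi\geq 0$ is in turn precisely what forces the normalization $\gamma/\alpha\geq e$ appearing in the hypothesis.
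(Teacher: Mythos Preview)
Your approach via the auxiliary function $\phi(x,y)=x\ln(y/x)$ is sound in conception, but the chain you wrote for part (i) has a genuine gap. You argue
\[
\phi(\alpha(t_1),\gamma(t_1))\leq \phi(\alpha(t_2),\gamma(t_1))\leq\phi(\alpha(t_2),\gamma(t_2)),
\]
invoking that $\phi$ is separately nondecreasing on $D$. But the intermediate point $(\alpha(t_2),\gamma(t_1))$ need not lie in $D$: the hypotheses give $\gamma(t_1)\geq e\,\alpha(t_1)$ and $\gamma(t_2)\geq e\,\alpha(t_2)$, not $\gamma(t_1)\geq e\,\alpha(t_2)$. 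When the intermediate point leaves $D$ the first inequality can actually fail; for instance $\alpha(t_1)=1$, $\gamma(t_1)=e$, $\alpha(t_2)=e$ gives $\phi(1,e)=1>0=\phi(e,e)$. The repair is simply to reverse the order: increase the second argument first, then the first. The point $(\alpha(t_1),\gamma(t_2))$ satisfies $\gamma(t_2)/\alpha(t_1)\geq\gamma(t_2)/\alpha(t_2)\geq e$, and the whole horizontal segment from there to $(\alpha(t_2),\gamma(t_2))$ stays in $D$ because the ratio $\gamma(t_2)/x$ is smallest at the right endpoint $x=\alpha(t_2)$. The same ordering care is needed in the monotonicity step of part (ii), where you were less explicit; done in this order it goes through, since the convex combination point lies in $D$ and one lands at $(\alpha(\bar t),\gamma(\bar t))\in D$.

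By contrast, the paper proceeds by direct computation. For (i) it first passes from $\gamma(t_1)$ to $\gamma(t_2)$ (this is your ``correct'' order) and then handles the $\alpha$-step by rewriting $\alpha(t_1)\ln\bigl(\gamma(t_2)/\alpha(t_1)\bigr)$ as $\gamma(t_2)\cdot u^{-1}\ln u$ with $u=\gamma(t_2)/\alpha(t_1)\geq e$ and using that $u\mapsto u^{-1}\ln u$ is decreasing on $[e,\infty)$. For (ii) it simply computes the second derivative of $t\mapsto\alpha(t)\ln\bigl(\gamma(t)/\alpha(t)\bigr)$ and checks that each of the three resulting terms is nonpositive. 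Your route has the virtue of isolating cleanly why the normalization $\gamma/\alpha\geq e$ is exactly what is needed (it is the condition $\partial_x\phi\geq 0$), and the joint concavity of $\phi$ is a pleasant structural fact; the paper's route is shorter and sidesteps the domain issue entirely.
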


\begin{proof}
{\bf For (i).} Assume that $\alpha\;\! ,\gamma$ are increasing and let $0<t_1<t_2$ be arbitrary.
Then
\begin{equation*}
\alpha (t_1)\;\! \ln \frac{\gamma (t_1)}{\alpha (t_1)}\leq
\alpha (t_1)\;\! \ln \frac{\gamma (t_2)}{\alpha (t_1)}=\gamma (t_2)\Big(
\frac{\alpha (t_1)}{\gamma (t_2)} \ln \frac{\gamma (t_2)}{\alpha (t_1)}\Big)\;\! .
\end{equation*}
Since $\displaystyle \frac{\gamma (t_2)}{\alpha (t_1)}\geq \frac{\gamma (t_2)}{\alpha (t_2)}\geq e$
and $\displaystyle x\longmapsto \frac 1x \ln x$ is decreasing on $[\;\! e\;\! ,+\infty )\;\!$, we get
\begin{equation*}
\alpha (t_1)\;\! \ln \frac{\gamma (t_1)}{\alpha (t_1)}\leq\gamma (t_2)\Big(
\frac{\alpha (t_2)}{\gamma (t_2)} \ln \frac{\gamma (t_2)}{\alpha (t_2)}\Big) =
\alpha (t_2)\;\! \ln \frac{\gamma (t_2)}{\alpha (t_2)}\;\! .
\end{equation*}

{\bf For (ii).} Assume that $\alpha\;\! ,\gamma$ are twice differentiable and concave, hence
$\alpha'' ,\gamma''\leq 0\;\!$. Function (\ref{output}) is clearly twice differentiable, its
first derivative at $t>0$ is
\begin{equation*}
\alpha'(t)\ln \frac{\gamma (t)}{\alpha (t)}+\frac{\gamma'(t)\alpha (t)-\alpha'(t)\gamma (t)}{\gamma (t)}\;\! ,
\end{equation*}
while its second derivative at $t>0$ is
\begin{equation*}
\alpha''(t)\Big( \ln \frac{\gamma (t)}{\alpha (t)}-1\Big) +\gamma''(t)\frac{\alpha (t)}{\gamma (t)}
-\frac{\;\!\big( \gamma'(t)\alpha (t)-\alpha'(t)\gamma (t)\big)^2}{\alpha (t) \gamma (t)^2}\;\! .
\end{equation*}
Since $\alpha''(t)\;\!\gamma''(t)\leq 0$ and $\displaystyle ln \frac{\gamma (t)}{\alpha (t)}-1
\geq \ln e-1=0\;\!$, the second derivative is $\leq 0$ at all $t>0\;\!$.

\end{proof}

The next theorem is a slightly extended version of  \cite{C-Z2}, Theorem 2.2. For its proof
we adapted the proof of \cite{C-Z2}, Theorem 2.2.

\begin{theorem}\label{min}
Let us assume that $\omega\in\mathbf{\Omega}$ satisfies the mild strong non-

\noindent quasianalyticity condition. Then there exists some $\rho\in\mathbf{\Omega}$ satisfying
\begin{equation}\label{major}
|\omega (t)|\leq c_0\;\! |\rho (t)|\;\! ,\qquad t\in\mathbb{R}
\end{equation}
with $c_0>0$ a constant, such that$\;\! :$
\smallskip

If $f$ is an entire function and
\begin{equation}\label{min-1}
|f(z)|\leq d_0\;\!\big|\;\!\omega (|z|)^{n_0}\!\big|\;\! ,\qquad z\in\mathbb{C}
\end{equation}
for some integer $n_0\geq 1$ and $d_0>0\;\!$, then there are constants $c\;\! ,c'>0$
such that
\begin{equation}\label{min-2}
\sup\limits_{\substack{s\in\mathbb{R} \\ |s-t|\leq c \ln |\rho (t)|+c'}} \ln |f(s)|\geq -\;\! c \ln |\rho (t)|
-c'\;\! ,\qquad t\in\mathbb{R}\;\! .
\end{equation}

Moreover, if $\omega\in\mathbf{\Omega}_0\;\!$, then we can choose $\rho\in\mathbf{\Omega}_0\;\!$.
\end{theorem}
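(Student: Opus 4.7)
The plan is to adapt the proof of \cite{C-Z2}, Theorem 2.2, while carefully tracking the regularity of the constructed $\rho$ so that $\omega \in \mathbf{\Omega}_0$ forces $\rho \in \mathbf{\Omega}_0$.

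First, I apply a Cartan-type minimum modulus theorem to $f$. Since $f$ is entire of exponential type zero and majorized by $d_0|\omega(|z|)^{n_0}|$, standard arguments yield, for any $h > 0$, a family of exceptional disks $B(z_k, r_k)$ with $\sum_k r_k \leq h$, outside which
\begin{equation*}
\ln|f(z)| \geq -C_1 n_0 \ln|\omega(|z|)| - C_2(h).
\end{equation*}
Intersecting these disks with $\mathbb{R}$ produces an exceptional subset of the real line of total length $\leq 2h$, and on its complement the displayed lower bound persists with $|z|$ replaced by $|s|$.

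Next, I convert this disk-exclusion bound into a localized lower bound. Define $\beta(t) := C n_0 \ln|\omega(t)| \ln\bigl(t / \ln|\omega(t)|\bigr)$ for $t$ large. The mild strong non-quasianalyticity hypothesis \eqref{w-str} is precisely the statement that $\int_1^{+\infty} \beta(t)/t^2\,dt < +\infty$. A standard covering argument then shows that this integrability forces every interval $[t - \beta(t), t + \beta(t)]$ to meet the complement of the exceptional set, yielding
\begin{equation*}
\sup_{|s-t| \leq \beta(t)} \ln|f(s)| \geq -C_3 \ln|\omega(t)| - C_4, \qquad t > 0.
\end{equation*}

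The third step is the construction of $\rho$. Lemma \ref{calculus}(ii), applied with $\alpha(t) = \ln|\omega(t)|$ (concave as a sum $\sum_j \tfrac{1}{2} \ln(1+t^2/t_j^2)$) and $\gamma(t) = t$, shows that $\beta$ is concave, and Lemma \ref{calculus}(i) shows it is increasing. Lemma \ref{concave-explicite} will then manufacture an entire function $\rho(z) = \prod_{j=1}^{\infty}(1 + iz/s_j) \in \mathbf{\Omega}$ with $\ln|\rho(t)|$ comparable to $\beta(t) + \ln|\omega(t)|$ up to affine constants; this simultaneously secures \eqref{major} and turns \eqref{min-2} into a consequence of the previous lower bound, after absorbing the passage from $\beta(t)$ to $c\ln|\rho(t)| + c'$.

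The main obstacle is the \emph{moreover} statement: I must arrange the zero sequence $(s_j)$ of $\rho$ so that $s_1 \leq s_2/2 \leq s_3/3 \leq \cdots$, the defining condition of $\mathbf{\Omega}_0$. This is exactly where the concavity — not merely monotonicity — of $\beta$ provided by Lemma \ref{calculus}(ii) becomes essential, because the $\mathbf{\Omega}_0$-version of Lemma \ref{concave-explicite} requires a concave majorant in order to output a sequence with the required quasi-convexity of the indices $s_j/j$. The delicate point will be verifying that this concavity survives the smoothing and affine perturbations needed to pass from $\beta$ to the actual majorant of $\ln|\rho|$; once this dovetailing with the machinery of Section 4 is checked, the rest of the proof proceeds as in \cite{C-Z2}.
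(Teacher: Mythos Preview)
Your overall strategy is close to the paper's, but there is a genuine gap in the handling of the ``moreover'' clause. You assert that $\alpha(t)=\ln|\omega(t)|=\tfrac12\sum_j\ln(1+t^2/t_j^2)$ is concave, and then feed this into Lemma~\ref{calculus}(ii) to conclude that $\beta$ is concave. But $\ln|\omega(t)|$ is \emph{not} concave on $(0,\infty)$: each summand $\tfrac12\ln(1+t^2/t_j^2)$ has second derivative $(t_j^2-t^2)/(t_j^2+t^2)^2$, which is positive for $t<t_j$, so near $0$ the function is strictly convex. Thus Lemma~\ref{calculus}(ii) does not apply with this choice of $\alpha$, and your route to concavity of $\beta$---and hence to $\rho\in\mathbf{\Omega}_0$---breaks down.

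The paper repairs exactly this point: when $\omega\in\mathbf{\Omega}_0$ it replaces $\ln|\omega(t)|$ by the explicit majorant
\[
\alpha(t)=\ln 3+2\ln\Big(1+\sum_{k\ge 1}\frac{(4t)^k}{t_1\cdots t_k}\Big),
\]
whose concavity is precisely the content of Lemma~\ref{concave-explicite} (so that lemma is used to produce a concave $\alpha$, not to manufacture $\rho$ as you suggest). With this concave $\alpha$ in hand, Lemma~\ref{calculus}(ii) legitimately yields a concave $\beta$, and then Theorem~\ref{concave} (not Lemma~\ref{concave-explicite}) supplies $\rho\in\mathbf{\Omega}_0$ majorizing $\beta$. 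Two smaller points: your $\beta$ carries the factor $n_0$ and hence depends on $f$, whereas $\rho$ must be chosen uniformly before $f$; and the ``standard covering argument'' you invoke is not a direct consequence of integrability of $\beta(t)/t^2$---the paper instead cites the specific minimum modulus estimate of \cite{C-Z2}, Corollary~1.2, whose output $\beta$ already has the required form and is independent of $f$.
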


\begin{proof}
In the case of a general $\omega\in\mathbf{\Omega}\;\!$, let $\alpha$ denote the function
\begin{equation*}
(0\;\! ,+\infty )\ni t\longmapsto \ln |\omega (t)|\in (0\;\! ,+\infty )\;\! .
\end{equation*}

In the case of $\omega\in\mathbf{\Omega}_0$ we need for $\alpha$ an infinitely differentiable,
increasing, concave function satisfying
\begin{equation}\label{msnq-majorant}
\int\limits_1^{+\infty}\!\frac{\alpha (t)}{t^2}\;\!{\rm d}t<+\infty\, ,\quad
\int\limits_1^{+\infty}\!\frac{\alpha (t)}{t^2}\ln \frac t{\alpha (t)}\;\!{\rm d}t<+\infty
\end{equation}
and $\ln |\omega (t)|\leq\alpha (t)\;\! ,t>0\;\! .$
To obtain it, let
\medskip

\centerline{$\displaystyle 0<t_1\leq \frac{t_2}2\leq \frac{t_3}3\leq\;\! ...\;\! \leq +\infty\;\! ,\quad
t_1<+\infty\;\! ,\quad \sum\limits_{j=1}^{\infty}\frac 1{t_j}<+\infty$}

\noindent be such that $\displaystyle \omega (z)=\prod\limits_{j=1}^{\infty}\Big( 1+\frac{iz}{t_j}\Big)\;\! ,\,
z\in\mathbb{C}\;\!$, and set

\centerline{$\displaystyle \alpha (t):=\ln 3 + 2 \ln\Big(1+\sum\limits_{k=1}^{\infty}
\frac{(4\;\! t)^k}{t_1\;\! ...\;\! t_k}\Big)\;\! ,\qquad t>0\;\! .$}
\smallskip

\noindent Then $(0\;\! ,+\infty )\ni t\longmapsto\alpha (t)\in (0\;\! ,+\infty )$ is infinitely differentiable,
increasing and, according to Lemma \ref{concave-explicite}, concave.
On the other hand, by \cite{C-Z1}, Lemma 1.7, we have $\ln |\omega (t)|\leq\alpha (t)\;\! ,t>0\;\! .$
Finally, since by \cite{C-Z1}, Lemma 1.7,
\begin{equation*}
\alpha (t)\leq \ln 3 + 2\big( \ln 4+2 \ln |\omega (8\;\! t)|\big) =\ln (48)+4 \ln |\omega (8\;\! t)|\;\! ,\qquad
t>0\;\! ,
\end{equation*}
and $\omega$ satisfies the mild strong non-quasianalyticity condition, Lemma \ref{msnq-permanence1}

\noindent yields (\ref{msnq-majorant}).
\smallskip

An inspection of the proof of \cite{C-Z2}, Corollary 1.2 shows that there exists a constant $\lambda >0$
such that
\begin{equation*}
\frac{1+t}{\;\!\lambda\;\! \alpha (2et)}>8\;\! e\;\! ,\qquad t>0\;\! ,
\end{equation*}
and the function
\begin{equation*}
\beta : (0\;\! ,+\infty )\ni t\longmapsto
6\;\!\alpha (2et)\ln\frac{1+t}{\;\!\lambda\;\! \alpha (2et)}+8\sum\limits_{j=1}^{\infty}
\frac{\;\!\alpha (2^jet)}{4^j}\in (0\;\! ,+\infty )\;\! ,
\end{equation*}
which is, according to Lemma \ref{calculus}, increasing and, in the case of $\omega\in\mathbf{\Omega}_0\;\!$,

\noindent also concave,
satisfies $\displaystyle \int\limits_1^{+\infty}\!\frac{\beta (t)}{t^2}\;\!{\rm d}t<+\infty$
and has the property$\;\! :$
\smallskip

If $f$ is any entire function satisfying
\begin{equation*}
\ln |f(z)|\leq d\;\! \alpha (|z|)+d'\;\! ,\qquad z\in\mathbb{C}
\end{equation*}
for some $d\;\! ,d'>0\;\!$, then there exist constants $c_1\;\! ,c_1'>0$ such that
\begin{equation*}
\sup\limits_{t\leq r\leq t+c_1 \alpha (t)}\;\! \inf\limits_{|z|=r}\ln |f(z)|\geq -c_1\;\!\beta(t)-c_1'\;\! ,
\qquad t>0\;\! .
\end{equation*}

We notice that $\beta (t)\geq 6\;\!\alpha (2et)\ln (8\;\! e)>\alpha (t)$ for all $t>0\;\!$.
\smallskip

Now, according to a result of O. I. Inozemcev and V. A. Marcenko (\cite{I-M}, Theorem 1, see
also \cite{C-Z1}, Theorem 1.6), there exist $\rho\in\mathbf{\Omega}$ and a constant $d_1>0$ such that
\begin{equation*}
\beta (t)\leq \ln |\rho (t)|+d_1\;\! ,\qquad t>0\;\! .
\end{equation*}
Moreover, in the case of $\omega\in\mathbf{\Omega}_0\;\!$, when $\beta$ is increasing and concave,
Theorem \ref{concave} ensures that $\rho$ can be chosen belonging to $\mathbf{\Omega}_0\;\!$.

Since
\smallskip

\centerline{$|\omega (t)|\leq e^{\alpha (t)}<e^{\beta (t)}\leq e^{\ln |\rho (t)|+d_1}=e^{d_1}|\rho (t)|\;\! ,
\qquad t<0\;\! ,$}
\medskip

\noindent (\ref{major}) holds true with $c_0=e^{d_1}$.

Let $f$ be an entire function satisfying (\ref{min-1}). Then
\medskip

\centerline{$\ln |f(z)|\leq n_0\;\!\ln \big|\omega (|z|)\big| +\ln d_0\leq
n_0\;\!\alpha (|z|)+\ln d_0\;\! ,\qquad z\in\mathbb{C}\;\! .$}
\medskip

\noindent By the choice of $\beta$ there exist then constants $c_1\;\! ,c_1'>0$ such that
\begin{equation*}
\sup\limits_{t\leq r\leq t+c_1\;\! \alpha (t)} \inf\limits_{|z|=r}\ln |f(z)|\geq -c_1\;\!\beta(t)-c_1'\;\! ,\qquad
t>0\;\! .
\end{equation*}
It follows for every $t\in\mathbb{R}$
\begin{equation*}
\begin{split}
\sup\limits_{\substack{s\in\mathbb{R} \\ |s-t|\leq c_1 \ln |\rho (t)|+c_1'+c_1 d_1}} \ln |f(s)|\geq\;&
\sup\limits_{\substack{s\in\mathbb{R} \\ |s-t|\leq c_1 \beta (t)|+c_1'}} \ln |f(s)| \\
\geq\;&\sup\limits_{|t|\leq r\leq |t|+c_1\;\! \alpha (t)}\;\! \inf\limits_{|z|=r}\ln |f(z)| \\
\geq\;& -c_1\;\!\beta(|t|)-c_1' \\
\geq\;& -c_1\;\! \big|\rho (|t|)\big|-c_1'-c_1 d_1 \\
=\;& -\;\! c_1\;\! |\rho (t)|-c_1'-c_1 d_1\;\! .
\end{split}
\end{equation*}
Consequently (\ref{min-2}) holds true with $c=c_1$ and $c'=c_1'+c_1 d_1\;\!$.

\end{proof}

Theorem \ref{min} implies that mild strong non-quasianalyticity of $\omega\in\mathbf{\Omega}$ is a sufficient
condition in order that every $\omega$-ultradifferential operator with constant coefficients and of
convergence type be invertible in $\mathcal{D}_\rho{\!\! '}$ for some $\rho\in\mathbf{\Omega}$ satisfying
$|\omega (t)|\leq c_0\;\! |\rho (t)|\;\! ,t\in\mathbb{R}\;\! ,$ with $c_0>0$ a constant.
This is the statement of \cite{C-Z2}, Proposition 2.7$\;\! :$

\begin{theorem}\label{surjectivity}
If $\omega\in\mathbf{\Omega}$ is satisfying the mild strong non-quasianalyticity

\noindent condition, then
there exist $\rho\in\mathbf{\Omega}$ and a constant $c_0>0$ with
\medskip

\centerline{$|\omega (t)|\leq c_0\;\! |\rho (t)|\;\! ,\qquad t\in\mathbb{R}\;\! ,$}
\medskip

\noindent such that every $\omega$-ultradifferential operator with constant coefficients and of
convergence type is invertible in $\mathcal{D}_\rho{\!\! '}\;\!$.

Moreover, if $\omega\in\mathbf{\Omega}_0\;\!$, then we can choose $\rho\in\mathbf{\Omega}_0\;\!$.
\end{theorem}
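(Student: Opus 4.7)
The plan is to combine Theorem \ref{min} (which produces a larger majorant $\rho$ together with a minimum modulus estimate) with Cior\u{a}nescu's surjectivity criterion, Proposition \ref{criterion}, applied on the $\rho$-side rather than on the $\omega$-side.

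First, I apply Theorem \ref{min} to the given $\omega$ to obtain some $\rho\in\mathbf{\Omega}$ (respectively $\rho\in\mathbf{\Omega}_0$ when $\omega\in\mathbf{\Omega}_0$) and a constant $c_0>0$ with $|\omega(t)|\leq c_0\;\! |\rho(t)|$, $t\in\mathbb{R}$, such that any entire function $f$ satisfying a polynomial majorization of the form (\ref{min-1}) automatically inherits the minimum modulus estimate (\ref{min-2}) with respect to $\ln|\rho(t)|$. The comments preceding Theorem \ref{surjectivity} already ensure that, because $|\omega(t)|\leq c_0\;\! |\rho(t)|$, any $\omega$-ultradifferential operator with constant coefficients is in particular a $\rho$-ultradifferential operator with constant coefficients, and the convergence-type property is preserved when passing from $\omega$ to $\rho$.

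Second, let $f(D)$ be an arbitrary $\omega$-ultradifferential operator with constant coefficients and of convergence type. By condition (i) of Proposition \ref{conv.type}, $f$ is an entire function of exponential type $0$ satisfying
\begin{equation*}
|f(z)|\leq d_1\;\!\big|\;\!\omega(|z|)^{n_1}\!\big|\;\! ,\qquad z\in\mathbb{C}
\end{equation*}
for some integer $n_1\geq 1$ and $d_1>0$. In particular, $f$ fulfills the hypothesis (\ref{min-1}) of Theorem \ref{min} with $n_0=n_1$, $d_0=d_1$, so Theorem \ref{min} furnishes constants $c,c'>0$ for which
\begin{equation*}
\sup\limits_{\substack{s\in\mathbb{R} \\ |s-t|\leq c\ln|\rho(t)|+c'}}\ln|f(s)|\geq -\;\! c\ln|\rho(t)|-c'\;\! ,\qquad t\in\mathbb{R}\;\! .
\end{equation*}

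Third, the estimate just obtained is precisely statement (iii) of Proposition \ref{criterion} with $\rho$ in place of $\omega$. Since $f(D)$ qualifies as a $\rho$-ultradifferential operator with constant coefficients (by the majorization $|f(it)|\leq d_1\;\! |\omega(t)|^{n_1}\leq d_1 c_0^{n_1}|\rho(t)|^{n_1}$, $t\in\mathbb{R}$), Proposition \ref{criterion} applied on the $\rho$-side yields $f(D)\mathcal{D}_\rho{\!\! '}=\mathcal{D}_\rho{\!\! '}$, that is, $f(D)$ is invertible in $\mathcal{D}_\rho{\!\! '}$. In the case $\omega\in\mathbf{\Omega}_0$, the additional conclusion of Theorem \ref{min} allows us to choose $\rho$ already in $\mathbf{\Omega}_0$, which finishes the proof.

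In this argument there is no genuine obstacle: once Theorem \ref{min} and Proposition \ref{criterion} are in hand, the proof is a direct chain of implications. All the analytic difficulty has been packed into Theorem \ref{min} (and ultimately into the minimum modulus theorem from \cite{C-Z2}); the only point to check carefully is that the entire function $f$ attached to an $\omega$-ultradifferential operator of convergence type satisfies the global two-dimensional bound $|f(z)|\leq d_1|\omega(|z|)|^{n_1}$ on all of $\mathbb{C}$, rather than merely on $i\;\!\mathbb{R}$, which is exactly what \emph{convergence type} means via Proposition \ref{conv.type}.
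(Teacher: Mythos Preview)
Your proof is correct and follows essentially the same route as the paper: choose $\rho$ via Theorem \ref{min}, use Proposition \ref{conv.type}(i) to get the global bound $|f(z)|\leq d_1|\omega(|z|)|^{n_1}$, feed this into Theorem \ref{min} to obtain (\ref{min-2}), and conclude by Proposition \ref{criterion} applied with $\rho$. Your write-up is in fact slightly more explicit than the paper's, spelling out why $f(D)$ is a $\rho$-ultradifferential operator and emphasizing that the convergence-type hypothesis is exactly what supplies the two-dimensional bound needed for Theorem \ref{min}.
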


\begin{proof}
Choose $\rho$ and $c_0$ as in Theorem \ref{min}.

According to Propositions \ref{udo-cont.coeff.} and \ref{conv.type}, every $\omega$-ultradifferential
operator with constant coefficients and of convergence type is of the form $f(D)$ with $f$ an entire
function satisfying condition (i) in Proposition \ref{conv.type}. By the choice of $\rho$ and $c_0\;\!$,
there exist constants $c\;\! ,c'>0$ such that (\ref{min-2}) is satisfied.

Applying now Proposition \ref{criterion}, we conclude that $f(D)$ is invertible in
$\mathcal{D}_\rho{\!\! '}\;\!$.

\end{proof}

The main result of this paper is the following theorem, which shows that Theorem \ref{min}
is sharp. It will be proved in Section 6.

\begin{theorem}\label{no-min}
Let us assume that $\omega\in\mathbf{\Omega}$ does not satisfy the mild strong
non-quasianalyticity condition, that is such that
\begin{equation*}
\int\limits_1^{+\infty}\frac{\ln |\omega (t)|}{t^2}\;\! \ln \frac t{\ln |\omega (t)|}\;\!{\rm d}t=+\infty\;\! .
\end{equation*}
Then there exists an entire function $f$ such that
\begin{equation}\label{dominated}
|f(z)|\leq \big|\omega (|z|)\big|^2\;\! ,\qquad z\in\mathbb{C}
\end{equation}
but for no increasing $\beta : (0\;\! ,+\infty )\longrightarrow (0\;\! ,+\infty )$ with
\smallskip

\centerline{$\displaystyle \int\limits_1^{+\infty}\frac{\beta (t)}{t^2}\;\!{\rm d}t<+\infty$}

\noindent can hold the condition
\begin{equation}\label{minimum}
\sup\limits_{\substack{s\in\mathbb{R} \\ |s-t|\leq \beta (t)}} \ln |f(s)|\geq -\;\! \beta (t)\;\! ,\qquad
t>0\;\! .
\end{equation}
\end{theorem}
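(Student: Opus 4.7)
The plan is to construct $f$ directly as a Weierstrass product of the form $f(z)=\prod_{j}(1-z^2/r_j^2)^{n_j}$, whose zeros are clustered on the real axis at carefully chosen points $r_j$ with multiplicities $n_j$. The divergence hypothesis furnishes exactly the growth budget inside $|\omega(|z|)|^2$ to pack these zeros densely enough that $f$ becomes uniformly tiny on long disjoint intervals $I_j$ around each $r_j$, and precisely this density defeats every admissible $\beta$. The argument proceeds in three stages: discretize the divergent integral into a divergent series over a geometric scale, invoke Lemma \ref{main-lemma} to produce $f$ with prescribed smallness on the $I_j$ while remaining majorized globally, and then derive a contradiction with (\ref{minimum}) by lower-bounding $\int\beta(t)/t^2\,{\rm d}t$ interval by interval.

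For the discretization step I would apply the results of Section 5: by Theorem \ref{msnq-omega} together with Propositions \ref{msnq-discrete} and \ref{msnq-permanence2}, divergence of $\int_1^{+\infty}\ln|\omega(t)|\,t^{-2}\ln\bigl(t/\ln|\omega(t)|\bigr)\,{\rm d}t$ is equivalent to divergence of a series of the form $\sum_{j}n_j\ln(r_j/n_j)/r_j$ along a geometric scale $r_j\sim 2^j$, where the integers $n_j$ track $\alpha(r_j)=\ln|\omega(r_j)|$ up to a fixed constant. This furnishes the discrete input for Lemma \ref{main-lemma}, which (following Hayman's construction) takes the sequences $(r_j,n_j)$ and produces the entire function $f$ above together with disjoint intervals $I_j=[r_j-\ell_j,r_j+\ell_j]$ and positive numbers $M_j$ such that $\ln|f(s)|\le -M_j$ for all $s\in I_j$, with $\ell_j M_j$ bounded below by a fixed positive constant times $n_j\ln(r_j/n_j)$. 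The global bound $|f(z)|\le|\omega(|z|)|^2$ then follows from a standard Jensen-type comparison of the zero-counting functions of $f$ and of $\omega^2$, the multiplicities $n_j$ having been chosen compatibly with the zero distribution of $\omega$; convergence of the product is guaranteed by $\sum n_j/r_j<+\infty$, which is just the usual non-quasianalyticity of $\omega$.

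The final step is a short contradiction argument. Suppose some increasing $\beta$ with $\int_1^{+\infty}\beta(t)/t^2\,{\rm d}t<+\infty$ satisfied (\ref{minimum}). Evaluating at $t=r_j$ and using the smallness of $f$ on $I_j$, one sees that either the search window $[r_j-\beta(r_j),r_j+\beta(r_j)]$ must extend beyond $I_j$ (forcing $\beta(r_j)\ge\ell_j$) or else $\beta(r_j)\ge M_j$; in either case, since $\beta$ is increasing, $\int_{I_j}\beta(t)/t^2\,{\rm d}t$ is bounded below by a fixed positive constant times $\ell_j M_j/r_j^2$, which is itself at least a constant times $n_j\ln(r_j/n_j)/r_j$, and summing over $j$ contradicts the hypothesis on $\beta$ via the divergent series obtained in the discretization step. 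The main obstacle is Lemma \ref{main-lemma} itself: simultaneously achieving the two competing bounds, namely extreme smallness of $f$ on each $I_j$ with the prescribed budget $\ell_j M_j$ and uniform global majorization by $|\omega(|z|)|^2$ with the specific exponent $2$, is the delicate quantitative heart of the paper and is precisely where the adaptation of Hayman's draft mentioned in the introduction is needed.
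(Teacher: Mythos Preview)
Your overall architecture matches the paper's: discretize via Section~5, define $f(z)=\prod_j(1-(z/2^j)^2)^{n_j}$, invoke Lemma~\ref{main-lemma}, and check the majorization. However, you misread what Lemma~\ref{main-lemma} actually delivers, and your contradiction step does not close as written. Lemma~\ref{main-lemma} does \emph{not} output fixed intervals $I_j=[r_j-\ell_j,r_j+\ell_j]$ together with bounds $M_j$; instead, its proof applies the generalized Schwarz lemma at the zero $2^j$ of multiplicity $n_j$ to obtain, for \emph{every} $0<\delta\le1$, the inequality $\ln|f(z)|\le 2\ln|\omega_0(2^{j+1})|+n_j\ln\delta$ on $|z-2^j|\le 2^j\delta$. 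The point is that $\delta$ is then chosen \emph{a posteriori} equal to $\beta(2^j)/2^j$, which combined with (\ref{minimum}) at $t=2^j$ gives $n_j\ln\bigl(2^j/\beta(2^j)\bigr)\le 2\ln|\omega_0(2^{j+1})|+\beta(2^j)$; dividing by $2^j$ and summing yields a finite right-hand side against the divergent series of hypothesis~(\ref{main-lemma-cond2}). This is where the ``for all admissible $\beta$'' form of the discretized divergence (Propositions~\ref{msnq-discrete} and~\ref{msnq-permanence2}) is used, not merely divergence of the single series $\sum n_j\ln(r_j/n_j)/r_j$.

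Your dichotomy ``$\beta(r_j)\ge\ell_j$ or $\beta(r_j)\ge M_j$'' is correct, but the claimed uniform lower bound $\int_{I_j}\beta(t)/t^2\,{\rm d}t\gtrsim\ell_j M_j/r_j^2$ fails in the first case: there you only get $\gtrsim\ell_j^2/r_j^2$, which need not dominate $\ell_j M_j/r_j^2$ unless $\ell_j\ge M_j$. Fixing a single $(\ell_j,M_j)$ pair in advance loses exactly the flexibility that the $\delta$-parametrized Schwarz bound provides. Two smaller points: the integers $n_j$ in the paper are the increments $n(2^j)-n(2^{j-1})$ of the zero-counting function of $\omega$, not discretized values of $\ln|\omega|$; and the majorization $|f(z)|\le|\omega(|z|)|^2$ is obtained by a direct factor-by-factor comparison (each block of $n_j$ factors $1+|z|^2/2^{2j}$ is bounded by the corresponding product over the $t_k$ with $2^{j-1}<t_k\le 2^j$), not by a Jensen-type argument.
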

\medskip

Using Theorem \ref{no-min}, we infer that also Theorem \ref{surjectivity} is sharp$\;\! :$

\begin{theorem}\label{no-surjectivity}
Let us assume that $\omega\in\mathbf{\Omega}$ does not satisfy the mild strong non-quasianalyticity
condition. Then there exists some $\omega$-ultradifferential

\noindent operator with constant coefficients and of convergence type, which is not

\noindent invertible in $\mathcal{D}_\rho{\!\! '}$ for any $\rho\in\mathbf{\Omega}$ satisfying
\medskip

\centerline{$|\omega (t)|\leq c_0\;\! |\rho (t)|\;\! ,\qquad t\in\mathbb{R}$}
\medskip

\noindent for some constant $c_0>0\;\!$.
\end{theorem}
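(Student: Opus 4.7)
The plan is to derive Theorem \ref{no-surjectivity} from Theorem \ref{no-min} by feeding the bad entire function produced there into the surjectivity criterion of Proposition \ref{criterion}. Let $f$ be the entire function supplied by Theorem \ref{no-min}, so that $|f(z)|\leq |\omega(|z|)|^2$ for every $z\in\mathbb{C}$. Since $\omega\in\mathbf{\Omega}$ is of exponential type $0$, so is the majorant $|\omega(|\cdot|)|^2$, and hence so is $f$. Restricting to the imaginary axis yields $|f(it)|\leq |\omega(t)|^2$ for $t\in\mathbb{R}$, which is (\ref{udo-cond}) with $n_0=2$ and $d_0=1$. By Proposition \ref{udo-cont.coeff.}, $f(D)$ is therefore a well-defined $\omega$-ultradifferential operator with constant coefficients, and the very bound $|f(z)|\leq |\omega(|z|)|^2$ is condition (i) of Proposition \ref{conv.type}, so $f(D)$ is of convergence type. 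This identifies the candidate operator.

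Now I would argue by contradiction. Suppose there exist $\rho\in\mathbf{\Omega}$ with $|\omega(t)|\leq c_0\;\! |\rho(t)|$ for all $t\in\mathbb{R}$ and a constant $c_0>0$ such that $f(D)$ is invertible in $\mathcal{D}_\rho{\!\! '}\;\!$. Then $|f(it)|\leq |\omega(t)|^2\leq c_0^2\;\! |\rho(t)|^2$, so (\ref{udo-cond}) holds also relative to $\rho\;\!$, and Proposition \ref{criterion} applied with $\rho$ in place of $\omega$ produces constants $c\;\! ,c'>0$ with
\begin{equation*}
\sup_{\substack{s\in\mathbb{R}\\ |s-t|\leq c\ln|\rho(t)|+c'}}\ln|f(s)|\geq -\;\! c\ln|\rho(t)|-c'\;\! ,\qquad t\in\mathbb{R}\;\! .
\end{equation*}
Define $\beta:(0\;\! ,+\infty )\longrightarrow (0\;\! ,+\infty )$ by $\beta(t):=c\ln|\rho(t)|+c'\;\!$. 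Because $\rho\in\mathbf{\Omega}$ has a finite first term in its defining product, $|\rho(t)|^2=\prod_j(1+t^2/s_j^2)$ is strictly greater than $1$ and strictly increasing on $(0\;\! ,+\infty )\;\!$, so $\beta$ is indeed positive and increasing there; the non-quasianalyticity built into $\mathbf{\Omega}$ gives $\int_1^{+\infty}\ln|\rho(t)|/t^2\;\!{\rm d}t<+\infty\;\!$, whence $\int_1^{+\infty}\beta(t)/t^2\;\!{\rm d}t<+\infty\;\!$. Restricting the displayed inequality to $t>0$ yields exactly the minimum modulus condition (\ref{minimum}) for this $\beta\;\!$, contradicting Theorem \ref{no-min}. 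Hence no such $\rho$ can exist.

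All the substantive work in this argument is delegated to Theorem \ref{no-min}; once that theorem is available, the deduction reduces to unwinding the definition of invertibility via Proposition \ref{criterion} and verifying that the $\beta$ built from $\ln|\rho|$ meets the admissibility conditions of Theorem \ref{no-min}, each of which is immediate from $\rho\in\mathbf{\Omega}\;\!$. The proof contains no real obstacle beyond Theorem \ref{no-min} itself, which is where all the analytic difficulty resides.
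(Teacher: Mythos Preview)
Your argument is correct and follows essentially the same route as the paper's own proof: take the entire function $f$ from Theorem \ref{no-min}, use Propositions \ref{udo-cont.coeff.} and \ref{conv.type} to identify $f(D)$ as an $\omega$-ultradifferential operator of convergence type, and then derive a contradiction from Proposition \ref{criterion} by observing that $\beta(t)=c\ln|\rho(t)|+c'$ is an admissible increasing function with $\int_1^{+\infty}\beta(t)/t^2\,{\rm d}t<+\infty$. Your version is slightly more explicit in checking that $f$ has exponential type $0$, that the hypothesis (\ref{udo-cond}) transfers to $\rho$, and that $\beta$ is positive and increasing, but the structure is identical.
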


\begin{proof}
Let $f$ be an entire function $f$ as in Theorem \ref{no-min}.
Then, according

\noindent to Propositions \ref{udo-cont.coeff.} and \ref{conv.type}, we can consider the
$\omega$-ultradifferential operator with constant coefficients $f(D)\;\!$, and it is of
convergence tyoe.

If it would exist some $\rho\in\mathbf{\Omega}$ satisfying
\medskip

\centerline{$|\omega (t)|\leq c_0\;\! |\rho (t)|\;\! ,\qquad t\in\mathbb{R}$}
\medskip

\noindent with $c_0>0$ a constant, such that $f(D)$ is invertible in $\mathcal{D}_\rho{\!\! '}\;\!$,
then Proposition \ref{criterion} would imply the existence of constants $c\;\! ,c'>0$ such that
\begin{equation*}
\sup\limits_{\substack{s\in\mathbb{R} \\ |s-t|\leq c \ln |\rho (t)|+c'}} \ln |f(s)|\geq -\;\! c \ln |\rho (t)|
-c'\;\! ,\qquad t\in\mathbb{R}\;\! .
\end{equation*}
But this is not possible because $\beta : (0\;\! ,+\infty )\ni t\longmapsto c \ln |\rho (t)|+c'\in
(0\;\! ,+\infty )$ would be a function with $\displaystyle \int\limits_1^{+\infty}\frac{\beta (t)}{t^2}\;\!
{\rm d}t<+\infty$ (see e.g. \cite{I-M}, Theorem 1 or \cite{C-Z1}, Theorem 1.6) such that
\begin{equation*}
\sup\limits_{\substack{s\in\mathbb{R} \\ |s-t|\leq \beta (t)}} \ln |f(s)|\geq -\;\! \beta (t)\;\! ,\qquad
t\in\mathbb{R}\;\! ,
\end{equation*}
in contradiction with the choice of $f$.

\end{proof}

\section{On the non-quasianalyticity condition}

For sake of convenience, we will say that a Lebesgue measurable function
$\alpha : (0\;\! ,+\infty )\longrightarrow (0\;\! ,+\infty )$ satisfies the {\it non-quasianalyticity condition} if
\smallskip

\centerline{$\displaystyle \int\limits_1^{+\infty}\!\frac{\alpha (t)}{t^2}\;\!{\rm d}t<+\infty\;\! .$}
\smallskip

\noindent This denomination is suggested by the classical Denjoy-Carleman Theorem
(se e.g. \cite{Man}, 4.1.III) in which non-quasianalyticity is characterized by this condition.

Examples of functions satisfying the non-quasianalyticity condition$\;\! :$

\begin{remark}\label{explicite}
{\it If $\displaystyle 0<t_1\leq t_2\leq t_3\leq\;\! ...\;\! \leq +\infty\, ,\, t_1<+\infty\;\! ,
\,\sum\limits_{j=1}^{\infty}\frac 1{t_j}<+\infty\;\! ,$

\noindent then the increasing functions
\begin{itemize}
\item[(1)] $(0\;\! ,+\infty )\ni t\longmapsto n(t)$ with $n(t)$ the number of the elements of the set
$\{ k\geq 1 ; t_k\leq t\}$ $(\;\!$the distribution function of the sequence $(t_j)_{j\geq 1}\;\! )$,
\item[(2)] $\displaystyle (0\;\! ,+\infty )\ni t\longmapsto \alpha (t):=\ln\Big(1+\sum\limits_{k=1}^{\infty}
\frac{t^k}{t_1\;\! ...\;\! t_k}\Big)\in (0\;\! ,+\infty )\, ,$
\item[(3)] $\displaystyle (0\;\! ,+\infty )\ni t\longmapsto N(t):=
\ln \max \Big( 1\;\! ,\sup\limits_{k\geq 1}\frac{t^k}{t_1\;\! t_2\;\! ...\;\! t_k}\Big)\in (0\;\! ,+\infty )\, ,$
\item[(4)] $\displaystyle (0\;\! ,+\infty )\ni t\longmapsto \ln |\omega (t)|\in (0\;\! ,+\infty )\, ,$
where $\omega\in\mathbf{\Omega}$ is defined by $\displaystyle \omega (z)=\prod\limits_{j=1}^{\infty}
\Big( 1+\frac{iz}{t_j}\Big)\;\! ,z\in\mathbb{C}\;\! ,$
\end{itemize}
satisfy the non-quasianalyticity condition.}
\smallskip

Since  $\displaystyle \int\limits_{t_1}^{t_{k+1}}\! \frac{n(t)}{t^2}\;\!{\rm d}t=
\sum\limits_{j=1}^{k}\int\limits_{t_j}^{t_{j+1}}\! \frac{n(t)}{t^2}\;\!{\rm d}t =
\sum\limits_{j=1}^{k} j\;\! \Big( \frac 1{t_j}-\frac 1{t_{j+1}}\Big)=
\Big(\sum\limits_{j=1}^{k}\frac 1{f_j}\Big) -\frac k{t_{k+1}}\;\!$,
we have $\displaystyle \int\limits_{t_1}^{\infty} \frac{n(t)}{t^2}\;\!{\rm d}t\leq \sum\limits_{j=1}^{\infty}
\frac 1{f_j}<+\infty\;\!$.
\smallskip

A proof of the non-quasianalyticity of $\ln |\omega (\;\!\cdot\;\! )|$ can be found, for example,
in the proof of implication $({\rm iii})\Rightarrow ({\rm i})$ in \cite{C-Z1}, Theorem 1.6.

The  non-quasianalyticity of $N(\;\!\cdot\;\! )$ follows from the non-quasianalyticity of
$\ln |\omega (\;\!\cdot\;\! )|$ and the clear inequality $N(t)\leq \ln |\omega (t)|\;\! ,t>0\;\!$.

Finally, the non-quasianalyticity of $\alpha$ is consequence of the inequality
\medskip

\noindent\hspace{1.67 cm}$\displaystyle 1+\sum\limits_{k=1}^{\infty}\frac{t^k}{t_1\;\! ...\;\! t_k} =
1+\sum\limits_{k=1}^{\infty}\frac 1{2^k}\;\! \frac{t^k}{(t_1/2)\;\! ...\;\! (t_k/2)}$

\noindent\hspace{4.16 cm}$\displaystyle \leq \Big(\sum\limits_{k=1}^{\infty}\frac 1{2^k}\Big)
\max \Big( 1\;\! ,\sup\limits_{k\geq 1}\frac{t^k}{(t_1/2)\;\! ...\;\! (t_k/2)}\Big)$
\medskip

\noindent and of the non-quasianalyticity of $N(\;\!\cdot\;\! )$ with $t_k$ replaced by $t_k/2\;\!$.
\end{remark}

The goal of this section is to show, how we can majorize functions of a certain regularity,
satisfying the non-quasianalyticity condition, with more regular or more explicite functions,
still satisfying the non-quasianalyticity condition.
We consider three function groups$\;\! :$
\begin{itemize}
\item Increasing functions $(0\;\! ,+\infty )\longrightarrow (0\;\! ,+\infty )\;\!$.
\item "Concave like functions" $\alpha : (0\;\! ,+\infty )\longrightarrow (0\;\! ,+\infty )\;\!$, which can be
\smallskip

\begin{itemize}
\item[1)] concave: $\alpha\big( (1-\lambda )t_1+\lambda t_2\big)\geq (1-\lambda )\alpha (t_1)+\lambda
\alpha (t_2)$ for $0\leq\lambda\leq 1$ and $t_1\;\! t_2>0\;\!$;
\item[2)] such that $\displaystyle (0\;\! ,+\infty )\ni t\longmapsto\frac{\alpha (t)}t$ is decreasing;
\item[3)] subadditive: $\alpha (t_1+t_2)\leq\alpha (t_1)+\alpha (t_2)$ for $t_1\;\! ,t_2>0\;\!$.
\end{itemize}
\smallskip

\noindent We notice that $1)\Rightarrow 2)\Rightarrow 3)\;\!$.
Indeed, if $\alpha$ is concave and $0<t_1<t_2$ are arbitrary, then we have for any $0<\varepsilon <t_1$
\begin{equation*}
\begin{split}
\hspace{8 mm}\alpha (t_1)=\;&\alpha\Big( \frac{t_2-t_1}{t_2-\varepsilon}\;\! \varepsilon +
\frac{t_1-\varepsilon}{t_2-\varepsilon}\;\! t_2\Big)\geq \frac{t_2-t_1}{t_2-\varepsilon}\;\! \alpha (\varepsilon )+
\frac{t_1-\varepsilon}{t_2-\varepsilon}\;\!\alpha (t_2) \\
>\;&\frac{t_1-\varepsilon}{t_2-\varepsilon}\;\!\alpha (t_2)\;\! .
\end{split}
\end{equation*}
Letting $\varepsilon\to 0$ we conclude that $\displaystyle \alpha (t_1)\geq\frac{t_1}{t_2}\;\!\alpha (t_2)
\Longleftrightarrow \frac{\alpha (t_1)}{t_1}\geq \frac{\alpha (t_2)}{t_2}\;\!$.
On the other hand, if $\displaystyle (0\;\! ,+\infty )\ni t\longmapsto\frac{\alpha (t)}t$ is decreasing, then we
have for all $t_1\;\! ,t_2>0\;\!$:
\begin{equation*}
\begin{split}
\alpha (t_1+t_2)=\;&t_1\;\!\frac{\alpha (t_1+t_2)}{t_1+t_2}+t_2\;\!\frac{\alpha (t_1+t_2)}{t_1+t_2} \\
\leq\;&t_1\;\!\frac{\alpha (t_1)}{t_1}+t_2\;\!\frac{\alpha (t_2)}{t_2} =\alpha (t_1)+\alpha (t_2)\;\! .
\end{split}
\end{equation*}
\item $(0\;\! ,+\infty )\ni t\longmapsto\ln |\omega (t)|$ with $\omega$ an entire function belonging to
$\mathbf{\Omega}$ or $\mathbf{\Omega}_0\;\!$.
\end{itemize}

The next lemma extends \cite{C-Z1}, Lemma 1.7$\;\! :$

\begin{lemma}\label{concave-explicite}
If
\begin{equation*}
0<t_1\leq \frac{t_2}2\leq \frac{t_3}3\leq\;\! ...\;\! \leq +\infty\;\! ,\quad t_1<+\infty\;\! ,
\end{equation*}
then the function
\begin{equation*}
\alpha : (0\;\! ,+\infty )\ni t\longmapsto \ln\Big(1+\sum\limits_{k=1}^{\infty}\frac{t^k}{t_1\;\! ...\;\! t_k}\Big)
\in (0\;\! ,+\infty )
\end{equation*}
is strictly increasing and concave. Assuming additionally that $\displaystyle \frac{t_k}k\neq\frac{t_{k+1}}{k+1}$
for at least one $k\geq 1\;\!$, $\alpha$ turns out to be even strictly concave.
\end{lemma}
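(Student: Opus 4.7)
Set $b_k := 1/(t_1 \cdots t_k)$ for $k \geq 0$ (with $b_0 := 1$) and $s_k := t_k/k$ for $k \geq 1$; the hypothesis $t_1 \leq t_2/2 \leq t_3/3 \leq \cdots$ says exactly that $s_1 \leq s_2 \leq \cdots$. Writing $f(t) := 1 + \sum_{k \geq 1} b_k\, t^k$, so that $\alpha = \ln f$, it is immediate that $f'(t) > 0$ on $(0,+\infty)$ (since $b_1 = 1/t_1 > 0$), hence $\alpha' = f'/f > 0$ and $\alpha$ is strictly increasing.

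The strategy for concavity is to exhibit $\alpha'(t)$ as the expectation of a nonincreasing function against a one-parameter family of probability measures that is increasing in $t$ in the likelihood-ratio order. The identity $(k+1)\, b_{k+1} = b_k/s_{k+1}$ transforms
\begin{equation*}
f'(t) = \sum_{k \geq 0} (k+1)\, b_{k+1}\, t^k = \sum_{k \geq 0} \frac{b_k\, t^k}{s_{k+1}}
\end{equation*}
into
\begin{equation*}
\alpha'(t) = \sum_{k \geq 0} \frac{1}{s_{k+1}}\, p_k(t), \qquad p_k(t) := \frac{b_k\, t^k}{f(t)}.
\end{equation*}
The weights $(p_k(t))_{k \geq 0}$ form a probability distribution on $\{0,1,2,\ldots\}$, so $\alpha'(t) = \mathbb{E}[1/s_{K_t+1}]$ where $K_t$ has this law. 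For $0 < t < t'$, the likelihood ratio $p_k(t')/p_k(t) = (t'/t)^k \cdot f(t)/f(t')$ is strictly increasing in $k$ on the effective support $\{k : b_k > 0\}$, so $K_{t'}$ strictly stochastically dominates $K_t$ in the monotone-likelihood-ratio sense. Because $k \mapsto 1/s_{k+1}$ is nonincreasing (monotonicity of $(s_k)$), integrating a nonincreasing function against a stochastically dominated law yields $\alpha'(t') \leq \alpha'(t)$, i.e., $\alpha$ is concave.

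For strict concavity, the extra hypothesis $t_{k_0}/k_0 \neq t_{k_0+1}/(k_0+1)$ for some $k_0$ translates into $s_{k_0} < s_{k_0+1}$, so $k \mapsto 1/s_{k+1}$ is not constant on the support of $K_t$ (which contains $\{0, 1\}$ because $t_1 < +\infty$ forces $b_0, b_1 > 0$, and in general is either all of $\{0,1,2,\ldots\}$ or an initial segment large enough to reach $k_0$ and $k_0+1$). The standard single-crossing argument---by the strict monotone likelihood ratio, the sign of $p_k(t) - p_k(t')$ flips exactly once in $k$---combined with this non-constancy forces $\alpha'(t') < \alpha'(t)$ strictly, giving strict concavity.

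The main difficulty is routine bookkeeping: extracting the strict inequality in the stochastic-dominance step when the weight $1/s_{k+1}$ is merely non-constant rather than strictly decreasing, and checking that the possibility of some $t_k = +\infty$ (which truncates $f$ to a polynomial) does not spoil the argument. Both are handled by the single-crossing observation above, and the identity $(k+1)\, b_{k+1} = b_k/s_{k+1}$ makes clear that the hypothesis on $t_k/k$ enters in exactly the right place.
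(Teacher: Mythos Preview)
Your approach is correct and genuinely different from the paper's. The paper proceeds by brute force: writing $c_j := j/t_j$ (so that $c_1 \ge c_2 \ge \cdots$) and $f(t) = \sum_{k\ge 0}\big(\prod_{j\le k} c_j\big)\,t^k/k!$, it expands $(f')^2 - f''f$ as a power series and shows that every coefficient $C_k$ is nonnegative (and that some $C_{k_0-1}>0$ under the strict hypothesis). This is done by a lengthy combinatorial pairing argument, with a separate telescoping identity needed to close the estimate.

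Your route is shorter and more conceptual. The key identity $(k+1)b_{k+1}=b_k/s_{k+1}$ gives the exact representation $\alpha'(t)=\mathbb{E}[\,1/s_{K_t+1}\,]$, and then concavity is simply monotone likelihood ratio of the family $\big(p_k(t)\big)_k$ in $t$ together with the monotonicity of $k\mapsto 1/s_{k+1}$. What you gain is transparency: one sees immediately that the hypothesis $t_k/k\uparrow$ enters precisely as monotonicity of the payoff, and the linear boundary case $t_k = k t_1$ corresponds exactly to a constant payoff. What the paper's computation gains is self-containment (no appeal to stochastic ordering) and, incidentally, the stronger pointwise statement that \emph{every} Taylor coefficient of $(f')^2 - f'' f$ is nonnegative.

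One small imprecision in your sketch: for strict concavity you say the support must reach ``$k_0$ and $k_0+1$'', but in fact reaching $k_0$ suffices. If $s_{k_0}<s_{k_0+1}$ then $s_{k_0}<\infty$, so $t_{k_0}<\infty$ and hence $b_{k_0}>0$; thus the support contains $\{0,\ldots,k_0\}$. Abel summation gives
\[
\alpha'(t)-\alpha'(t')=\sum_{k\ge 0}\Big(\tfrac{1}{s_{k+1}}-\tfrac{1}{s_{k+2}}\Big)\big(F_t(k)-F_{t'}(k)\big),
\]
and the term at $k=k_0-1$ is strictly positive: the first factor is $1/s_{k_0}-1/s_{k_0+1}>0$, and $F_t(k_0-1)>F_{t'}(k_0-1)$ because the strict MLR with nontrivial mass on both $\{0,\ldots,k_0-1\}$ and $\{k_0\}$ forces strict first-order dominance at that level. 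So strict concavity follows without needing $b_{k_0+1}>0$ (which can indeed fail when $s_{k_0+1}=+\infty$).
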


\begin{proof}
$\alpha$ is clearly strictly increasing.

For the proof of the concavity it is convenient to denote $\displaystyle c_k=\frac{t_k}k\;\! ,k\geq 1\;\! .$
Then $c_1\geq c_2\geq c_3\geq\;\! ...\;\!\geq 0\;\! ,c_1>0$ and
\smallskip

\centerline{$\displaystyle \alpha (t)=\ln\bigg( \sum\limits_{k=0}^{\infty} \Big(\prod\limits_{j=1}^kc_j\Big)\;\!
\frac{t^k}{k!}\bigg)\;\! \qquad t>0\;\! ,$}

\noindent where we agree that $\displaystyle \prod\limits_{j=1}^kc_j=1$ for $k=0\;\!$.

If $\displaystyle c_k=\frac{t_k}k=\frac{t_{k+1}}{k+1}=c_{k+1}$ for all $k\geq 1\;\!$, then
$\alpha (t)=\ln e^{c_1t}=c_1 t\;\!$, so $\alpha$ is linear, hence concave.
We will show that, assuming $c_{k}>c_{k+1}$ for some $k\geq 1\;\!$,
$\alpha$ is strictly concave, by proving that $\alpha''(t)<0$ for all $t>0\;\!$.
Let $k_0$ denote the least integer $k\geq 1$ for which $c_{k}>c_{k+1}\;\!$.

Denoting $\displaystyle f(t)=\sum\limits_{k=0}^{\infty} \Big(\prod\limits_{j=1}^kc_j\Big)\;\! \frac{t^k}{k!}\;\!$,
we have
\begin{equation*}
\alpha''(t)=\big(\ln f(t)\big)''=\Big(\frac{f'(t)}{f(t)}\Big)'=\frac{f''(t) f(t)-f'(t)^2}{f(t)^2}\;\!.
\end{equation*}
Therefore out task is to prove that $f(t)^2-f''(t) f(t)>0$ for all $t>0\;\!$.

Computation yields $\displaystyle f'(t)=\sum\limits_{k=0}^{\infty}
\Big(\prod\limits_{j=1}^{k+1}c_j\Big)\;\! \frac{t^k}{k!}\;\!$,
$\displaystyle f''(t)=\sum\limits_{k=0}^{\infty} \Big(\prod\limits_{j=1}^{k+2}c_j\Big)\;\! \frac{t^k}{k!}$
and
\begin{equation*}
\begin{split}
&f(t)^2-f''(t) f(t) \\
=\;&\sum\limits_{k=0}^{\infty}\bigg(
\sum\limits_{\substack{p,q\geq 0 \\ p+q=k}}\frac 1{p! q!}
\Big(\prod\limits_{j=1}^{p+1}c_j\Big) \Big(\prod\limits_{j=1}^{q+1}c_j\Big)
-\!\!\sum\limits_{\substack{p,q\geq 0 \\ p+q=k}}\frac 1{p! q!}
\Big(\prod\limits_{j=1}^{p+2}c_j\Big)\Big( \prod\limits_{j=1}^qc_j\Big)\!\bigg) t^k.
\end{split}
\end{equation*}
Hence the proof will be done once we show that
\begin{equation*}
C_k:=\sum\limits_{\substack{p,q\geq 0 \\ p+q=k}}\frac 1{p! q!}
\Big(\prod\limits_{j=1}^{p+1}c_j\Big) \Big(\prod\limits_{j=1}^{q+1}c_j\Big)
-\!\!\sum\limits_{\substack{p,q\geq 0 \\ p+q=k}}\frac 1{p! q!}
\Big(\prod\limits_{j=1}^{p+2}c_j\Big)\Big( \prod\limits_{j=1}^qc_j\Big)
\geq 0
\end{equation*}
for all $k\geq 0\;\!$, and $C_{k_0-1}> 0\;\!$.
\smallskip

Since $C_0=c_1 c_1-c_1 c_2=c_1(c_1-c_2)\geq 0\;\!$, where the inequality is strict if
$k_0=1\;\!$, it remains that we prove that $C_k\geq 0$ for all $k\geq 1$ and
$C_{k_0-1}>0$ if $k_0\geq 2\;\!$..
\smallskip

For each $k\geq 1\;\!$, using
\medskip

\noindent\hspace{3 mm}$\displaystyle
\sum\limits_{\substack{p,q\geq 0 \\ p+q=k}}\frac 1{p! q!}
\Big(\prod\limits_{j=1}^{p+1}c_j\Big) \Big(\prod\limits_{j=1}^{q+1}c_j\Big)
=\frac 1{k!}\;\! c_1 \prod\limits_{j=1}^{k+1}c_j +
\sum\limits_{\substack{p\geq 1,q\geq 0 \\ p+q=k}}\frac 1{p! q!}
\Big(\prod\limits_{j=1}^{p+1}c_j\Big) \Big(\prod\limits_{j=1}^{q+1}c_j\Big)$
\medskip

\noindent and

\noindent\hspace{7.1 mm}$\displaystyle
\sum\limits_{\substack{p,q\geq 0 \\ p+q=k}}\frac 1{p! q!}
\Big(\prod\limits_{j=1}^{p+2}c_j\Big)\Big( \prod\limits_{j=1}^qc_j\Big)
=\frac 1{k!}\;\! \prod\limits_{j=1}^{k+2}c_j +
\sum\limits_{\substack{p\geq 0,q\geq 1 \\ p+q=k}}\frac 1{p! q!}
\Big(\prod\limits_{j=1}^{p+2}c_j\Big)\Big( \prod\limits_{j=1}^qc_j\Big)$

\noindent\hspace{2.3 mm}$\displaystyle =
\frac 1{k!}\;\! \prod\limits_{j=1}^{k+2}c_j +
\sum\limits_{\substack{p'\geq 1,q'\geq 0 \\ p'+q'=k}}\frac 1{(p'-1)! (q'+1)!}
\Big(\prod\limits_{j=1}^{p'+1}c_j\Big)\Big( \prod\limits_{j=1}^{q'+1}c_j\Big)\;\! ,$
\smallskip

\noindent we obtain
\begin{equation}\label{C_k}
C_k=\frac 1{k!}\;\! \big( c_1-c_{k+2}\big) \prod\limits_{j=1}^{k+1}c_j +S_k\;\! ,
\end{equation}
where
\begin{equation}\label{initial-S_k}
S_k=\!\sum\limits_{\substack{p\geq 1,q\geq 0 \\ p+q=k}}\!
\Big(\frac 1{p! q!}-\frac 1{(p-1)! (q+1)!}\Big)
\Big(\prod\limits_{j=1}^{p+1}c_j\Big) \Big(\prod\limits_{j=1}^{q+1}c_j\Big)\;\! .
\end{equation}

\noindent Therefore it is enough to show that $S_k\geq 0$ for all $k\geq 1\;\!$. Indeed,
then (\ref{C_k}) yields $C_k\geq 0$ for all $k\geq 1\;\!$.
Moreover, if $k_0\geq 2$ and so $k_0-1\geq 1\;\!$, then

\noindent (\ref{C_k}) and $c_1-c_{k_0+1}\geq c_{k_0}-c_{k_0+1}>0$ yield also
\smallskip

\centerline{$\displaystyle C_{k_0-1}=\frac 1{(k_0-1)!}\;\! \big( c_1-c_{k_0+1}\big)
\prod\limits_{j=1}^{k_0}c_j +S_{k_0-1}> S_{k_0-1}\geq 0\;\! .$}
\smallskip

Direct computation shows that $S_k\geq 0$ for $1\leq k\leq 5\;\!$:
\smallskip

\noindent\hspace{9 mm}$\displaystyle S_1=0\;\! ,\quad S_2=\frac 12\;\! c_1^2c_2(c_1-c_3)
\geq 0\;\! ,\quad S_3=\frac 23\;\! c_1^2c_2 c_3(c_2-c_4)\geq 0\;\! ,$

\noindent\hspace{9 mm}$\displaystyle S_4=\frac 18\;\! c_1^2c_2 c_3 c_4 (c_2-c_5)+
\frac 1{12}\;\! c_1^2c_2^2c_3 (c_3-c_4)\geq 0\;\! ,$

\noindent\hspace{9 mm}$\displaystyle S_5=\frac 1{30}\;\! c_1^2c_2 c_3 c_4 c_5 (c_2-c_6) +
\frac 1{24}\;\! c_1^2c_2^2c_3c_4 (c_3-c_5)\geq 0\;\! .$
\smallskip

\noindent It remains to show that $S_k\geq 0$ for all $k\geq 6\;\!$.

Let in the sequel the integer $k\geq 6$ be arbitrary.
For $\displaystyle p=\frac{k+1}2$ (what can happen only for odd $k$) we have
\smallskip

\centerline{$\displaystyle \frac 1{p! q!}-\frac 1{(p-1)! (q+1)!}=0\;\! ,$}

\noindent hence
\smallskip

\noindent\hspace{1.16 cm}$\displaystyle
S_k=\sum\limits_{\substack{1\leq p\leq k/2 \\ p+q=k}}\!
\Big(\frac 1{p! q!}-\frac 1{(p-1)! (q+1)!}\Big)
\Big(\prod\limits_{j=1}^{p+1}c_j\Big) \Big(\prod\limits_{j=1}^{q+1}c_j\Big)$

\noindent\hspace{2.19 cm}$\displaystyle
+\sum\limits_{\substack{k/2 +1\leq p\leq k \\ p+q=k}}\!
\Big(\frac 1{p! q!}-\frac 1{(p-1)! (q+1)!}\Big)
\Big(\prod\limits_{j=1}^{p+1}c_j\Big) \Big(\prod\limits_{j=1}^{q+1}c_j\Big)$

\noindent\hspace{1.7 cm}$\displaystyle
=\sum\limits_{\substack{1\leq p\leq k/2 \\ p+q=k}}\!
\Big(\frac 1{p! q!}-\frac 1{(p-1)! (q+1)!}\Big)
\Big(\prod\limits_{j=1}^{p+1}c_j\Big) \Big(\prod\limits_{j=1}^{q+1}c_j\Big)$

\noindent\hspace{2.19 cm}$\displaystyle
+\sum\limits_{\substack{0\leq q\leq k/2 -1 \\ p+q=k}}\!
\Big(\frac 1{p! q!}-\frac 1{(p-1)! (q+1)!}\Big)
\Big(\prod\limits_{j=1}^{p+1}c_j\Big) \Big(\prod\limits_{j=1}^{q+1}c_j\Big)$

\noindent\hspace{1.7 cm}$\displaystyle
=\sum\limits_{\substack{1\leq p\leq k/2 \\ p+q=k}}\!
\Big(\frac 1{p! q!}-\frac 1{(p-1)! (q+1)!}\Big)
\Big(\prod\limits_{j=1}^{p+1}c_j\Big) \Big(\prod\limits_{j=1}^{q+1}c_j\Big)$

\noindent\hspace{2.19 cm}$\displaystyle
+\sum\limits_{\substack{0\leq p\leq k/2 -1 \\ p+q=k}}\!
\Big(\frac 1{q! p!}-\frac 1{(q-1)! (p+1)!}\Big)
\Big(\prod\limits_{j=1}^{q+1}c_j\Big) \Big(\prod\limits_{j=1}^{p+1}c_j\Big);\! .$

Denoting by $p_0$ the unique integer for which $\displaystyle \frac{k-1}2\leq p_0\leq \frac k2\;\!$,
it follows$\;\! :$

\noindent\hspace{0.8 mm}$\displaystyle
S_k=\sum\limits_{\substack{1\leq p\leq p_0-1 \\ p+q=k}}\!
\Big(\frac 1{p! q!}-\frac 1{(p-1)! (q+1)!}\Big)
\Big(\prod\limits_{j=1}^{p+1}c_j\Big) \Big(\prod\limits_{j=1}^{q+1}c_j\Big)$

\noindent\hspace{6.8 mm}$\displaystyle
+\;\Big(\frac 1{p_0! (k-p_0)!}-\frac 1{(p_0-1)! (k-p_0+1)!}\Big)
\Big(\prod\limits_{j=1}^{p_0+1}c_j\Big) \Big(\prod\limits_{j=1}^{k-p_0+1}c_j\Big)$

\noindent\hspace{6.8 mm}$\displaystyle
+\sum\limits_{\substack{1\leq p\leq p_0 -1 \\ p+q=k}}\!
\Big(\frac 1{p! q!}-\frac 1{(p+1)! (q-1)!}\Big)
\Big(\prod\limits_{j=1}^{p+1}c_j\Big) \Big(\prod\limits_{j=1}^{q+1}c_j\Big)$

\noindent\hspace{6.8 mm}$\displaystyle
+\;\Big(\frac 1{k!}-\frac 1{(k-1)!}\Big)\;\! c_1 \Big(\prod\limits_{j=1}^{k+1}c_j\Big)$

\noindent\hspace{6 mm}$\displaystyle
=\Big(\frac 1{p_0! (k-p_0)!}-\frac 1{(p_0-1)! (k-p_0+1)!}\Big)
\Big(\prod\limits_{j=1}^{p_0+1}c_j\Big) \Big(\prod\limits_{j=1}^{k-p_0+1}c_j\Big)$

\noindent\hspace{6.8 mm}$\displaystyle
+\;\Big(\frac 1{k!}-\frac 1{(k-1)!}\Big)\;\! c_1 \Big(\prod\limits_{j=1}^{k+1}c_j\Big)$

\noindent\hspace{6.8 mm}$\displaystyle
+\sum\limits_{\substack{1\leq p\leq p_0 -1 \\ p+q=k}}\!
\Big(\frac 2{p! q!}-\frac 1{(p-1)! (q+1)!}-\frac 1{(p+1)! (q-1)!}\Big)
\Big(\prod\limits_{j=1}^{p+1}c_j\Big) \Big(\prod\limits_{j=1}^{q+1}c_j\Big) .$
\smallskip

Set
\smallskip

\centerline{$\displaystyle
d_{k,p_0}:=\frac 1{p_0! (k-p_0)!}-\frac 1{(p_0-1)! (k-p_0+1)!}=
\frac{k-2\;\! p_0+1}{\;\! p_0! (k-p_0+1)!\;\!}>0$}
\medskip

\noindent and, for $1\leq p\leq p_0 -1\;\!$,
\medskip

\centerline{$\displaystyle
d_{k,p}:=\frac 2{p! (k-p)!}-\frac 1{(p-1)! (k-p+1)!}-\frac 1{(p+1)! (k-p-1)!}\, .$}
\smallskip

\noindent Then
\begin{equation}\label{final-S_k}
\begin{split}
S_k=\;&d_{k,p_0}
\Big(\prod\limits_{j=1}^{p_0+1}c_j\Big) \Big(\prod\limits_{j=1}^{k-p_0+1}c_j\Big)
-\Big(\frac 1{(k-1)!}-\frac 1{k!}\Big)\;\! c_1 \Big(\prod\limits_{j=1}^{k+1}c_j\Big) \\
&+\sum\limits_{1\leq p\leq p_0 -1} d_{k,p}
\Big(\prod\limits_{j=1}^{p+1}c_j\Big) \Big(\prod\limits_{j=1}^{k-p+1}c_j\Big) .
\end{split}
\end{equation}

It is easy to see that
\begin{equation}\label{sign-d_p}
d_{k,p}\geq 0\text{ if }p\geq\frac{k-\sqrt{k+2}}2\;\! ,\qquad
d_{k,p}< 0\text{ if }p<\frac{k-\sqrt{k+2}}2\;\! .
\end{equation}
Let $p_1$ denote the unique integer for which
\medskip

\centerline{$\displaystyle \frac{k-\sqrt{k+2}}2\leq p_1<\frac{k-\sqrt{k+2}}2+1\;\! .$}
\medskip

\noindent If $k=6\;\!$, then $p_0=3$ and $p_1=2\;\!$, while if $k\geq 7\;\!$, then
\medskip

\centerline{$\displaystyle 2\leq \frac{k-\sqrt{k+2}}2\leq\frac{k-1}2\leq
p_1<\frac{k-\sqrt{k+2}}2\leq\frac{k-1}2+1\leq p_0\;\! .$}
\medskip

\noindent Thus we always have $2\leq p_1\leq p_0-1\;\!$.

Since the function
\smallskip

\centerline{$\displaystyle \{ 0\;\! ,1\;\! ,2\;\! ,\;\! ...\;\! ,p_0\}\ni p\longmapsto
\Big(\prod\limits_{j=1}^{p+1}c_j\Big) \Big(\prod\limits_{j=1}^{k-p+1}c_j\Big)
=\Big(\prod\limits_{j=1}^{p+1}c_j\Big)^{\! 2} \Big(\prod\limits_{j=p+2}^{k-p+1}c_j\Big)$}
\smallskip

\noindent is increasing and, according to (\ref{sign-d_p}),
\begin{equation*}
d_{k,p}\geq 0\text{ if }p\geq p_1\;\! ,\quad d_{k,p}< 0\text{ if }p\leq p_1-1\;\! ,
\end{equation*}
we deduce
\begin{equation*}
d_{k,p} \Big(\prod\limits_{j=1}^{p+1}c_j\Big) \Big(\prod\limits_{j=1}^{k-p+1}c_j\Big)\geq
d_{k,p} \Big(\prod\limits_{j=1}^{p_1+1}c_j\Big) \Big(\prod\limits_{j=1}^{k-p_1+1}c_j\Big)\;\! ,
\qquad 1\leq p\leq p_0-1\;\! .
\end{equation*}
We have also
\smallskip

\noindent\hspace{2.42 cm}$\displaystyle
\Big(\prod\limits_{j=1}^{p_0+1}c_j\Big) \Big(\prod\limits_{j=1}^{k-p_0+1}c_j\Big)\geq
\Big(\prod\limits_{j=1}^{p_1+1}c_j\Big) \Big(\prod\limits_{j=1}^{k-p_1+1}c_j\Big)\;\! ,$

\noindent\hspace{4.2 cm}$\displaystyle
c_1 \Big(\prod\limits_{j=1}^{k+1}c_j\Big)\leq
\Big(\prod\limits_{j=1}^{p_1+1}c_j\Big) \Big(\prod\limits_{j=1}^{k-p_1+1}c_j\Big)\;\! ,$
\smallskip

\noindent so (\ref{final-S_k}) yields
\begin{equation}\label{minorized}
S_k\geq \bigg( d_{k,p_0}-\Big(\frac 1{(k-1)!}-\frac 1{k!}\Big) +\sum\limits_{1\leq p\leq p_0 -1} d_{k,p}
\bigg) \Big(\prod\limits_{j=1}^{p_1+1}c_j\Big) \Big(\prod\limits_{j=1}^{k-p_1+1}c_j\Big)
\end{equation}

In order to compute the sum
\medskip

\centerline{$\displaystyle s_k:=d_{k,p_0}-\Big(\frac 1{(k-1)!}-\frac 1{k!}\Big) +
\sum\limits_{1\leq p\leq p_0 -1} d_{k,p}\;\! ,$}
\smallskip

\noindent we notice that, according to (\ref{final-S_k}), $s_k$ is equal to $S_k$ with
$c_1=c_2=\;\! ...\;\!$. Computing $S_k$ in this case by using the formula (\ref{initial-S_k})
instead of (\ref{final-S_k}), we obtain
\smallskip

\centerline{$\displaystyle s_k=
\sum\limits_{p=1}^k\Big(\frac 1{p! (k-p)!}-\frac 1{(p-1)! (k-p+1)!}\Big)\;\! .$}
\medskip

\noindent But this is a telescoping sum, hence it is equal to
$\displaystyle \frac 1{k! 0!}-\frac 1{0! k!}=0\;\! .$
\smallskip

Using now (\ref{minorized}), we deduce the desired result: $S_k\geq 0\;\!$.

\end{proof}

The next majorization theorem is essentially \cite{I-M}, Theorem 1 and  \cite{C-Z1},

\noindent Theorem 1.6, claiming that any increasing function, which satisfies the non-quasianalyticity
condition, can be majorized by some function $c+\ln |\omega (\;\!\cdot\;\! )|$ with
$\omega\in\mathbf{\Omega}$ and $c\geq 0$ a constant$\;\! :$

\begin{theorem}\label{increasing}
For $f : (0\;\! ,+\infty )\longrightarrow (0\;\! ,+\infty )$ the following conditons are

\noindent equivalent$\;\! :$
\begin{itemize}
\item[(i)] $f(t)\leq\alpha (t)\;\! ,t>0\;\! ,$ for $\alpha : (0\;\! ,+\infty )\longrightarrow (0\;\! ,+\infty )$
some increasing

\noindent function satisfying the non-quasianalyticity condition.
\item[(ii)] There exist

\centerline{$\displaystyle 0<t_1\leq t_2\leq t_3\leq\;\! ...\;\! \leq +\infty\;\! ,\quad t_1<+\infty\;\! ,\quad
\sum\limits_{j=1}^{\infty}\frac 1{t_j}<+\infty$}

\noindent and a constant $c\geq 0\;\!$, such that
\begin{equation*}
f(t)\leq c+\ln \max \Big( 1\;\! ,\sup\limits_{k\geq 1}\frac{t^k}{t_1\;\! t_2\;\! ...\;\! t_k}\Big)\;\! ,\qquad t>0\;\! .
\end{equation*}
\item[(iii)] $f(t)\leq c+\ln |\omega (t)|\;\! ,t>0\;\! ,$ for some $\omega\in\mathbf{\Omega}$ and
constant $c\geq 0\;\!$.
\end{itemize}
A necessary condition that $f$ satisfies the above equivalent conditions is
\begin{equation}\label{nec.incr}
\lim\limits_{t\to +\infty}\frac{f(t)}t=0\;\! .
\end{equation}
\end{theorem}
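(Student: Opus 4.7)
The plan is to prove the triangle of implications $(\text{iii}) \Rightarrow (\text{i}) \Rightarrow (\text{ii}) \Rightarrow (\text{iii})$ and then to derive the necessary condition \eqref{nec.incr} by an elementary comparison. The hard link is $(\text{i}) \Rightarrow (\text{ii})$, which is essentially the Inozemcev--Marcenko majorization result.

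The two easy links come almost for free. For $(\text{iii}) \Rightarrow (\text{i})$, set $\alpha(t) := c + \ln|\omega(t)|$: this is strictly increasing on $(0,+\infty)$ since $|\omega(t)|^2 = \prod_j(1+t^2/t_j^2)$ visibly is, while the non-quasianalyticity of $\ln|\omega(\cdot)|$ is exactly Remark \ref{explicite}(4). For $(\text{ii}) \Rightarrow (\text{iii})$, attach to the same sequence $(t_j)$ the entire function $\omega(z):= \prod_j(1+iz/t_j) \in \mathbf{\Omega}$; expanding $|\omega(t)|^2 = \prod_j(1+t^2/t_j^2)$ and retaining for each $k\geq 1$ only the ``diagonal'' term $\prod_{j=1}^k t^2/t_j^2 = t^{2k}/(t_1\cdots t_k)^2$ (all omitted terms being nonnegative), together with the trivial $|\omega(t)|\geq 1$, gives $\ln|\omega(t)| \geq N(t)$, where $N$ is the function on the right of (ii). Hence (iii) holds with the same constant $c$ and the same sequence.

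The heart of the proof is $(\text{i}) \Rightarrow (\text{ii})$. The guiding idea is to construct a sequence $(t_j)$ whose counting function $n(t) := \#\{j : t_j \leq t\}$ grows comparably to (a constant multiple of) $\alpha$. By Abel summation one checks that, for $t\geq t_1$,
\begin{equation*}
N(t) = \sum_{j\leq n(t)} \ln(t/t_j) = \int_{t_1}^t \frac{n(s)}{s}\,ds,
\end{equation*}
and, similarly, $\sum_j 1/t_j = \int_0^\infty n(s)/s^2\,ds$. One must therefore pick $n$ so that $\int n(s)/s^2\,ds < \infty$ (to get $\omega\in\mathbf{\Omega}$) and simultaneously $N(t) \geq \alpha(t) - c$ (so that $f \leq \alpha \leq c + N$). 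The naive choice $n(t) = \lfloor\alpha(t)\rfloor$ satisfies the first requirement but can fail the second when $\alpha$ has rapid growth: the derivative $n(t)/t - \alpha'(t)$ need not stay nonnegative. The cure, which is the main technical obstacle, is to place the $t_j$'s somewhat denser than $\alpha$ would naively suggest and to use a careful dyadic/inductive refinement. A fully detailed construction is the content of \cite{I-M}, Theorem 1, taken up again in \cite{C-Z1}, Theorem 1.6; I would invoke either of these to supply the desired sequence.

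Finally, the necessary condition \eqref{nec.incr} follows from the monotonicity of $\alpha$ and the tail vanishing of a convergent integral:
\begin{equation*}
\frac{\alpha(t)}{2t} = \alpha(t)\int_t^{2t}\frac{ds}{s^2} \leq \int_t^{2t}\frac{\alpha(s)}{s^2}\,ds \xrightarrow[t\to+\infty]{} 0,
\end{equation*}
so $f(t)/t \leq \alpha(t)/t \to 0$. As already noted, the only real difficulty in the whole argument is balancing summability of $1/t_j$ against the lower bound on $N$ in the Inozemcev--Marcenko step.
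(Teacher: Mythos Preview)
Your proposal is correct and follows essentially the same approach as the paper: both invoke \cite{I-M}, Theorem~1 / \cite{C-Z1}, Theorem~1.6 for the hard implication $(\text{i})\Rightarrow(\text{ii})$, and both derive \eqref{nec.incr} by bounding $\alpha(t)/t$ via the tail of the convergent integral (the paper uses $\alpha(t)/t=\int_t^{\infty}\alpha(t)/s^2\,ds\leq\int_t^{\infty}\alpha(s)/s^2\,ds$, a variant of your $\int_t^{2t}$ estimate). Your write-up in fact supplies more detail on the easy implications $(\text{iii})\Rightarrow(\text{i})$ and $(\text{ii})\Rightarrow(\text{iii})$ than the paper, which simply cites the reference for all three equivalences.
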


\begin{proof}
The equivalences $({\rm i})\Leftrightarrow ({\rm ii})\Leftrightarrow ({\rm iii})$ are immediate
consequences of the corresponding equivalences in \cite{C-Z1}, Theorem 1.6.
\smallskip

Also the necessary condition (\ref{nec.incr}) is well-known. Here is a short proof of it$\;\! :$

Let $\alpha$ be as in (i). Then
\smallskip

\centerline{$\displaystyle
0\leq\frac{f(t)}t\leq \frac{\alpha (t)}t=\int\limits_t^{+\infty}\!\frac{\alpha (t)}{s^2}\;\!{\rm d}s\leq
\int\limits_t^{+\infty}\!\frac{\alpha (s)}{s^2}\;\!{\rm d}s\xrightarrow{\; t\to +\infty\;}0\; .$}

\end{proof}

The second majorization theorem is an extended version of \cite{I-M}, Theorem 2 and 
\cite{C-Z1}, Theorem 1.8. It claims essentially that Lebesgue measurable positive subadditive
functions on $(0\;\! ,+\infty )\;\!$, which are bounded on $(0\;\! ,1]$ and satisfy the
non-quasianalyticity condition, can be majorized by a continuous, increasing, concave function
satisfying the non-quasianalyticity condition, or by a function of the form $c+\ln |\omega (\;\!\cdot\;\! )|$
with $\omega\in\mathbf{\Omega}_0$ and $c\geq 0$ a constant$\;\! :$

\begin{theorem}\label{concave}
For $f : (0\;\! ,+\infty )\longrightarrow (0\;\! ,+\infty )$ the following conditons are

\noindent equivalent$\;\! :$
\begin{itemize}
\item[(i)] $f(t)\leq\alpha (t)\;\! ,t>0\;\! ,$ for $\alpha : (0\;\! ,+\infty )\longrightarrow (0\;\! ,+\infty )$
some Lebesgue

\noindent measurable, subadditive function, bounded on $(0\;\! ,1]$ and satisfying the
non-quasianalyticity condition.
\item[(ii)] $f(t)\leq\alpha (t)\;\! ,t>0\;\! ,$ for $\alpha : (0\;\! ,+\infty )\longrightarrow (0\;\! ,+\infty )$
some continuous

\noindent function, bounded on $(0\;\! ,1]$ and satisfying the non-quasianalyticity

\noindent condition, such that $\displaystyle (0\;\! ,+\infty )\ni t\longmapsto\frac{\alpha (t)}t$ is decreasing.
\item[(iii)] $f(t)\leq\alpha (t)\;\! ,t>0\;\! ,$ with $\alpha : (0\;\! ,+\infty )\longrightarrow (0\;\! ,+\infty )$
some increasing, concave function satisfying the non-quasianalyticity condition.
\item[(iv)] $f(t)\leq\alpha (t)\;\! ,t>0\;\! ,$ with $\alpha : (0\;\! ,+\infty )\longrightarrow (0\;\! ,+\infty )$
some infinitely

\noindent differentiable, increasing, concave function satisfying the non-

\noindent quasianalyticity condition.
\item[(v)] There exist

\centerline{$\displaystyle 0<t_1\leq \frac{t_2}2\leq \frac{t_3}3\leq\;\! ...\;\! \leq +\infty\;\! ,\quad t_1<+\infty
\;\! ,\quad \sum\limits_{j=1}^{\infty}\frac 1{t_j}<+\infty$}

\noindent and a constant $c\geq 0\;\!$, such that
\begin{equation*}
f(t)\leq c+\ln\Big(1+\sum\limits_{k=1}^{\infty}\frac{t^k}{t_1\;\! ...\;\! t_k}\Big)\;\! ,\qquad t>0\;\! .
\end{equation*}
\item[(vi)] There exist

\centerline{$\displaystyle 0<t_1\leq \frac{t_2}2\leq \frac{t_3}3\leq\;\! ...\;\! \leq +\infty\;\! ,\quad t_1<+\infty
\;\! ,\quad \sum\limits_{j=1}^{\infty}\frac 1{t_j}<+\infty$}

\noindent and a constant $c\geq 0\;\!$, such that
\begin{equation*}
f(t)\leq c+\ln \max \Big( 1\;\! ,\sup\limits_{k\geq 1}\frac{t^k}{t_1\;\! t_2\;\! ...\;\! t_k}\Big)\;\! ,\qquad t>0\;\! .
\end{equation*}
\item[(vii)] $f(t)\leq c+\ln |\omega (t)|\;\! ,t>0\;\! ,$ for some $\omega\in\mathbf{\Omega}_0$ and
constant $c\geq 0\;\!$.
\end{itemize}
A necessary condition that $f$ satisfies the above equivalent conditions is
\begin{equation}\label{nec.subadd}
\lim\limits_{t\to +\infty}\frac{\;\! f(t) \ln t}t=0\;\! .
\end{equation}
\end{theorem}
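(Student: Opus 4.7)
I arrange the proof as a cycle of implications. The chain $(\mathrm{iv}) \Rightarrow (\mathrm{iii}) \Rightarrow (\mathrm{ii}) \Rightarrow (\mathrm{i})$ is the easy regularity descent; I close the cycle via $(\mathrm{i}) \Rightarrow (\mathrm{iii})$ (the classical \cite{I-M}, Theorem 2 / \cite{C-Z1}, Theorem 1.8, which produces a continuous increasing concave non-quasianalytic majorant from a subadditive one), followed by $(\mathrm{iii}) \Rightarrow (\mathrm{v}) \Rightarrow (\mathrm{iv})$. The explicit conditions (v)--(vii) are patched in via $(\mathrm{v}) \Leftrightarrow (\mathrm{vi})$, $(\mathrm{vi}) \Rightarrow (\mathrm{vii})$, and $(\mathrm{vii}) \Rightarrow (\mathrm{iv})$. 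The necessary condition (\ref{nec.subadd}) is handled separately at the end.

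The easy steps are as follows. $(\mathrm{iv}) \Rightarrow (\mathrm{iii}) \Rightarrow (\mathrm{ii}) \Rightarrow (\mathrm{i})$ uses the hierarchy ``$C^\infty$ concave $\Rightarrow$ concave $\Rightarrow$ $\alpha(t)/t$ decreasing (whence continuous) $\Rightarrow$ subadditive'' recorded before Lemma \ref{concave-explicite}. $(\mathrm{v}) \Rightarrow (\mathrm{iv})$ is Lemma \ref{concave-explicite} (infinite differentiability, monotonicity, concavity) together with Remark \ref{explicite} for non-quasianalyticity. $(\mathrm{v}) \Leftrightarrow (\mathrm{vi})$ follows from the bracketing
\[
\max(1,\sup_k a_k) \le 1 + \sum_k a_k \le 2\max(1,\sup_k 2^k a_k)
\]
applied to $a_k = t^k/(t_1\cdots t_k)$, with the factor $2$ absorbed by the dilation $t_k \mapsto t_k/2$ (which preserves $t_k/k$ increasing). $(\mathrm{vi}) \Rightarrow (\mathrm{vii})$ takes $\omega(z) := \prod_k(1+iz/t_k) \in \mathbf{\Omega}_0$: expanding $|\omega(t)|^2$ as a sum of elementary symmetric polynomials in $(1/t_j^2)$ and retaining one term gives $|\omega(t)|^2 \ge t^{2k}/(t_1\cdots t_k)^2$, hence $\ln|\omega(t)| \ge N(t)$. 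Finally $(\mathrm{vii}) \Rightarrow (\mathrm{iv})$ reuses the construction from the proof of Theorem \ref{min}: the function $\ln 3 + 2\ln(1+\sum (4t)^k/(t_1\cdots t_k))$ is $C^\infty$, increasing and concave by Lemma \ref{concave-explicite}, it majorizes $\ln|\omega(t)|$ by \cite{C-Z1}, Lemma 1.7, and remains non-quasianalytic under the linear rescaling.

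The genuinely new step is $(\mathrm{iii}) \Rightarrow (\mathrm{v})$, producing a sequence $(t_k)$ with the strengthened property that $t_k/k$ is increasing. Given an increasing concave non-quasianalytic $\alpha$, I define $t_k := \alpha^{-1}(k c_0)$ for a suitable normalization constant $c_0$ (adjusting finitely many initial values if needed). Concavity says exactly that $\alpha(t)/t$ is decreasing, so $t_{k+1} > t_k$ yields $\alpha(t_{k+1})/t_{k+1} \le \alpha(t_k)/t_k$; substituting $\alpha(t_k) = kc_0$ rewrites this as $t_{k+1}/(k+1) \ge t_k/k$, which is exactly the $\mathbf{\Omega}_0$-condition. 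Summability $\sum 1/t_k < +\infty$ follows from non-quasianalyticity via the change of variables $u = \alpha(t)$ and integration by parts, which transforms $\int_1^{+\infty}\alpha(t)/t^2\,dt$ into $\int 1/\alpha^{-1}(u)\,du$ up to boundary terms, comparable to $\sum_k 1/\alpha^{-1}(k)$. The majorization $\alpha(t) \le c + \ln(1+\sum t^k/(t_1\cdots t_k))$ proceeds in two Legendre-type steps: the concavity inequality $\alpha'(s) \le \alpha(s)/s$ (using $\alpha(0^+) = 0$) integrates to $\alpha(t) \le \int_0^t \alpha(s)/s\,ds$, and this integral equals $\sup_k(k\ln t - \sum_{j=1}^k \ln t_j) + O(1)$, the discrete Legendre transform of the sequence $(\ln t_k)$, which is bounded by $\ln(1+\sum t^k/(t_1\cdots t_k))$. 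The principal obstacle I expect is exactly this Legendre calibration: making $t_k/k$ (and not merely $t_k$) monotone is what pushes the output into $\mathbf{\Omega}_0$, and it is concavity of $\alpha$ --- not just monotonicity or subadditivity --- that makes it possible.

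For the necessary condition (\ref{nec.subadd}), with (vii) in hand, write $u_k := t_k/k$: this is non-decreasing, and $\sum 1/(ku_k) < +\infty$ combined with monotonicity of $u_k$ forces $u_k/\ln k \to +\infty$ by Cauchy condensation. A Jensen-type splitting of $\ln|\omega(t)| = (1/2)\sum_k \ln(1+t^2/t_k^2)$ into the blocks $t_k \le t$ and $t_k > t$ yields $\ln|\omega(t)| = O\bigl((t/u_{n(t)})\ln u_{n(t)}\bigr)$, where $n(t)$ is the zero-counting function; since $\ln t = O(\ln u_{n(t)} + \ln n(t))$ and $u_{n(t)} \gg \ln n(t)$, the product $\ln|\omega(t)|\cdot \ln t/t$ tends to $0$, whence $f(t)\ln t/t \to 0$.
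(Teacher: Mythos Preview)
Your overall logical structure is sound, but there are two points worth flagging, one a genuine gap and one a matter of economy.

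\textbf{The gap in (iii)$\Rightarrow$(v).} Your discretization $t_k=\alpha^{-1}(kc_0)$ is the right idea, and the verification that $t_k/k$ is increasing and $\sum 1/t_k<\infty$ is fine. But the majorization step is not correct as written. With $n(s)=\lfloor\alpha(s)/c_0\rfloor$ one has $N(t)=\int_{t_1}^t n(s)/s\,ds$, so
\[
\frac{1}{c_0}\int_{t_1}^t\frac{\alpha(s)}{s}\,ds-\ln\frac{t}{t_1}\ \le\ N(t)\ \le\ \frac{1}{c_0}\int_{t_1}^t\frac{\alpha(s)}{s}\,ds\,.
\]
Thus your claimed equality $\int_0^t\alpha(s)/s\,ds=N(t)+O(1)$ is off by $O(\ln t)$, not $O(1)$; combined with $\alpha(t)\le\alpha(t_1)+\int_{t_1}^t\alpha(s)/s\,ds$ (which is what concavity actually gives --- integrating from $0$ is problematic since the integral typically diverges there) you only get $\alpha(t)\le N(t)+O(\ln t)$. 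The repair is to take $c_0<1$ and use one extra observation: for unbounded increasing $\alpha$ one has $\int_{t_1}^t\alpha(s)/s\,ds\big/\ln t\to\infty$, so the surplus factor $(1/c_0-1)\int_{t_1}^t\alpha(s)/s\,ds$ eventually dominates $\ln t$, yielding $N(t)\ge\alpha(t)-c$ for large $t$. (Bounded $\alpha$ is trivial.) This is not difficult, but it is a step your sketch omits.

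\textbf{Comparison with the paper.} The paper takes a shorter route: it invokes \cite{C-Z1}, Theorem~1.8 directly for the equivalences (i)$\Leftrightarrow$(vi)$\Leftrightarrow$(vii), then gets (v) from (vi) trivially and (iv) from (v) via Lemma~\ref{concave-explicite}; the chain (iii)$\Rightarrow$(ii)$\Rightarrow$(i) closes the cycle. So the paper never proves (iii)$\Rightarrow$(v) from scratch --- the heavy lifting is delegated to the cited reference (whose content is closer to (i)$\Rightarrow$(vi) than to the Beurling step (i)$\Rightarrow$(iii) you cite). Your route is more self-contained but correspondingly requires the work above.

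\textbf{The necessary condition.} Your argument via (vii), counting zeros and estimating $\ln|\omega(t)|$ block by block, can be made to work but is far more elaborate than needed. The paper's two-line proof starts from condition (ii): since $\alpha(s)/s$ is decreasing,
\[
\int_{\sqrt t}^{\infty}\frac{\alpha(s)}{s^2}\,ds\ \ge\ \int_{\sqrt t}^{t}\frac{\alpha(t)}{t}\cdot\frac{1}{s}\,ds\ =\ \frac{\alpha(t)\ln t}{2t}\,,
\]
and the left side tends to $0$. You should use this instead.
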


\begin{proof}
The equivalences $({\rm i})\Leftrightarrow ({\rm vi})\Leftrightarrow ({\rm vii})$ are immediate
consequences of the equivalences $({\rm iii})\Leftrightarrow ({\rm ii})\Leftrightarrow ({\rm iv})$
in \cite{C-Z1}, Theorem 1.8.

Implications $({\rm vi})\Rightarrow ({\rm v})$ and $({\rm iv})\Rightarrow ({\rm iii})$ are trivial,
while implication

\noindent $({\rm v})\Rightarrow ({\rm iv})$ follows by Lemma \ref{concave-explicite}
and Remark \ref{explicite}.

Finally, the implications $({\rm iii})\Rightarrow ({\rm ii})\Rightarrow ({\rm i})$ where proved in
the discussion before Lemma \ref{concave-explicite}.

The necessary condition (\ref{nec.subadd}) is, like (\ref{nec.incr}) in Theorem \ref{increasing},
well-known. We provide a short proof of it, essentially reproducing the proof of \cite{Bj}, Corollary 1.2.8$\;\! :$

Let $\alpha$ be as in (ii). We have for every $t>1$
\begin{equation*}
\int\limits_{\sqrt{t}}^{+\infty}\frac{\alpha (s)}{s^2}\;\!{\rm d}s\geq
\int\limits_{\sqrt{t}}^t\frac{\alpha (s)}{s^2}\;\!{\rm d}s\geq
\int\limits_{\sqrt{t}}^t\frac{\alpha (t)}t\;\!\frac 1{s}\;\!{\rm d}s=\frac{\alpha (t)}t\;\! \ln\frac t{\sqrt{t}}=
\frac 12\;\! \frac{\alpha (t)\ln t}t\;\! ,
\end{equation*}
so
\begin{equation*}
0\leq \frac{\alpha (t)\ln t}t\leq 2 \int\limits_{\sqrt{t}}^{+\infty}\frac{\alpha (s)}{s^2}\;\!{\rm d}s
\xrightarrow{\; t\to +\infty\;}0\; .
\end{equation*}
\end{proof}

We notice that $({\rm i})\Leftrightarrow ({\rm iii})$ in Theorem \ref{concave} was originally
proved by A. Beurling (see \cite{Be1}, lemma 1, \cite{Bj}, Theorem 1.2.7, \cite{Be2}, Lemma V),
\cite{G}, Lemma 3.3). A new feature of Theorem \ref{concave} consists in the exhibition
(thanks to Lemma \ref{concave-explicite}) of a rather explicite $\alpha$ in (iii), obtaining thus
the equivalent conditions (iv) and (v).

Clearly, every $f$, which satisfies the equivalent conditions in Theorem \ref{concave}, satisfies also
the equivalent conditions in Theorem \ref{increasing}.
It is an intriguing question: does it exist $f$ satisfying the conditions in Theorem \ref{increasing},
but not those in Theorem \ref{concave} ? The answer is yes$\;\! :$

\begin{corollary}
There exists an increasing function $(0\;\! ,+\infty )\longrightarrow (0\;\! ,+\infty )$ satisfying the
non-quasianalyticity condition, which can not be majorized by

\noindent any Lebesgue measurable, subadditive function on $(0\;\! ,+\infty )\;\!$, which is bounded
on $(0\;\! ,1]$ and satisfies the non-quasianalyticity condition.

Consequently there exists $\omega\in\mathbf{\Omega}$ such that $|\omega (\;\!\cdot\;\! )|$ can not
be majorized by a scalar multiple of some $|\rho (\;\!\cdot\;\! )|$ with $\rho\in\mathbf{\Omega}_0\;\!$.
\end{corollary}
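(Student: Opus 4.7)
The plan exploits the discrepancy between the necessary conditions established in Theorems \ref{increasing} and \ref{concave}: the former only forces $\lim_{t\to+\infty} f(t)/t = 0$, whereas the latter additionally forces $\lim_{t\to+\infty} f(t)\ln t/t = 0$. Accordingly I will construct an increasing function $f : (0,+\infty) \to (0,+\infty)$ that satisfies the non-quasianalyticity condition but violates the stronger limit. Once such $f$ is at hand, any candidate subadditive, Lebesgue measurable, non-quasianalytic majorant $\alpha$ bounded on $(0,1]$ would, by Theorem \ref{concave}, satisfy $\lim_{t\to+\infty}\alpha(t)\ln t/t = 0$; since $f \leq \alpha$ and both are positive, the same limit for $f$ would follow, a contradiction.

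For the construction I take the rapidly growing sequence $T_n := e^{2^n}$, $n \geq 1$, and define the staircase $f(t) := T_1/\ln T_1$ on $(0, T_1)$ and $f(t) := T_n/\ln T_n = e^{2^n}/2^n$ on $[T_n, T_{n+1})$. The map $t \mapsto t/\ln t$ is increasing on $(e, +\infty)$, so $f$ is increasing. The estimate
\[
\int_{T_n}^{T_{n+1}}\!\frac{f(t)}{t^2}\,\mathrm{d}t = \frac{T_n}{\ln T_n}\Big(\frac{1}{T_n} - \frac{1}{T_{n+1}}\Big) < \frac{1}{\ln T_n} = \frac{1}{2^n}
\]
together with the finite integral on $(1, T_1)$ gives non-quasianalyticity, while $f(T_n)\ln T_n/T_n = 1$ for every $n \geq 1$ shows that the necessary condition of Theorem \ref{concave} fails, so the first assertion is proved.

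For the second assertion I feed the above $f$ into Theorem \ref{increasing}(iii) to extract $\omega \in \mathbf{\Omega}$ and $c \geq 0$ with $f(t) \leq c + \ln|\omega(t)|$, $t > 0$; specializing to $t = T_n$ gives $\ln|\omega(T_n)|\ln T_n/T_n \geq 1 - c \cdot 2^n e^{-2^n} \to 1$. If there existed $\rho \in \mathbf{\Omega}_0$ and $c_2 > 0$ with $|\omega(t)| \leq c_2|\rho(t)|$ for all $t \in \mathbb{R}$, then Theorem \ref{concave}, applied to the positive function $\ln|\rho(\cdot)|$ (which satisfies its condition (vii) with $c = 0$), would yield $\lim_{t\to+\infty} \ln|\rho(t)|\ln t/t = 0$; combined with $\ln|\omega(t)| \leq \ln c_2 + \ln|\rho(t)|$ this would force $\limsup_{t\to+\infty} \ln|\omega(t)|\ln t/t \leq 0$, contradicting the lower bound of $1$ just obtained. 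I do not anticipate any genuine obstacle: the only choice requiring thought is picking $T_n$ sparse enough so that $\sum 1/\ln T_n < +\infty$ while keeping $T_n/\ln T_n$ monotone, and the double-exponential sequence above makes both manifest.
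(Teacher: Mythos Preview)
Your proof is correct and follows essentially the same approach as the paper: construct a staircase function taking the value $T_n/\ln T_n$ on $[T_n,T_{n+1})$ for a sparse sequence $(T_n)$ with $\sum 1/\ln T_n<+\infty$ (the paper suggests $T_n=e^{n^2}$, you take $T_n=e^{2^n}$), check non-quasianalyticity by the same telescoping estimate, and contradict the necessary condition~(\ref{nec.subadd}) of Theorem~\ref{concave} via $f(T_n)\ln T_n/T_n=1$. You are in fact slightly more careful than the paper in keeping $f$ strictly positive on $(0,T_1)$ and in spelling out the ``Consequently'' clause through Theorem~\ref{increasing}(iii) and Theorem~\ref{concave}(vii), which the paper leaves implicit.
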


\begin{proof}
Let $e= t_1<t_2<t_3<\;\! ...$ be a sequence such that $\displaystyle \sum\limits_{k=1}^{\infty}
\frac 1{\ln t_k}<+\infty$ (for example, $t_k=e^{k^2}$). 
Defining the function $f : (0\;\! ,+\infty )\longrightarrow (0\;\! ,+\infty )$ by
\medskip

\noindent\hspace{3.2 cm}$\displaystyle f(t):=0\text{ for }0<t<t_1\;\! ,$

\noindent\hspace{3.2 cm}$\displaystyle f(t):=\frac{t_k}{\ln t_k}\text{ for }t_k\leq t< t_{k+1}\;\! ,k\geq 1\;\! ,$
\medskip

\noindent $f$ will be increasing and satisfying the non-quasianalyticity condition$\;\! :$
\begin{equation*}
\int\limits_1^{+\infty}\!\frac{f(t)}{t^2}\;\!{\rm d}t=\sum\limits_{k=1}^{\infty}
\int\limits_{t_k}^{t_{k+1}}\!\frac{f(t)}{t^2}\;\!{\rm d}t=\sum\limits_{k=1}^{\infty}\;\!
\frac{t_k}{\ln t_k} \Big(\frac 1{t_k}-\frac 1{t_{k+1}}\Big)\leq\sum\limits_{k=1}^{\infty}\;\!
\frac 1{\ln t_k}<+\infty\;\! .
\end{equation*}

The above defined $f$ can not be majorized by any Lebesgue measurable, subadditive function
on $(0\;\! ,+\infty )\;\!$, which is bounded on $(0\;\! ,1]$ and satisfies the non-quasianalyticity condition.
Indeed, otherwise (\ref{nec.subadd}) would hold true byTheorem \ref{concave}, contradicting
$\displaystyle \frac{\;\! f(t_k) \ln t_k}{t_k}=1\;\! ,k\geq 1\;\!$.

\end{proof}

\section{The mild strong non-quasianalyticity condition}

First at all we notice that if $\alpha\;\! ,\beta : (0\;\! ,+\infty )\longrightarrow (0\;\! ,+\infty )$ are increasing,
$\displaystyle \int\limits_1^{+\infty}\!\frac{\alpha (t)}{t^2}\;\!{\rm d}t<+\infty$ and $\displaystyle
\lim\limits_{t\to +\infty}\frac{\beta (t)}t=0\;\!$, then $\displaystyle \int\limits_1^{+\infty}\!\frac{\alpha (t)}{t^2}
\ln \frac t{\beta (t)}\;\!{\rm d}t> -\infty$ is a well defined improper integral.
Indeed, if $t_0\geq 1$ is such that $\displaystyle \frac{\beta (t)}t<1$ for $t\geq t_0\;\!$, then
$\displaystyle [\;\! t_0\;\! ,+\infty )\ni t\longmapsto \frac{\alpha (t)}{t^2}\ln \frac t{\beta (t)}$ is a
positive Lebesgue measurable function.

In particular, if $\alpha : (0\;\! ,+\infty )\longrightarrow (0\;\! ,+\infty )$ is an increasing function and

\noindent $\displaystyle \int\limits_1^{+\infty}\!\frac{\alpha (t)}{t^2}\;\!{\rm d}t<+\infty\;\!$, then
$\displaystyle \int\limits_1^{+\infty}\!\frac{\alpha (t)}{t^2}\ln \frac t{\alpha (t)}\;\!{\rm d}t>-\infty$ is a
well defined improper integral.
Indeed, we have $\displaystyle \lim\limits_{t\to +\infty}\frac{\alpha (t)}t=0$ by Theorem \ref{increasing}.
\smallskip

Let us say that an increasing function $\alpha : (0\;\! ,+\infty )\longrightarrow (0\;\! ,+\infty )$
satisfies the {\it mild strong non-quasianalyticity condition} if
\medskip

\centerline{$\displaystyle \int\limits_1^{+\infty}\!\frac{\alpha (t)}{t^2}\;\!{\rm d}t<+\infty\;$ and
$\displaystyle \int\limits_1^{+\infty}\!\frac{\alpha (t)}{t^2}\ln \frac t{\alpha (t)}\;\!{\rm d}t<+\infty\;\! .$}
\smallskip

\noindent We notice that, for $\omega\in\mathbf{\Omega}\;\!$, $(0\;\! ,+\infty )\ni t\longmapsto
|\omega (t)|\in (0\;\! ,+\infty )$ satisfies the

\noindent mild non-quasianalyticity condition exactly when condition (\ref{w-str}) is satisfied,
that is when $\omega$ satisfies the mild non-quasianalyticity condition as defined in Section 2.
\smallskip

\begin{proposition}\label{msnq-permanence1}
Let $\alpha : (0\;\! ,+\infty )\longrightarrow (0\;\! ,+\infty )$ be an increasing function

\noindent satisfying the mild strong non-quasianalyticity condition. Then
\begin{itemize}
\item[(i)] $c\cdot\alpha$ and $\alpha (L\,\cdot\;\! )$
satisfy the mild strong non-quasianalyticity condition for each $c>0$ and $L>0\;\! ;$
\item[(ii)] $\alpha +\beta$ satisfies the mild strong non-quasianalyticity condition for

\noindent each increasing $\beta : (0\;\! ,+\infty )\longrightarrow (0\;\! ,+\infty )$ satisfying the mild strong
non-quasianalyticity condition$\;\! ;$
\item[(iii)] any increasing $\beta : (0\;\! ,+\infty )\longrightarrow (0\;\! ,+\infty )\;\! ,\beta\leq\alpha\;\! ,$
satisfies the mild strong non-quasianalyticity condition.
\end{itemize}
\end{proposition}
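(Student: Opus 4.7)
The plan is to verify the mild strong non-quasianalyticity condition directly from its definition in each of the three cases. In every case the first integral $\int_1^{+\infty}\cdot\,/t^2\,dt$ is either obvious or reduces to a one-line change of variables, and the preamble to the proposition guarantees that, once this first integral is finite, the log-weighted integral is a well-defined improper Lebesgue integral. I therefore concentrate on the finiteness of the log-weighted integral.

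For (i), I would expand
\begin{equation*}
\frac{c\,\alpha(t)}{t^2}\,\ln\frac{t}{c\,\alpha(t)}
=c\cdot\frac{\alpha(t)}{t^2}\,\ln\frac{t}{\alpha(t)}-c\ln c\cdot\frac{\alpha(t)}{t^2},
\end{equation*}
both terms being integrable on $[1,+\infty)$ by the two hypotheses on $\alpha$. For $\alpha(L\,\cdot\,)$ the substitution $s=Lt$ gives
\begin{equation*}
\int_1^{+\infty}\!\frac{\alpha(Lt)}{t^2}\,\ln\frac{t}{\alpha(Lt)}\,{\rm d}t
=L\!\int_L^{+\infty}\!\frac{\alpha(s)}{s^2}\Big(\ln\frac{s}{\alpha(s)}-\ln L\Big){\rm d}s,
\end{equation*}
which again splits into two integrals finite by hypothesis.

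For (ii), $\alpha+\beta$ is still increasing and $\int_1^{+\infty}(\alpha+\beta)/t^2\,{\rm d}t<+\infty$, so the log-weighted integral is well defined. I would split
\begin{equation*}
\frac{\alpha+\beta}{t^2}\,\ln\frac{t}{\alpha+\beta}
=\frac{\alpha}{t^2}\,\ln\frac{t}{\alpha+\beta}
+\frac{\beta}{t^2}\,\ln\frac{t}{\alpha+\beta}.
\end{equation*}
Since $(\alpha(t)+\beta(t))/t\to 0$ by Theorem \ref{increasing}, the factor $\ln(t/(\alpha+\beta))$ is eventually positive, and from $\alpha+\beta\geq\alpha$ and $\alpha+\beta\geq\beta$ the two summands are dominated on the tail by $(\alpha/t^2)\ln(t/\alpha)$ and $(\beta/t^2)\ln(t/\beta)$ respectively, both integrable by hypothesis.

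Part (iii) is the main obstacle, because from $\beta\leq\alpha$ one only gets $\ln(t/\beta)\geq\ln(t/\alpha)$, so no pointwise domination of the integrands is immediate. The key observation I would use is that the function $x\mapsto x\ln(t/x)$ is increasing on $(0\,,t/e]$. Since $\alpha(t)/t\to 0$ as $t\to+\infty$ by Theorem \ref{increasing}, there exists $t_0\geq 1$ with $\alpha(t)\leq t/e$ for $t\geq t_0$; then $\beta(t)\leq\alpha(t)\leq t/e$ on $[t_0,+\infty)$, and monotonicity of $x\ln(t/x)$ on $(0,t/e]$ yields
\begin{equation*}
\frac{\beta(t)}{t^2}\,\ln\frac{t}{\beta(t)}
\leq\frac{\alpha(t)}{t^2}\,\ln\frac{t}{\alpha(t)},\qquad t\geq t_0,
\end{equation*}
so the tail of the $\beta$-integral is dominated by the tail of the $\alpha$-integral. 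On the compact interval $[1\,,t_0]$, $\beta$ is bounded between $\beta(1)>0$ and $\beta(t_0)\leq\alpha(t_0)<+\infty$, so $\ln(t/\beta(t))$ is bounded there and the contribution is finite, completing the verification.
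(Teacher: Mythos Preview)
Your proof is correct and follows essentially the same approach as the paper: for (ii) you use the same pointwise inequality $\frac{\alpha+\beta}{t^2}\ln\frac{t}{\alpha+\beta}\le\frac{\alpha}{t^2}\ln\frac{t}{\alpha}+\frac{\beta}{t^2}\ln\frac{t}{\beta}$ (just with the splitting made explicit), and for (iii) you use exactly the paper's key observation that $x\mapsto x\ln(t/x)$ is increasing on $(0,t/e]$ together with $\alpha(t)/t\to 0$. Your treatment of (i) is more detailed than the paper's, which simply declares it ``immediate.''
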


\begin{proof}
The proof of (i) is immediate. Also (ii) is easily seen by using that
\medskip

\centerline{$\displaystyle \frac{\alpha (t)+\beta (t)}{t^2}\ln \frac t{\alpha (t)+\beta (t)}\;\!{\rm d}t\leq
\frac{\alpha (t)}{t^2}\ln \frac t{\alpha (t)}\;\!{\rm d}t+
\frac{\beta (t)}{t^2}\ln \frac t{\beta (t)}\;\!{\rm d}t\;\! .$}
\smallskip

\noindent For the proof of (iii) we notice that, according to Theorem \ref{increasing}, there exists
a $t_0\geq 1$ such that $\displaystyle \frac{\alpha (t)}t<\frac 1e\Leftrightarrow\alpha (t)<\frac te$
for all $t\geq t_0\;\!$. Then
\medskip

\centerline{$\displaystyle 
\beta (t)\ln \frac t{\beta (t)}\leq\alpha (t)\ln \frac t{\alpha (t)}\;\! ,\qquad t\geq t_0\;\! .$}
\smallskip

\noindent Indeed, $\displaystyle  \Big( 0\;\! ,\frac te\Big)\ni x\longmapsto x\ln \frac tx$ is
increasing and $\displaystyle  0<\beta (t)\leq\alpha (t)<\frac te\;\!$.

\end{proof}

At first view, the next characterization of mild strong non-quasianalyticity (more precisely, of
its negation) can appear surprising$\;\! :$

\begin{proposition}\label{msnq-charact}
Let $\alpha : (0\;\! ,+\infty )\longrightarrow (0\;\! ,+\infty )$ be an increasing function

\noindent such that $\displaystyle \int\limits_1^{+\infty}\!\frac{\alpha (t)}{t^2}\;\!{\rm d}t<+\infty\;\!$.
Then the following conditions are equivalent$\, :$
\begin{itemize}
\item[(i)] $\alpha$ does not satisfy the mild strong non-quasianalyticity condition,

\noindent that is $\displaystyle \int\limits_1^{+\infty}\!\frac{\alpha (t)}{t^2}\ln \frac t{\alpha (t)}\;\!{\rm d}t
=+\infty\;\! .$
\item[(ii)] For any increasing function $\beta : (0\;\! ,+\infty )\longrightarrow (0\;\! ,+\infty )$ such that

\noindent $\displaystyle \int\limits_1^{+\infty}\!\frac{\beta (t)}{t^2}\;\!{\rm d}t<+\infty\;\!$, we have
$\displaystyle \int\limits_1^{+\infty}\!\frac{\alpha (t)}{t^2}\ln \frac t{\beta (t)}\;\!{\rm d}t =+\infty\;\! .$
\end{itemize}
The above two conditions imply the condition
\begin{itemize}
\item[(iii)] $\displaystyle \int\limits_e^{+\infty}\!\frac{\alpha (t)}{t^2} \ln\ln (t)\;\!{\rm d}t=+\infty$
\end{itemize}
and, if $\alpha$ is also subadditive or $\alpha =\ln |\omega (\;\!\cdot\;\! )|$ with $\omega\in
\mathbf{\Omega}_0\;\!$, then all the above three conditions are equivalent.
\end{proposition}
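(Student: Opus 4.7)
The plan is to treat the two labeled equivalences separately. The implication (ii)$\Rightarrow$(i) is immediate by specialization: the function $\beta=\alpha$ is itself increasing with $\int_1^{+\infty}\alpha(t)/t^2\,{\rm d}t<+\infty$, and substituting $\beta=\alpha$ in the integral of (ii) gives exactly the integral of (i). The entire substance lies in the reverse implication.

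For (i)$\Rightarrow$(ii), fix an increasing $\beta:(0,+\infty)\longrightarrow(0,+\infty)$ with $\int_1^{+\infty}\beta(t)/t^2\,{\rm d}t<+\infty$. By the necessary condition of Theorem \ref{increasing}, both $\alpha(t)/t$ and $\beta(t)/t$ tend to $0$, so I would choose $T\geq 1$ with $\alpha(t),\beta(t)\leq t/e$ on $[T,+\infty)$; the parts of every integral over $[1,T]$ are finite. Split $[T,+\infty)$ into $E_1=\{t\geq T:\alpha(t)\leq\beta(t)\}$ and $E_2=\{t\geq T:\alpha(t)>\beta(t)\}$. On $E_2$ the pointwise inequality $\ln(t/\beta(t))\geq\ln(t/\alpha(t))$ gives the desired lower bound directly. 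On $E_1$ the natural inequality runs the wrong way, and this is the main obstacle. It is overcome by the decomposition
\[ \ln\frac{t}{\alpha(t)}=\ln\frac{t}{\beta(t)}+\ln\frac{\beta(t)}{\alpha(t)}\]
combined with the elementary bound $\ln u\leq u-1$ applied at $u=\beta(t)/\alpha(t)\geq 1$, which yields $\alpha(t)\ln(\beta(t)/\alpha(t))\leq \beta(t)-\alpha(t)\leq \beta(t)$. Hence
\[ \int_{E_1}\frac{\alpha(t)}{t^2}\ln\frac{\beta(t)}{\alpha(t)}\,{\rm d}t\leq\int_T^{+\infty}\frac{\beta(t)}{t^2}\,{\rm d}t<+\infty ,\]
and adding the estimates on $E_1$ and $E_2$ produces
\[ \int_T^{+\infty}\frac{\alpha(t)}{t^2}\ln\frac{t}{\beta(t)}\,{\rm d}t\geq \int_T^{+\infty}\frac{\alpha(t)}{t^2}\ln\frac{t}{\alpha(t)}\,{\rm d}t-\int_T^{+\infty}\frac{\beta(t)}{t^2}\,{\rm d}t=+\infty ,\]
which is (ii).

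To deduce (iii) from (ii), I would specialize to a $\beta$ for which $\ln(t/\beta(t))$ is essentially $\ln\ln t$: take $\beta(t):=t/(\ln t)^2$ for $t\geq e^3$ (where this function is strictly increasing and positive) and extend it as the constant $e^3/9$ on $(0,e^3]$, so $\beta$ is increasing on $(0,+\infty)$ and $\int_{e^3}^{+\infty}\beta(t)/t^2\,{\rm d}t=\int_{e^3}^{+\infty}{\rm d}t/(t(\ln t)^2)<+\infty$. Since $\ln(t/\beta(t))=2\ln\ln t$ on $[e^3,+\infty)$, (ii) immediately forces (iii).

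Finally, for (iii)$\Rightarrow$(i) under the hypothesis that $\alpha$ is subadditive or $\alpha=\ln|\omega(\,\cdot\,)|$ with $\omega\in\mathbf{\Omega}_0$, it is enough to establish that $\alpha(t)\ln t/t\to 0$ as $t\to+\infty$; indeed, this forces $\alpha(t)\leq t/\ln t$ eventually, so $\ln(t/\alpha(t))\geq\ln\ln t$ there, and (iii) upgrades to (i). In the subadditive case the asymptotic is exactly the necessary condition (\ref{nec.subadd}) of Theorem \ref{concave}. In the case $\alpha=\ln|\omega|$ with $\omega\in\mathbf{\Omega}_0$, Lemma \ref{concave-explicite} and the explicit construction already used in the proof of Theorem \ref{min} provide an infinitely differentiable concave majorant $\widetilde\alpha\geq\alpha$ with $\int_1^{+\infty}\widetilde\alpha(t)/t^2\,{\rm d}t<+\infty$; by the argument recalled before Lemma \ref{concave-explicite}, concavity forces $\widetilde\alpha(t)/t$ to be decreasing, hence $\widetilde\alpha$ is subadditive, so Theorem \ref{concave} yields $\widetilde\alpha(t)\ln t/t\to 0$, whence $\alpha(t)\ln t/t\to 0$ as well.
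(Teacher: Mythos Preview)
Your proof is correct and follows essentially the same route as the paper: the key step (i)$\Rightarrow$(ii) in both arguments rests on the elementary bound $\ln u\leq u$ (you use $\ln u\leq u-1$ on $E_1$, the paper applies it uniformly to $u=(\alpha+\beta)/\alpha$ without a case split), (ii)$\Rightarrow$(iii) is the same specialization $\beta(t)\sim t/(\ln t)^2$, and (iii)$\Rightarrow$(i) is the same reduction to $\alpha(t)\ln t/t\to 0$ via Theorem~\ref{concave}. The only cosmetic difference is that for $\alpha=\ln|\omega|$ with $\omega\in\mathbf{\Omega}_0$ you build a concave majorant first, whereas the paper invokes Theorem~\ref{concave}\,(vii) directly, since $\alpha$ itself already satisfies that condition.
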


\begin{proof}
Implication (ii)$\Rightarrow$(i) is trivial. For (i)$\Rightarrow$(ii) : since
\medskip

\noindent\hspace{0.6 mm}$\displaystyle
\frac{\alpha (t)}{t^2}\ln \frac t{\beta (t)}\geq \frac{\alpha (t)}{t^2}\ln \frac t{\alpha (t)+\beta (t)}
=\frac{\alpha (t)}{t^2}\ln \frac t{\alpha (t)}-\frac{\alpha (t)}{t^2}\ln \frac{\alpha (t)+\beta (t)}{\alpha (t)}$

\noindent\hspace{21.4 mm}$\displaystyle
\geq\frac{\alpha (t)}{t^2}\ln \frac t{\alpha (t)}-\frac{\alpha (t)}{t^2}\frac{\alpha (t)+\beta (t)}{\alpha (t)}
=\frac{\alpha (t)}{t^2}\ln \frac t{\alpha (t)}-\frac{\alpha (t)+\beta (t)}{t^2}\;\! ,$
\smallskip

\noindent we have
\begin{equation*}
\int\limits_1^{+\infty}\!\frac{\alpha (t)}{t^2}\ln \frac t{\beta (t)}\;\!{\rm d}t\geq
\int\limits_1^{+\infty}\!\frac{\alpha (t)}{t^2}\ln \frac t{\alpha (t)}\;\!{\rm d}t -\!
\int\limits_1^{+\infty}\!\frac{\alpha (t)+\beta (t)}{t^2}\;\!{\rm d}t =+\infty\;\! .
\end{equation*}

(ii)$\Rightarrow$(iii) follows by applying (ii) to
\begin{equation*}
\beta (t)=\begin{cases}
\;\displaystyle \frac t{\;\! (\ln t)^2} &\!\!\! \text{for }\, t\geq e^2\, , \\
\;\displaystyle\hspace{4.2 mm} \frac t4 &\!\!\! \text{for }\, 0<t<e^2\, .
\end{cases}
\end{equation*}

Finally we prove that, if $\alpha$ is also subadditive or $\alpha =\ln |\omega (\;\!\cdot\;\! )|$ with

\noindent $\omega\in\mathbf{\Omega}_0\;\!$, then (iii)$\Rightarrow$(i).
For we recall that, according to Theorem \ref{concave},

\noindent $\displaystyle \lim\limits_{t\to +\infty}\frac{\;\! \alpha (t) \ln t}t=0\;\!$. Consequently
there exists some $t_0\geq e$ such that

\noindent $\displaystyle \frac{\;\! \alpha (t) \ln t}t<1\Leftrightarrow\frac t{\alpha (t)}>\ln t$ for $t\geq t_0\;\!$.
We deduce$\;\! :$

\centerline{$\displaystyle \int\limits_{t_0}^{+\infty}\!\frac{\alpha (t)}{t^2}\ln \frac t{\alpha (t)}\;\!{\rm d}t
\geq \int\limits_{t_0}^{+\infty}\!\frac{\alpha (t)}{t^2}\ln \ln (t)\;\!{\rm d}t =+\infty\;\! .$}

\end{proof}

The non-quasianalyticity and mild strong non-quasianalyticity conditions for increasing functions
can be rewritten in discretized form$\;\! :$

\begin{proposition}\label{msnq-discrete}
Let $\alpha : (0\;\! ,+\infty )\longrightarrow (0\;\! ,+\infty )$ be an increasing function.
\begin{itemize}
\item[(i)] $\displaystyle \int\limits_1^{+\infty}\!\frac{\alpha (t)}{t^2}\;\!{\rm d}t<+\infty$ if and only if
$\;\displaystyle \sum\limits_{j=1}^{\infty}\frac{\alpha (2^j)}{2^j}<+\infty\;\!$.
\item[(ii)] Assuming that $\displaystyle \int\limits_1^{+\infty}\!\frac{\alpha (t)}{t^2}\;\!{\rm d}t<+\infty\;\!$,
we have $\displaystyle \int\limits_1^{+\infty}\!\frac{\alpha (t)}{t^2}\ln \frac t{\alpha (t)}\;\!{\rm d}t=+\infty$
if and only if $\;\displaystyle \sum\limits_{j=1}^{\infty}\frac{\alpha (2^j)}{2^j}\ln \frac{2^j}{\beta (2^j)}
=+\infty$ for any increasing function $\beta : (0\;\! ,+\infty )\longrightarrow (0\;\! ,+\infty )$ satisfying
$\displaystyle \int\limits_1^{+\infty}\!\frac{\beta (t)}{t^2}\;\!{\rm d}t<+\infty\;\!$.
\item[(iii)] $\displaystyle \int\limits_e^{+\infty}\!\frac{\alpha (t)}{t^2} \ln\ln (t)\;\!{\rm d}t<+\infty$ if and
only if $\;\displaystyle \sum\limits_{j=1}^{\infty}\frac{\alpha (2^j)}{2^j}\ln j<+\infty\;\!$.
\end{itemize}
\end{proposition}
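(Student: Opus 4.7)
My plan is to prove each of (i), (ii), (iii) by Cauchy condensation: decompose $\int_1^{+\infty}(\cdot)\,dt$ as $\sum_j \int_{2^j}^{2^{j+1}}(\cdot)\,dt$ and sandwich each dyadic piece using monotonicity of $\alpha$ together with $\int_{2^j}^{2^{j+1}} dt/t^2 = 1/2^{j+1}$. For (i), on $[2^j,2^{j+1}]$ one has $\alpha(2^j)\leq\alpha(t)\leq\alpha(2^{j+1})$, giving $\alpha(2^j)/2^{j+1} \leq \int_{2^j}^{2^{j+1}}\alpha(t)/t^2\,dt \leq \alpha(2^{j+1})/2^{j+1}$, and summation with a one-step reindexing yields the stated equivalence. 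Statement (iii) follows the same template: on $[2^j,2^{j+1}]$ with $j$ large, the extra factor $\ln\ln t$ is squeezed between $\ln(j\ln 2)$ and $\ln((j{+}1)\ln 2)$, both asymptotic to $\ln j$, so the bounded multiplicative distortion does not affect convergence.

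Item (ii) is more delicate, and I handle it in two steps. First, for any fixed increasing $\beta$ with $\int\beta/t^2\,dt<+\infty$, Theorem \ref{increasing} gives $\beta(t)/t\to 0$, so $\ln(t/\beta(t))>0$ eventually, and a direct estimate on $[2^j,2^{j+1}]$ produces
\[
\frac{\alpha(2^j)}{2^{j+1}}\,\ln\frac{2^j}{\beta(2^{j+1})} \leq \int_{2^j}^{2^{j+1}}\!\frac{\alpha(t)}{t^2}\,\ln\frac{t}{\beta(t)}\,dt \leq \frac{\alpha(2^{j+1})}{2^{j+1}}\,\ln\frac{2^{j+1}}{\beta(2^j)}\,.
\]
Summing yields expressions with shifted $\beta(2^{j\pm 1})$, which I reinterpret via $\gamma(t):=\beta(t/2)$ and $\delta(t):=\beta(2t)$. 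A one-line change of variable confirms these remain admissible (increasing, with finite $\int\gamma/t^2$ and $\int\delta/t^2$), and by construction $\gamma(2^{k})=\beta(2^{k-1})$, $\delta(2^j)=\beta(2^{j+1})$. Writing $I(\beta):=\int_1^{+\infty}\alpha(t)/t^2\,\ln(t/\beta(t))\,dt$ and $S(\beta):=\sum_j\alpha(2^j)/2^j\cdot\ln(2^j/\beta(2^j))$, the sandwich becomes $\tfrac{1}{2}\,S(\delta)\leq I(\beta)\leq S(\gamma)$ modulo finitely many bounded boundary terms.

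Finally, Proposition \ref{msnq-charact} (i)$\Leftrightarrow$(ii) identifies $\int\alpha/t^2\,\ln(t/\alpha)\,dt=+\infty$ with $I(\beta)=+\infty$ for every admissible $\beta$. Chained with the sandwich, and using that $\beta\mapsto\gamma$ and $\beta\mapsto\delta$ are surjective on the admissible class (their inverses are $\gamma\mapsto\gamma(2\cdot)$ and $\delta\mapsto\delta(\cdot/2)$), this is equivalent to $S(\beta)=+\infty$ for every admissible $\beta$, which is precisely the right-hand side of (ii). The main obstacle is this surjectivity-based absorption of the index shifts: the naive sandwich pairs $\alpha(2^{j+1})$ with $\beta(2^j)$ on one side and $\alpha(2^j)$ with $\beta(2^{j+1})$ on the other, and these asymmetric expressions cannot be compared termwise; they align only after being reinterpreted as $S(\gamma)$ and $S(\delta)$ and combined with the universal quantifier over $\beta$.
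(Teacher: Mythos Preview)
Your proof is correct and follows essentially the same route as the paper: Cauchy condensation on dyadic intervals for (i) and (iii), and for (ii) the same combination of Proposition~\ref{msnq-charact} with the shift trick $\beta\mapsto\beta(2\,\cdot\,)$ (respectively $\beta(\cdot/2)$) to absorb the index mismatch. The only cosmetic difference is that the paper handles each direction of (ii) with a single explicit choice of shifted $\beta$ (in particular taking $\beta=\alpha(2\,\cdot\,)$ for the backward direction, bypassing a second appeal to Proposition~\ref{msnq-charact}), whereas you package both directions uniformly via the surjectivity of the shift on the admissible class.
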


\begin{proof}
(i) follows by noticing that
\smallskip

\centerline{$\displaystyle
\int\limits_{2^j}^{2^{j+1}}\! \frac{\alpha (t)}{t^2}\;\!{\rm d}t\leq \alpha (2^{j+1})\!
\int\limits_{2^j}^{2^{j+1}}\! \frac 1{t^2}\;\!{\rm d}t =\frac{\;\!\alpha (2^{j+1})}{2^{j+1}}$}
\smallskip

\noindent and
\smallskip

\centerline{$\displaystyle \int\limits_{2^j}^{2^{j+1}}\! \frac{\alpha (t)}{t^2}\;\!{\rm d}t\geq \alpha (2^j)\!
\int\limits_{2^j}^{2^{j+1}}\! \frac 1{t^2}\;\!{\rm d}t =\frac{\;\!\alpha (2^j)}{2^{j+1}}\;\!$.}

Let us now assume that $\displaystyle \int\limits_1^{+\infty}\!\frac{\alpha (t)}{t^2}\;\!{\rm d}t<+\infty$
and $\displaystyle \int\limits_1^{+\infty}\!\frac{\alpha (t)}{t^2}\ln \frac t{\alpha (t)}\;\!{\rm d}t=+\infty\;\!$.
Let further $\beta : (0\;\! ,+\infty )\longrightarrow (0\;\! ,+\infty )$ be an arbitrary increasing function.

\noindent such that $\displaystyle \int\limits_1^{+\infty}\!\frac{\beta (t)}{t^2}\;\!{\rm d}t<+\infty\;\!$.
Then also $\beta (2\,\cdot\;\! )$ is increasing and such that
$\displaystyle \int\limits_1^{+\infty}\!\frac{\beta (2t)}{t^2}\;\!{\rm d}t<+\infty\;\!$,
so Proposition \ref{msnq-charact} yields $\displaystyle  \int\limits_1^{+\infty}\!
\frac{\alpha (t)}{t^2}\ln \frac t{\beta (2t)}\;\!{\rm d}t=+\infty\;\!$.
\smallskip

\noindent Since
\smallskip

\centerline{$\displaystyle \int\limits_{2^j}^{2^{j+1}}\! \frac{\alpha (t)}{t^2}\ln \frac t{\beta (2t)}\;\!{\rm d}t\leq
\alpha (2^{j+1})\ln \frac{2^{j+1}}{\beta (2\cdot 2^j)}\int\limits_{2^j}^{2^{j+1}}\! \frac 1{t^2}\;\!{\rm d}t=
\frac{\alpha (2^{j+1})}{2^{j+1}}\ln \frac{2^{j+1}}{\beta (2^{j+1})}\;\! ,$}
\smallskip

\noindent we obtain
\smallskip

\centerline{$\displaystyle \sum\limits_{j=1}^{\infty}\frac{\alpha (2^j)}{2^j}\ln \frac{2^j}{\beta (2^j)}
\geq  \int\limits_1^{+\infty}\!\frac{\alpha (t)}{t^2}\ln \frac t{\beta (2t)}\;\!{\rm d}t=+\infty\;\! .$}
\smallskip

Assume now that
\smallskip

\centerline{$\displaystyle \int\limits_1^{+\infty}\!\frac{\alpha (t)}{t^2}\;\!{\rm d}t<+\infty$
and $\;\displaystyle \sum\limits_{j=1}^{\infty}\frac{\alpha (2^j)}{2^j}\ln \frac{2^j}{\beta (2^j)}
=+\infty$}
\smallskip

\noindent for any increasing function $\beta : (0\;\! ,+\infty )\longrightarrow (0\;\! ,+\infty )$ with
$\displaystyle \int\limits_1^{+\infty}\!\frac{\beta (t)}{t^2}\;\!{\rm d}t<+\infty\;\!$.
Since $\alpha (2\,\cdot\;\! )$ is such a function, we have
$\;\displaystyle \sum\limits_{j=1}^{\infty}\frac{\alpha (2^j)}{2^j}\ln \frac{2^j}{\alpha (2^{j+1})}=+\infty\;\!$.
Since
\smallskip

\centerline{$\displaystyle
\int\limits_{2^j}^{2^{j+1}}\! \frac{\alpha (t)}{t^2}\ln \frac t{\alpha (t)}\;\!{\rm d}t\geq \alpha (2^j)
\ln \frac{2^j}{\alpha (2^{j+1})}\int\limits_{2^j}^{2^{j+1}}\! \frac 1{t^2}\;\!{\rm d}t =
\frac{\alpha (2^j)}{2^{j+1}}\ln \frac{2^j}{\alpha (2^{j+1})}\;\! ,$}
\smallskip

\noindent we deduce $\;\displaystyle \int\limits_2^{+\infty}\! \frac{\alpha (t)}{t^2}\ln \frac t{\alpha (t)}\;\!{\rm d}t
\geq\frac 12\;\! \sum\limits_{j=1}^{\infty}\frac{\alpha (2^j)}{2^j}\ln \frac{2^j}{\alpha (2^{j+1})}=+\infty\;\!$.
\smallskip

Finally, (iii) follows by using the estimations
\smallskip

\noindent\hspace{3 mm}$\displaystyle
\int\limits_{2^j}^{2^{j+1}}\! \frac{\alpha (t)}{t^2} \ln\ln (t)\;\!{\rm d}t\leq \alpha (2^{j+1})
\ln \ln (2^{j+1})\!\int\limits_{2^j}^{2^{j+1}}\! \frac 1{t^2}\;\!{\rm d}t \leq \frac{\alpha (2^{j+1})}{2^{j+1}}
\ln (j+1)\;\! ,$

\noindent\hspace{3 mm}$\displaystyle
\int\limits_{2^j}^{2^{j+1}}\! \frac{\alpha (t)}{t^2} \ln\ln (t)\;\!{\rm d}t\geq \alpha (2^j)\ln \ln (2^j)\!
\int\limits_{2^j}^{2^{j+1}}\! \frac 1{t^2}\;\!{\rm d}t\geq \frac{\;\!\alpha (2^j)}{2^{j+1}}\;\! \frac{\;\!\ln j}2
=\frac 14\;\! \frac{\alpha (2^j)}{2^j} \ln j\;\! ,$
\smallskip

\noindent the second one, in which $\displaystyle \ln\ln (2^j)\geq \frac{\;\!\ln j}2$ was used,
valid only for $j\geq 3\;\!$.

\end{proof}

The condition for the sequence $\big(\alpha (2^j)\big)_{j\geq 1}\;\!$, formulated in Proposition
\ref{msnq-discrete} to characterize the negation of the mild strong non-quasianalyticity for an increasing
$\alpha : (0\;\! ,+\infty )\longrightarrow (0\;\! ,+\infty )$ satisfying the non-quasianalyticity condition,
has an important permanence property which will be used in the next section to prove Theorem
\ref{no-min}$\;\! :$

\begin{proposition}\label{msnq-permanence2}
Let $\big( a_j\big)_{j\geq 1}$ be a sequence in $[\;\! 0\;\! ,+\infty )$ such that
\begin{equation*}
\sum\limits_{j=1}^{\infty}\frac{a_j}{2^j}<+\infty
\end{equation*}
and
\smallskip

\centerline{$\displaystyle \sum\limits_{j=1}^{\infty}\frac{a_j}{2^j}\ln \frac{2^j}{\beta (2^j)} =+\infty$}

\noindent for any increasing function $\beta : (0\;\! ,+\infty )\longrightarrow (0\;\! ,+\infty )$ with
$\displaystyle \int\limits_1^{+\infty}\!\frac{\beta (t)}{t^2}\;\!{\rm d}t<+\infty\;\!$.
Then the sequence $\big( (a_{j+1}-a_j)^+\big)_{j\geq 1}\;\!$, where
\smallskip

\centerline{$\displaystyle \lambda^+=\begin{cases}
\; \lambda &\!\!\! \text{for }\, \lambda\geq 0\;\! , \\
\; 0 &\!\!\! \text{for }\, \lambda < 0\;\! ,
\end{cases}$}
\smallskip

\noindent has the same two properties.

\end{proposition}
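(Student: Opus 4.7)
The plan is a proof by contraposition. The first property is immediate: since $b_j=(a_{j+1}-a_j)^+\leq a_{j+1}$, one has
\[
\sum_{j\geq 1}\frac{b_j}{2^j}\leq \sum_{j\geq 1}\frac{a_{j+1}}{2^j}=2\sum_{k\geq 2}\frac{a_k}{2^k}<+\infty.
\]
For the second property, I would assume there is an increasing $\beta\colon(0,+\infty)\to(0,+\infty)$ with $\int_1^{+\infty}\!\beta(t)/t^2\,{\rm d}t<+\infty$ such that $\sum_{j\geq 1}\frac{b_j}{2^j}\ln\frac{2^j}{\beta(2^j)}<+\infty$, and then derive the same kind of convergence with $(b_j)$ replaced by $(a_j)$ and the \emph{same} $\beta$, which contradicts the hypothesis on $(a_j)$.

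The key tool is the telescoping estimate $a_j\leq a_1+\sum_{k=1}^{j-1}b_k$, which is immediate from $a_{k+1}-a_k\leq (a_{k+1}-a_k)^+=b_k$. Substituting and interchanging the order of summation gives
\[
\sum_{j\geq 2}\frac{a_j}{2^j}\ln\frac{2^j}{\beta(2^j)}\leq a_1\sum_{j\geq 2}\frac{1}{2^j}\ln\frac{2^j}{\beta(2^j)}+\sum_{k\geq 1}b_k\sum_{j>k}\frac{1}{2^j}\ln\frac{2^j}{\beta(2^j)}.
\]
To estimate the inner tail, I would use that $\beta$ is increasing, so for $j>k$
\[
\ln\frac{2^j}{\beta(2^j)}\leq\ln\frac{2^j}{\beta(2^k)}=\ln\frac{2^k}{\beta(2^k)}+(j-k)\ln 2,
\]
and then the elementary identities $\sum_{j>k}2^{-j}=2^{-k}$ and $\sum_{j>k}(j-k)\,2^{-j}=2^{1-k}$ give the crucial bound
\[
\sum_{j>k}\frac{1}{2^j}\ln\frac{2^j}{\beta(2^j)}\leq\frac{1}{2^k}\ln\frac{2^k}{\beta(2^k)}+\frac{2\ln 2}{2^k}.
\]
Plugging this back in yields
\[
\sum_{j\geq 2}\frac{a_j}{2^j}\ln\frac{2^j}{\beta(2^j)}\leq C+\sum_{k\geq 1}\frac{b_k}{2^k}\ln\frac{2^k}{\beta(2^k)}+2\ln 2\cdot\!\sum_{k\geq 1}\frac{b_k}{2^k}<+\infty,
\]
the desired contradiction.

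The main obstacle is precisely the tail estimate: it is what allows the logarithmic factor indexed by $j$ to be transferred cleanly to the index $k$ that multiplies $b_k$, so that the resulting double sum collapses to the one assumed convergent. Monotonicity of $\beta$ is essential here—without it the inner sum over $j$ could not be bounded by its $k$-th term. A minor technical point to address is that $\ln(2^j/\beta(2^j))$ can be negative for a few small $j$, but by the necessary condition $\beta(t)/t\to 0$ from Theorem \ref{increasing}, this happens only for $j$ up to some finite index, and those finitely many contributions are an absolute constant that can be absorbed into $C$; on the rest of the range the terms are non-negative, so the interchange of summation is legitimate by Tonelli.
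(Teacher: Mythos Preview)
Your argument is correct. The handling of the finitely many indices where $\ln\bigl(2^j/\beta(2^j)\bigr)$ may be negative is exactly right: by Theorem~\ref{increasing} one has $\beta(t)/t\to 0$, so from some $j_0$ on the logarithms are nonnegative, Tonelli applies on $j\geq j_0$, and the remaining terms are a fixed finite constant.

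The paper, however, proceeds differently. Instead of the contrapositive, it argues directly: for an arbitrary admissible $\beta$ and $n\geq n_0$ (where $n_0$ is chosen so that $\beta(2^j)\leq 2^j$ for $j\geq n_0$) it uses $(a_{j+1}-a_j)^+\geq a_{j+1}-a_j$ and then an Abel summation,
\[
\sum_{j=n_0}^{n}\frac{a_{j+1}-a_j}{2^j}\,c_j
=-\frac{a_{n_0}}{2^{n_0}}\,c_{n_0}
+\sum_{j=n_0+1}^{n}\frac{a_j}{2^j}\,(2c_{j-1}-c_j)
+\frac{a_{n+1}}{2^{n}}\,c_n,
\qquad c_j:=\ln\frac{2^j}{\beta(2^j)},
\]
together with the monotonicity of $\beta$ to get $2c_{j-1}-c_j\geq c_j-\ln 4$; the divergence of $\sum_j\frac{a_j}{2^j}c_j$ then forces the left-hand side to diverge. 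So the paper uses summation by parts on the \emph{differences}, whereas you use the telescoping bound $a_j\leq a_1+\sum_{k<j}b_k$ on the \emph{original} sequence and then Fubini plus a geometric tail estimate. The two computations are in a sense dual; both pivot on the monotonicity of $\beta$, which in the paper yields $2c_{j-1}-c_j\geq c_j-\ln 4$ and in your proof yields the tail bound $\sum_{j>k}2^{-j}c_j\leq 2^{-k}c_k+2^{1-k}\ln 2$. Your route is slightly longer but arguably more transparent about \emph{why} the property passes to the positive increments, while the paper's Abel summation is a touch more compact.
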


\begin{proof}
First of all,
\smallskip

\centerline{$\displaystyle \sum\limits_{j=1}^{\infty}\frac{\;\! (a_{j+1}-a_j)^+}{2^j}\leq
\sum\limits_{j=1}^{\infty}\frac{\;\! a_{j+1}+a_j}{2^j}\leq 2\sum\limits_{j=1}^{\infty}\frac{a_j}{2^j}<+\infty\;\! .$}
\smallskip

Now let $\beta : (0\;\! ,+\infty )\longrightarrow (0\;\! ,+\infty )$ be any increasing function satisfying

\noindent $\displaystyle \int\limits_1^{+\infty}\!\frac{\beta (t)}{t^2}\;\!{\rm d}t<+\infty\;\!$.
By Theorem \ref{increasing} $\displaystyle \lim\limits_{t\to +\infty}\frac{\beta (t)}t=0\;\!$,
so there is an integer $n_0\geq 1$ such that $\beta (2^n)\leq 2^n$ for $n\geq n_0\;\!$.

For each $n>n_0\;\!$,
\smallskip

\noindent\hspace{6.3 mm}$\displaystyle
\sum\limits_{j=n_0}^{n}\frac{\;\! (a_{j+1}-a_j)^+}{2^j}\ln \frac{2^j}{\beta (2^j)}\geq
\sum\limits_{j=n_0}^{n}\frac{\;\! a_{j+1}-a_j}{2^j}\ln \frac{2^j}{\beta (2^j)}$

\noindent\hspace{1.35 mm}$\displaystyle =
-\;\! \frac{\;\! a_{n_0}}{2^{n_0}} \ln \frac{2^{n_0}}{\beta (2^{n_0})}
+\sum\limits_{j=n_0+1}^n \frac{\;\! a_j}{2^j}\Big( 2 \ln \frac{2^{j-1}}{\beta (2^{j-1})}-\ln \frac{2^j}{\beta (2^j)}\Big)
+\frac{\;\! a_{n+1}}{2^n} \ln \frac{2^n}{\beta (2^n)}$

\noindent\hspace{1.35 mm}$\displaystyle \geq
-\;\! \frac{\;\! a_{n_0}}{2^{n_0}} \ln \frac{2^{n_0}}{\beta (2^{n_0})}
+\sum\limits_{j=n_0+1}^n \frac{\;\! a_j}{2^j}\ln \Big(\frac{2^{2j-2}}{\beta (2^{j-1})^2}\cdot \frac{\beta (2^j)}{2^j}\Big)$

\noindent\hspace{1.35 mm}$\displaystyle \geq
-\;\! \frac{\;\! a_{n_0}}{2^{n_0}} \ln \frac{2^{n_0}}{\beta (2^{n_0})}
+\sum\limits_{j=n_0+1}^n \frac{\;\! a_j}{2^j}\ln \frac{2^{j-2}}{\beta (2^j)}\;\! ,$

\noindent so
\medskip

\noindent\hspace{1.5 cm}$\displaystyle
\sum\limits_{j=n_0}^{n}\frac{\;\! (a_{j+1}-a_j)^+}{2^j}\ln \frac{2^j}{\beta (2^j)}$

\noindent\hspace{1 cm}$\displaystyle \geq\,
-\;\! \frac{\;\! a_{n_0}}{2^{n_0}} \ln \frac{2^{n_0}}{\beta (2^{n_0})}
-\! \sum\limits_{j=n_0+1}^n \frac{\;\! a_j}{2^j} \ln 4
+\! \sum\limits_{j=n_0+1}^n \frac{\;\! a_j}{2^j} \ln \frac{2^j}{\beta (2^j)}=+\infty\;\! .$

\end{proof}

We denote

\centerline{$\displaystyle \ln^+t:=\begin{cases}
\; \ln t &\!\!\! \text{for }\, t>0\ \\
\;\hspace{1.8 mm} 0 &\!\!\! \text{for }\, t\leq 0
\end{cases}\; .$}
\medskip

\noindent The next lemma completes Proposition \ref{criteria1}$\;\! :$

\begin{lemma}\label{criteria2}
For $\displaystyle 0<t_1\leq t_2\leq t_3\leq\;\! ...\;\! \leq +\infty\, ,\, t_1<+\infty\;\!$,
are equivalent$\;\! :$
\begin{itemize}
\item[(i)] $\,\displaystyle \sum\limits_{j=2}^{\infty}\frac{\ln\ln j}{t_j}<+\infty\;\!$;
\item[(ii)] $\,\displaystyle \lim\limits_{j\to\infty}t_j=+\infty$ and
$\,\displaystyle \sum\limits_{j=1}^{\infty}\frac{\;\!\ln^+\ln t_j}{t_j}<+\infty\;\!$.
\end{itemize}
Moreover, $({\rm i})$ and $({\rm ii})$ imply
\begin{itemize}
\item[(iii)] $\,\displaystyle \lim\limits_{j\to\infty}\frac{t_j}j=+\infty$ and $\,\displaystyle
\sum\limits_{j=1}^{\infty}\frac{\;\!\displaystyle \ln^+ \frac{t_j}j}{t_j}<+\infty\;\!$,
\end{itemize}
and, if $\displaystyle 0<t_1\leq \frac{t_2}2\leq \frac{t_3}3\leq\;\! ...\;\!$, then $({\rm i})$, $({\rm ii})$
and $({\rm iii})$ are all equivalent.
\end{lemma}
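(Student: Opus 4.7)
The lemma is a conjunction of three statements: the equivalence $(\mathrm{i})\Leftrightarrow(\mathrm{ii})$, the implication $(\mathrm{i}),\,(\mathrm{ii})\Rightarrow(\mathrm{iii})$, and the strengthened equivalence under the hypothesis $t_{j+1}/(j+1)\geq t_j/j$. The opening observation, common to every part, is that since $\ln\ln j\geq 1$ for $j\geq e^e$ and $\ln^+\!\ln t_j\to+\infty$ once $t_j\to+\infty$, each of the three sums dominates $\sum 1/t_j$ on a cofinite tail, so $\sum_j 1/t_j<+\infty$. Together with $t_j$ non-decreasing this forces $t_j\to+\infty$, and the standard distribution-function argument (cf. Theorem~\ref{increasing} and Remark~\ref{explicite}) gives $t_j/j\to+\infty$; in particular $t_j\geq j$ for all $j$ sufficiently large.

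For $(\mathrm{i})\Rightarrow(\mathrm{ii})$ I partition the large indices into $J_1=\{t_j\leq j^2\}$ and $J_2=\{t_j>j^2\}$. On $J_1$ one has $\ln\ln t_j\leq\ln(2\ln j)\leq 2\ln\ln j$ for $j$ large, so the contribution is at most twice the sum in $(\mathrm{i})$. On $J_2$ the function $t\mapsto\ln t/t$ is decreasing on $[e,+\infty)$, hence $\ln\ln t_j/t_j\leq\ln t_j/t_j\leq 2\ln j/j^2$, which sums. The reverse implication $(\mathrm{ii})\Rightarrow(\mathrm{i})$ is immediate, since $t_j\geq j$ eventually gives $\ln\ln j\leq \ln\ln t_j$. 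For $(\mathrm{i}),\,(\mathrm{ii})\Rightarrow(\mathrm{iii})$ I apply the same strategy, three-way splitting the indices with $t_j\geq j$ according to the size of $t_j/j$: if $1\leq t_j/j\leq e$ then $\ln^+(t_j/j)\leq 1$ and the contribution is bounded by $\sum 1/t_j$; if $e\leq t_j/j\leq(\ln j)^3$ then $\ln(t_j/j)\leq 3\ln\ln j$ and the contribution is at most three times the sum in $(\mathrm{i})$; and if $t_j/j>(\ln j)^3$ then $\ln(t_j/j)/t_j\leq\ln t_j/t_j\leq (\ln j+3\ln\ln j)/(j(\ln j)^3)\leq 2/(j(\ln j)^2)$ by the same monotonicity, summable in $j$ alone.

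The delicate step, which I expect to be the main obstacle, is $(\mathrm{iii})\Rightarrow(\mathrm{i})$ under the extra hypothesis that $u_j:=t_j/j$ is non-decreasing. The plan is a dyadic decomposition: let $I_k:=\{j:2^k\leq u_j<2^{k+1}\}$, which by monotonicity of $u_j$ is an interval $[j_k,j_{k+1}-1]$ of consecutive integers. On $I_k$ one has $\ln u_j\geq k\ln 2$, $t_j\geq 2^kj$, and $\sum_{j\in I_k}1/j=\ln(j_{k+1}/j_k)+O(1/j_k)$. Hence $(\mathrm{iii})$ reads, up to a positive constant,
\[
\sum_k\frac{k}{2^k}\,\ln\frac{j_{k+1}}{j_k}<+\infty,
\]
which forces the individual bound $\ln(j_{k+1}/j_k)\leq C\,2^k/k$. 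A telescoping estimate then yields $\ln j_k\leq C'\,2^k/k$, whence $\ln\ln j_k=O(k)$. Applying this on each $I_k$ gives
\[
\sum_{j\in I_k}\frac{\ln\ln j}{t_j}\leq\frac{\ln\ln j_{k+1}}{2^k}\sum_{j\in I_k}\frac{1}{j}\leq \frac{O(k)}{2^k}\,\ln\frac{j_{k+1}}{j_k},
\]
and summing over $k$ dominates the convergent series from $(\mathrm{iii})$, establishing $(\mathrm{i})$. Making the telescoping bound on $\ln\ln j_k$ fully rigorous and handling the finitely many exceptional small indices is where the main technical care is needed.
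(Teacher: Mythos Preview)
Your arguments for $(\mathrm{i})\Leftrightarrow(\mathrm{ii})$ and $(\mathrm{i})\Rightarrow(\mathrm{iii})$ are essentially the paper's: the same threshold-splitting idea, with cosmetic differences in the cutoffs ($(\ln j)^3$ versus $(\ln j)^2$, and bounding via $\ln t/t$ rather than $\ln\ln t/t$). Your opening paragraph is slightly sloppy in phrasing---for $(\mathrm{iii})$ the divergence $t_j/j\to+\infty$ is part of the hypothesis, not something you deduce---but the logic is sound once one reads the three conditions correctly.

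The genuine divergence is in $(\mathrm{iii})\Rightarrow(\mathrm{i})$ under the extra monotonicity. The paper's proof is two lines: from $\sum 1/t_j<+\infty$ and $(t_j/j)$ increasing, it invokes \cite{C-Z1}, Lemma~1.5~(iii), to get $t_j/(j\ln j)\to+\infty$; then eventually $\ln j\le t_j/j$, so $\ln\ln j\le\ln^+(t_j/j)$ and the series comparison is immediate. Your dyadic decomposition is a correct self-contained substitute for that external lemma: the bound $\ln(j_{k+1}/j_k)\le C\,2^k/k$ from boundedness of the terms, the telescoping estimate $\ln j_{k+1}\le C'\,2^k/k$ (using that $\sum_{l\le k}2^l/l$ is dominated by a constant times its last term, since consecutive ratios exceed $4/3$ for $l\ge3$), and the resulting $\ln\ln j_{k+1}=O(k)$ all check out. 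What your route buys is independence from the cited lemma; what it costs is a page of bookkeeping (empty blocks $I_k$, the $O(1/j_k)$ error in $\sum_{j\in I_k}1/j$, the first few indices) that the paper avoids entirely. If you have access to the fact that $(u_j)$ increasing with $\sum 1/(ju_j)<\infty$ forces $u_j/\ln j\to\infty$, the paper's route is much cleaner.
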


\begin{proof}
First we prove that (i) implies (ii) and (iii).
\smallskip

Clearly, $t_j\longrightarrow +\infty\;\!$. For each $j\geq 3\;\!$, if $t_j\leq j^2$ then
\begin{equation*}
\frac{\ln^+\ln t_j}{t_j}\leq\frac{\ln\ln (j^2)}{t_j}=2\;\! \frac{\ln\ln j}{t_j}\;\! ,
\end{equation*}
while if $t_j>j\;\!$, then
\begin{equation*}
\frac{\ln^+\ln t_j}{t_j}=\frac{\ln\ln t_j}{t_j}<\frac{\ln\ln (j^2)}{j^2}=2\;\! \frac{\ln\ln j}{j^2}\;\! .
\end{equation*}
Therefore
\smallskip

\centerline{$\displaystyle
\sum\limits_{j=3}^{\infty}\frac{\ln^+\ln t_j}{t_j}\leq 2\;\! \sum\limits_{j=3}^{\infty}\;\!\frac{\ln\ln j}{t_j}+
2\;\! \sum\limits_{j=3}^{\infty}\;\! \frac{\ln\ln j}{j^2}
<+\infty\;\! .$}
\smallskip

On the other hand, since $\ln\ln j>1$ for all $j\geq 16\;\!$, we have
$\displaystyle \sum\limits_{j=1}^{\infty}\;\! \frac 1{t_j}<+\infty\;\!$.
Furthermore, for each $j\geq 6\;\!$, if $\displaystyle \frac{t_j}j\leq (\ln j)^2$ then
\smallskip

\centerline{$\displaystyle \frac{\;\!\displaystyle \ln^+ \frac{t_j}j}{t_j}\leq\frac{\;\! \ln (\ln j)^2}{t_j} =2\;\!
\frac{\;\!\ln\ln j}{t_j}\;\! ,$}
\smallskip

\noindent while if $\displaystyle \frac{t_j}j> (\ln j)^2>e\;\!$, then
\medskip

\centerline{$\displaystyle \frac{\;\!\displaystyle \ln^+ \frac{t_j}j}{\displaystyle \frac{t_j}j}=
\frac{\;\!\displaystyle \ln \frac{t_j}j}{\displaystyle \frac{t_j}j}<
\frac{\;\! \ln (\ln j)^2}{(\ln j)^2}=2\;\! \frac{\;\! \ln\ln j}{\;\! (\ln j)^2}\;\! ,\;\!\text{hence }
\frac{\;\!\displaystyle \ln^+ \frac{t_j}j}{t_j}<2\;\! \frac{\;\! \ln\ln j}{\;\! j\;\! (\ln j)^2}\;\! .$}
\medskip

\noindent We conclude that
\smallskip

\centerline{$\displaystyle
\sum\limits_{j=6}^{\infty}\frac{\;\!\displaystyle \ln^+ \frac{t_j}j}{\displaystyle \frac{t_j}j}\leq
2\;\! \sum\limits_{j=6}^{\infty}\frac{\;\!\ln\ln j}{t_j} +2\;\! \sum\limits_{j=6}^{\infty}
\frac{\;\! \ln\ln j}{\;\! j\;\! (\ln j)^2}<+\infty\;\! .$}
\smallskip

Next we prove implication (ii)$\,\Rightarrow$(i).

Since $t_j\longrightarrow +\infty\;\!$, we have eventually $\ln^+\ln t_j\geq 1\;\!$, hence
$\displaystyle \sum\limits_{j=1}^{\infty}\;\! \frac 1{t_j}<+\infty\;\!$.
Consequently, (see e.g. \cite{C-Z1}, Lemma 1.5 (ii)), $\displaystyle \lim\limits_{j\to\infty}
\frac{t_j}j=+\infty\;\!$. In particular, we have eventually $j\leq t_j$ and the convergence of
$\,\displaystyle \sum\limits_{j=2}^{\infty}\frac{\ln\ln j}{t_j}$ follows.

Finally we show that if $\displaystyle 0<t_1\leq \frac{t_2}2\leq \frac{t_3}3\leq\;\! ...\;\!$, then
(iii)$\,\Rightarrow$(i).

Since $\displaystyle \frac{t_j}j\longrightarrow +\infty\;\!$, we have eventually
$\displaystyle \ln^+ \frac{t_j}j\geq 1\;\!$, and thus $\displaystyle \sum\limits_{j=1}^{\infty}\;\! \frac 1{t_j}
<+\infty\;\!$. By \cite{C-Z1}, Lemma 1.5 (iii) it follows that $\displaystyle \frac{t_j}{\;\! j\ln j}\longrightarrow
+\infty\;\!$. In particular, we

\noindent have eventually $\displaystyle \ln j\leq\frac{t_j}j$ and the convergence of
$\,\displaystyle \sum\limits_{j=2}^{\infty}\frac{\ln\ln j}{t_j}$ follows.

\end{proof}

We end this section with a summary of several characterizations the mild strong non-quasianalyticity
condition for functions of the form $|\;\!\omega (\;\!\cdot\;\! )|$ with $\omega\in\mathbf{\Omega}\;\!$.

\begin{theorem}\label{msnq-omega}
For $\;\!\displaystyle 0<t_1\leq t_2\leq t_3\leq\;\! ...\;\! \leq +\infty\, ,\, t_1<+\infty\, ,\,
\sum\limits_{j=1}^{\infty}\frac 1{t_j}<+\infty\;\!$, let us denote
\begin{equation*}
\begin{split}
n(t)=\;&\#\{ k\geq 1 ; t_k\leq t\}\;\! ,\qquad t>0\;\! , \\
N(t)=\;&\ln \max \Big( 1\;\! ,\sup\limits_{k\geq 1}\frac{t^k}{t_1\;\! t_2\;\! ...\;\! t_k}\Big)\;\! ,\qquad t>0\;\! , \\
\omega (z)=\;&\prod\limits_{j=1}^{\infty}\Big( 1+\frac{iz}{t_j}\Big)\;\! ,z\in\mathbb{C}\;\! .
\end{split}
\end{equation*}

Then the following conditions are equivalent$\;\! :$
\begin{itemize}
\item[(i)] $\;\;\displaystyle \sum\limits_{j=1}^{\infty}\frac{\;\!\displaystyle \ln \frac{t_j}j}{t_j}<+\infty\;\! ;$
\item[(ii)] $\;\displaystyle \int\limits_1^{+\infty}\frac{n(t)|}{t^2} \ln \frac t{n(t)}
\;\!{\rm d}t<+\infty\;\!$;
\item[(iii)] $\;\displaystyle \int\limits_1^{+\infty}\frac{N(t)|}{t^2} \ln \frac t{N(t)}
\;\!{\rm d}t<+\infty\;\!$;
\item[(iv)] $\;\displaystyle \int\limits_1^{+\infty}\frac{\ln |\omega (t)|}{t^2} \ln \frac t{\ln |\omega (t)|}
\;\!{\rm d}t<+\infty\;\!$.
\end{itemize}
$($In the above conditions we take $\displaystyle 0\ln \frac 10=0$ when it occurs.$)$

The above conditions are implied by the next equivalent conditions$\;\! :$
\begin{itemize}
\item[(v)] $\;\;\displaystyle \sum\limits_{j=2}^{\infty}\frac{\;\!\ln\ln j}{t_j}<+\infty\;\! ;$
\item[(vi)] $\;\;\displaystyle \sum\limits_{j=2}^{\infty}\frac{\;\!\ln^+\ln t_j}{t_j}<+\infty\;\! .$
\end{itemize}

Finally, if $\displaystyle 0<t_1\leq \frac{t_2}2\leq \frac{t_3}3\leq\;\! ...\;\!$, then all the above
six conditions are

\noindent equivalent.
\end{theorem}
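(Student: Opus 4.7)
The plan is to split the six conditions into the cluster $\{({\rm i}),({\rm ii}),({\rm iii}),({\rm iv})\}$ and the cluster $\{({\rm v}),({\rm vi})\}$, prove the equivalences inside each cluster, and then use Lemma~\ref{criteria2} to link them. Throughout, Remark~\ref{explicite} ensures that the three functions $n(\,\cdot\,)$, $N(\,\cdot\,)$ and $\ln|\omega(\,\cdot\,)|$ are increasing and already satisfy the first (plain) non-quasianalyticity integral condition, so in (ii)--(iv) one only has to control the second, logarithmically weighted integral.

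For (ii)$\Longleftrightarrow$(iii)$\Longleftrightarrow$(iv) I would exhibit two-sided comparisons of the three functions, up to positive multiplicative constants and dilation of the argument, and then invoke Proposition~\ref{msnq-permanence1}(i)--(iii), which says that the mild strong non-quasianalyticity property is preserved under $\alpha\mapsto c\,\alpha$, $\alpha\mapsto \alpha(L\,\cdot\,)$, $\alpha\mapsto \alpha+\beta$ and $\alpha\mapsto \beta$ whenever $\beta\le\alpha$ is increasing. From $N(t)=\int_0^t n(s)/s\,ds$, integrating on $[t/e,t]$ and on $[t,2t]$ yields $n(t/e)\le N(t)$ and $n(t)\ln 2\le N(2t)$. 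The estimate $N(t)\le \ln|\omega(t)|$ is immediate from $\ln(1+t^2/t_j^2)\ge 2\ln(t/t_j)$ for $t_j\le t$, and the reverse bound $\ln|\omega(t)|\le \ln 12+2\,N(8t)$ follows by combining \cite{C-Z1}, Lemma~1.7, which gives $\ln|\omega(t)|\le \ln 3+2\ln\bigl(1+\sum_{k\ge 1}(4t)^k/(t_1\cdots t_k)\bigr)$, with the simple reweighting $\sum_{k\ge 0}s^k/(t_1\cdots t_k)\le 2\sup_{k\ge 0}(2s)^k/(t_1\cdots t_k)$ obtained from $1=\sum_{k\ge 0}2^{-k}/2$.

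To obtain (i)$\Longleftrightarrow$(ii), I compute the integral in (ii) as a series. On the interval $(t_k,t_{k+1})$ one has $n(t)=k$, and the substitution $u=t/k$ together with $\int u^{-2}\ln u\,du=-(\ln u+1)/u$ gives $\int_{t_k}^{t_{k+1}}(k/t^2)\ln(t/k)\,dt=k(\ln(t_k/k)+1)/t_k - k(\ln(t_{k+1}/k)+1)/t_{k+1}$. Summing over $k$ and reindexing the subtrahend produces a telescoped series whose $k$-th general term equals $(1/t_k)\bigl[\ln(t_k/k)+1-(k-1)\ln\bigl(k/(k-1)\bigr)\bigr]$. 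Expanding $(k-1)\ln\bigl(k/(k-1)\bigr)=1+O(1/k)$ shows that this term equals $\ln(t_k/k)/t_k+O(1/(k\,t_k))$; the error is summable because $1/(k\,t_k)\le 1/t_k$ and $\sum 1/t_k<+\infty$. Combined with the fact that $t_k/k\to+\infty$ (a consequence of $\sum 1/t_k<+\infty$, see \cite{C-Z1}, Lemma~1.5), so that only finitely many $\ln(t_k/k)$ can be negative, this gives (i)$\Longleftrightarrow$(ii). The principal subtlety of the whole proof is precisely the verification that the telescoping leaves an $O(1/(k\,t_k))$ remainder and not merely an $O(1/t_k)$ one.

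Finally, (v)$\Longleftrightarrow$(vi) and (v)$\Longrightarrow$(i) are restatements of, respectively, (i)$\Longleftrightarrow$(ii) and (i)$\Longrightarrow$(iii) of Lemma~\ref{criteria2}. Under the additional hypothesis $t_1\le t_2/2\le t_3/3\le\cdots$, the converse (iii)$\Longrightarrow$(i) of Lemma~\ref{criteria2} is also available, yielding (i)$\Longrightarrow$(v) of the present theorem and hence the full equivalence of all six conditions.
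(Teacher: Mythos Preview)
Your handling of the second cluster $\{({\rm v}),({\rm vi})\}$ and of the link between the two clusters via Lemma~\ref{criteria2} is correct and coincides with what the paper does. The paper itself does not prove the equivalence $({\rm i})\Leftrightarrow({\rm ii})\Leftrightarrow({\rm iii})\Leftrightarrow({\rm iv})$ at all: it simply cites \cite{C-Z2}, Lemma~2.1. So your direct argument is genuinely more informative, and your computation for $({\rm i})\Leftrightarrow({\rm ii})$ --- reducing the integral to the series $\sum t_k^{-1}\bigl[\ln(t_k/k)+1-(k-1)\ln(k/(k-1))\bigr]$ and checking that the bracket equals $\ln(t_k/k)+O(1/k)$ --- is correct and is indeed the delicate step.

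There is, however, a real gap in your $({\rm ii})\Leftrightarrow({\rm iii})$ argument. You announce \emph{two-sided} comparisons between $n$, $N$ and $\ln|\omega|$, but for the pair $(n,N)$ you only supply one side: both displayed inequalities, $n(t/e)\le N(t)$ and $n(t)\ln 2\le N(2t)$, say $n\lesssim N$ (up to dilation and constants). Via Proposition~\ref{msnq-permanence1} this yields $({\rm iii})\Rightarrow({\rm ii})$, not the converse. Since your $N$ versus $\ln|\omega|$ bounds are genuinely two-sided, you end up with $({\rm iii})\Leftrightarrow({\rm iv})\Rightarrow({\rm ii})\Leftrightarrow({\rm i})$, and the cycle is not closed: nothing in your write-up gives $({\rm i})$ or $({\rm ii})\Rightarrow({\rm iii})$. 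Note that a pointwise bound $N(t)\le c\,n(Lt)$ is in general \emph{false} (take finitely many $t_j$: then $n$ is eventually constant while $N(t)\sim n(\infty)\ln t$), so the missing direction cannot be obtained by the same comparison mechanism. You need a separate argument for $({\rm ii})\Rightarrow({\rm iii})$ --- for instance, a direct computation relating $\int N(t)t^{-2}\ln(t/N(t))\,dt$ to the series in $({\rm i})$, in the spirit of your $({\rm i})\Leftrightarrow({\rm ii})$ step, or an appeal to \cite{C-Z2}, Lemma~2.1 as the paper does.
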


\begin{proof}
Statement (i)$\Leftrightarrow$(ii)$\Leftrightarrow$(iii)$\Leftrightarrow$(iv) is
\cite{C-Z2}, Lemma 2.1, while (v)$\Leftrightarrow$(vi) and the relationship between
the above two groups of equivalent conditions is Lemma \ref{criteria2}.

\end{proof}

\section{Proof of the negative minimum modulus theorem}

In this section we provide a proof for Theorem \ref{no-min}, a negative minimum modulus theorem.
The idea of the proof, located in the proof of the next Lemma \ref{main-lemma}, is due to W. K. Hayman
(\cite{H}), while the technical execution is based upon the topics of Section 5.

\begin{lemma}\label{main-lemma}
Let $n_1\;\! ,n_2\;\!,\;\! ...\;\!\geq 0$ be integers such that
\begin{equation}\label{main-lemma-cond1}
\sum\limits_{j=1}^{\infty}\frac{n_j}{2^j}<+\infty
\end{equation}
and
\begin{equation}\label{main-lemma-cond2}
\sum\limits_{j=1}^{\infty}\frac{n_j}{2^j} \ln\frac{2^j}{\;\!\beta (2^j)}=+\infty
\end{equation}
for any increasing $\beta : (0\;\! ,+\infty )\longrightarrow (0\;\! ,+\infty )$ such that
$\displaystyle \int\limits_1^{+\infty}\!\frac{\beta (t)}{t^2}\;\!{\rm d}t<+\infty\;\!$.
The the formulas
\begin{equation*}
\omega_0(z)=\prod\limits_{j=1}^{\infty}\big( 1+\frac{iz}{2^j}\Big)^{\! n_j}\;\! ,\;
f(z)=\prod\limits_{j=1}^{\infty}\bigg( 1-\Big(\frac z{2^j}\Big)^{\! 2}\bigg)^{\! n_j},\qquad z\in\mathbb{C}
\end{equation*}
define a function $\omega_0\in\mathbf{\Omega}$ and an entire function $f$ with
\begin{equation}\label{main-lemma1}
|f(z)|\leq \big|\omega_0(|z|)\big|^2\;\! ,\qquad z\in\mathbb{C}\;\! ,
\end{equation}
such that there exists no increasing function $\beta : (0\;\! ,+\infty )\longrightarrow (0\;\! ,+\infty )$ with $\displaystyle \int\limits_1^{+\infty}\! \frac{\beta (t)}{t^2}\;\!{\rm d}t<+\infty$ satisfying
\begin{equation}\label{main-lemma2}
\sup\limits_{\substack{s\in\mathbb{R} \\ |s-t|\leq \beta (t)}} \ln |f(s)|\geq -\;\! \beta (t)\;\! ,\qquad t>0\;\! .
\end{equation}
\end{lemma}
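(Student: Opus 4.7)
The plan has three stages: verify the structural claims about $\omega_0$ and $f$, then refute (\ref{main-lemma2}) by contradiction. To see $\omega_0\in\mathbf{\Omega}$, I list the exponents $2^j$ with multiplicity $n_j$ as an increasing sequence $0<t_1\leq t_2\leq\dots$; the initial term is finite because (\ref{main-lemma-cond2}) forces some $n_j\geq 1$, and $\sum 1/t_k=\sum_j n_j/2^j<+\infty$ by (\ref{main-lemma-cond1}). For the majorization (\ref{main-lemma1}) I apply $|1-w|\leq 1+|w|$ termwise to $w=(z/2^k)^2$, obtaining $|1-(z/2^k)^2|\leq 1+|z|^2/4^k$, and take products.

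For the refutation, assume that some increasing $\beta$ with $\int_1^{+\infty}\beta(t)/t^2\,dt<+\infty$ satisfies (\ref{main-lemma2}). Theorem \ref{increasing} gives $\beta(t)/t\to 0$, so $\beta(2^j)\leq 2^{j-1}$ for all $j\geq J_0$, say. For each such $j$ pick $s_j\in[2^j-\beta(2^j),\,2^j+\beta(2^j)]$ realizing $\ln|f(s_j)|\geq -\beta(2^j)$. The key step is to isolate the single factor of $f$ that vanishes near $2^j$: writing $s_j=2^j+u_j$ with $|u_j|\leq\beta(2^j)\leq 2^{j-1}$, a short computation yields $|1-(s_j/2^j)^2|=\tfrac{|u_j|}{2^j}\bigl|2+u_j/2^j\bigr|\leq \tfrac{5}{2}\,\beta(2^j)/2^j$, while the remaining factors obey
\[
\prod_{k\neq j}|1-(s_j/2^k)^2|^{n_k}\;\leq\;\frac{|\omega_0(s_j)|^2}{(1+(s_j/2^j)^2)^{n_j}}\;\leq\;|\omega_0(s_j)|^2,
\]
using $|1-(s_j/2^k)^2|\leq 1+(s_j/2^k)^2$ and $(s_j/2^j)^2\geq 1/4$. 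Taking logarithms and combining with $\ln|f(s_j)|\geq -\beta(2^j)$ rearranges to
\[
n_j\ln\tfrac{2^j}{\beta(2^j)}\;\leq\;\beta(2^j)+n_j\ln(5/2)+2\ln|\omega_0(s_j)|,\qquad j\geq J_0.
\]

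Dividing by $2^j$ and summing over $j\geq J_0$ closes the argument. The left-hand side is $+\infty$ by hypothesis (\ref{main-lemma-cond2}). On the right-hand side, $\sum\beta(2^j)/2^j<+\infty$ by Proposition \ref{msnq-discrete}(i) applied to $\beta$; $\sum n_j/2^j<+\infty$ by (\ref{main-lemma-cond1}); and $\sum\ln|\omega_0(s_j)|/2^j<+\infty$ because $|\omega_0|$ is even and increasing on $[0,+\infty)$ so $\ln|\omega_0(s_j)|\leq\ln|\omega_0(2^{j+1})|$, while Proposition \ref{msnq-discrete}(i) applied to the non-quasianalytic function $\ln|\omega_0(\cdot)|$ (Remark \ref{explicite}) delivers the summability. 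Contradiction.

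The main obstacle is controlling the product $\prod_{k\neq j}|1-(s_j/2^k)^2|^{n_k}$ after extracting the vanishing factor: the factors with $k<j$ involve $s_j/2^k\gg 1$ and are individually large. The saving device, due to Hayman, is that this entire leftover is dominated by $|\omega_0(s_j)|^2$, whose weighted logarithmic sum converges precisely because $\omega_0\in\mathbf{\Omega}$, so it cannot swallow the genuinely divergent contribution supplied by (\ref{main-lemma-cond2}).
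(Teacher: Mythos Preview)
Your argument is correct. The one place I would tighten is the phrase ``pick $s_j$ realizing $\ln|f(s_j)|\geq -\beta(2^j)$'': the supremum in (\ref{main-lemma2}) is over a compact interval and $f$ is continuous, so the sup is attained and your choice of $s_j$ is legitimate.

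Your route differs from the paper's in the key local estimate around $2^j$. The paper does not split off the vanishing factor by hand; instead it invokes the general Schwarz lemma: since $2^j$ is a zero of $f$ of multiplicity $n_j$, for $|z-2^j|\leq 2^j\delta$ one has
\[
|f(z)|\;\leq\;\Big(\sup_{|z'-2^j|=2^j}|f(z')|\Big)\,\delta^{\,n_j}\;\leq\;|\omega_0(2^{j+1})|^2\,\delta^{\,n_j},
\]
and then takes $\delta=\beta(2^j)/2^j$. This yields directly $n_j\ln\frac{2^j}{\beta(2^j)}\leq 2\ln|\omega_0(2^{j+1})|+\beta(2^j)$, without the extra $n_j\ln(5/2)$ term. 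Your approach is more elementary---pure real-variable product manipulation, no complex-analytic tool---at the cost of carrying the harmless constant $\ln(5/2)$ through the summation. The Schwarz-lemma route is slightly cleaner and would also control $|f|$ on a full disk rather than just on the real segment, but for the present lemma either argument closes the contradiction in the same way, via Proposition~\ref{msnq-discrete}(i) and Remark~\ref{explicite}(4).
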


\begin{proof}
(\ref{main-lemma-cond1}) yields $\omega_0\in\mathbf{\Omega}$ and,
since $\displaystyle \Big| 1-\Big(\frac z{2^j}\Big)^{\! 2}\Big|\leq 1+\Big(\frac{|z|}{2^j}\Big)^{\! 2}=
\Big| 1+\frac{\;\! i\;\! |z|}{2^j}\Big|^2\;\!$, (\ref{main-lemma1}) holds true.

For the remaining part of the proof, we need a particular upper estimate of $|f(z)|$ for $z$ in the disk
of radius $2^j$, centered at $2^j$.

Let $j\geq 1$ be arbitrary. Since $2^j$ is a zero of multiplicity $n_j$ of $f$, we can apply the general Schwarz'
lemma (see e.g. \cite{Pr}, Chapter XII, \S 3, Section 2, page 359, or \cite{Re}, Chapter 9,
\S 2, Exercise 1, page 274), obtaining for any $z\in\mathbb{C}\;\! , |z-2^j|\leq 2^j$,
\begin{equation*}
\begin{split}
|f(z)|\leq\;& \Big( \sup\limits_{|z'-2^j|=2^j}|f(z')|\Big)\! \Big(\frac{|z-2^j|}{2^j}\Big)^{\! n_j} \\
\overset{\rm (\ref{main-lemma1})}{\leq}&
\Big( \sup\limits_{|z'-2^j|=2^j}\big|\omega_0(|z'|)\big|^2\Big)\! \Big(\frac{|z-2^j|}{2^j}\Big)^{\! n_j}
=\big|\omega_0(2^{j+1})\big|^2\Big(\frac{|z-2^j|}{2^j}\Big)^{\! n_j}\;\! .
\end{split}
\end{equation*}
Thus, for each $0<\delta\leq 1\;\!$,
\begin{equation}\label{main-lemma3}
\ln |f(z)|\leq 2 \ln \big|\omega_0(2^{j+1})\big| +n_j\ln\delta\;\! ,\qquad z\in\mathbb{C}\;\! , |z-2^j|\leq 2^j\delta\;\! .
\end{equation}

Now we assume that for some increasing $\beta : (0\;\! ,+\infty )\longrightarrow (0\;\! ,+\infty )$ with

\noindent $\displaystyle \int\limits_1^{+\infty}\! \frac{\beta (t)}{t^2}\;\!{\rm d}t<+\infty$ we have
(\ref{main-lemma2}) and show that this assumption leads to a

\noindent contradiction.

By (\ref{main-lemma2}) we have
\begin{equation}\label{main-lemma4}
\sup\limits_{\substack{s\in\mathbb{R} \\ |s-2^j|\leq \beta (2^j)}} \ln |f(s)|\geq -\;\! \beta (2^j)\;\! ,\qquad j\geq 1\;\! .
\end{equation}
On the other hand, using Theorem \ref{increasing}, we deduce $\displaystyle \lim\limits_{t\to\infty}
\frac{\beta (t)}t=0\;\!$, so there exists $j_0\geq 1$ such that
\begin{equation*}
\frac{\beta (2^j)}{2^j}\leq 1\;\! ,\qquad j\geq j_0\;\! .
\end{equation*}
Applying (\ref{main-lemma3}) with $\displaystyle \delta=\frac{\beta (2^j)}{2^j}\;\!$, we obtain
\begin{equation}\label{main-lemma5}
\sup\limits_{\substack{z\in\mathbb{C} \\ |z-2^j|\leq \beta (2^j)}}\ln |f(z)|\leq 2 \ln \big|\omega_0(2^{j+1})\big|
+n_j\ln \frac{\beta (2^j)}{2^j}\;\! ,\qquad j\geq j_0\;\! .
\end{equation}

(\ref{main-lemma4}) and (\ref{main-lemma5}) imply successively for every $j\geq j_0$
\begin{equation*}
\begin{split}
&-\;\! \beta (2^j)\leq 2 \ln \big|\omega_0(2^{j+1})\big| +n_j\ln \frac{\beta (2^j)}{2^j}\;\! , \\
&\hspace{3 mm}n_j\ln \frac{2^j}{\beta (2^j)}\leq 2 \ln \big|\omega_0(2^{j+1})\big| +\beta (2^j)\;\! .
\end{split}
\end{equation*}
Consequently
\begin{equation*}
\sum\limits_{j=j_0}^{\infty}\frac{n_j}{2^j}\ln \frac{2^j}{\beta (2^j)}\leq 4\sum\limits_{j=j_0}^{\infty}
\frac{\ln \big|\omega_0(2^{j+1})\big|}{2^{j+1}} +\sum\limits_{j=j_0}^{\infty}\frac{\beta (2^j)}{2^j}\;\! .
\end{equation*}
But this is not possible, because the left-hand side of the above inequality is $+\infty$
according to the assumption (\ref{main-lemma-cond2}), while the right-hand side is finite
because of Remark \ref{explicite} (4) and Proposition \ref{msnq-discrete} (i).

\end{proof}

Now we are ready to prove Theorem \ref{no-min}$\;\! ,$ the main goal of this section :
\medskip

%
%


{\it Proof} (of {\bf Theorem \ref{no-min}}$\;\!\!$).
Let $\;\!\displaystyle 0<t_1\leq t_2\leq t_3\leq\;\! ...\;\! \leq +\infty\, ,\, t_1<+\infty\, ,\,
\sum\limits_{j=1}^{\infty}\frac 1{t_j}<+\infty\;\!$, be such that
\smallskip

\centerline{$\displaystyle \omega (z)=\prod\limits_{j=1}^{\infty}\Big( 1+\frac{iz}{t_j}\Big)\;\! ,
\qquad z\in\mathbb{C}\;\! .$}
\smallskip

Let us denote, for every $t>0\;\!$, by $n(t)$ the number of the elements of

\noindent the set $\{ k\geq 1 ; t_k\leq t\}\;\!$. By Remark \ref{explicite} (1) we have
$\displaystyle \int\limits_{1}^{\infty}\frac{\;\! n(t)}{t^2}\;\!{\rm d}t<+\infty$ and,

\noindent according to Proposition \ref{msnq-discrete} (i), it follows
$\displaystyle \sum\limits_{j=1}^{\infty}\frac{\;\! n(2^j)}{2^j}<+\infty\;\!$.

On the other hand, Theorem \ref{msnq-omega} yields
\begin{equation*}
\int\limits_1^{+\infty}\frac{n(t)|}{t^2} \ln \frac t{n(t)}\;\!{\rm d}t=+\infty\;\! .
\end{equation*}
Applying Proposition \ref{msnq-discrete} (ii) to $(0\;\! ,+\infty )\ni t\longmapsto n(t)\;\!$, we deduce that
\medskip

\centerline{$\displaystyle \sum\limits_{j=1}^{\infty}\frac{n(2^j)}{2^j}\ln \frac{2^j}{\beta (2^j)}=+\infty$}

\noindent for any increasing $\beta : (0\;\! ,+\infty )\longrightarrow (0\;\! ,+\infty )$ satisfying
$\displaystyle \int\limits_1^{+\infty}\!\frac{\beta (t)}{t^2}\;\!{\rm d}t<+\infty\;\!$.

Set
\begin{equation*}
n_1:=n(2)\;\! ,\qquad n_j:=n(2^j)-n(2^{j-1})\text{ for }j\geq 2\;\! .
\end{equation*}
By Proposition \ref{msnq-permanence2} we infer that $\displaystyle \sum\limits_{j=1}^{\infty}
\frac{\;\! n_j}{2^j}<+\infty$ and

\centerline{$\displaystyle \sum\limits_{j=1}^{\infty}\frac{n_j}{2^j}\ln \frac{2^j}{\beta (2^j)}=+\infty$}

\noindent for any increasing $\beta : (0\;\! ,+\infty )\longrightarrow (0\;\! ,+\infty )$ satisfying
$\displaystyle \int\limits_1^{+\infty}\!\frac{\beta (t)}{t^2}\;\!{\rm d}t<+\infty\;\!$.
In other words, the sequence $(n_j)_{j\geq 1}$ satisfies conditions (\ref{main-lemma-cond1})
and (\ref{main-lemma-cond2}), so Lemma \ref{main-lemma} implies that the formula
\smallskip

\centerline{$\displaystyle
f(z)=\prod\limits_{j=1}^{\infty}\bigg( 1-\Big(\frac z{2^j}\Big)^{\! 2}\bigg)^{\! n_j},\qquad z\in\mathbb{C}$}
\smallskip

\noindent defines an entire function $f$ such that there exists no increasing function
$\beta : (0\;\! ,+\infty )\longrightarrow (0\;\! ,+\infty )$ with
$\displaystyle \int\limits_1^{+\infty}\! \frac{\beta (t)}{t^2}\;\!{\rm d}t<+\infty$ satisfying
(\ref{main-lemma2}) = (\ref{minimum}).

It remains only to verify (\ref{dominated})$\;\! :$ we have for every $z\in\mathbb{C}$
\medskip

\noindent\hspace{1.11 cm}$\displaystyle
|f(z)|\leq\prod\limits_{j=1}^{\infty}\bigg( 1+\Big(\frac{|z|}{2^j}\Big)^{\! 2}\bigg)^{\! n_j}$

\noindent\hspace{2.16 cm}$\displaystyle =
\bigg( 1+\Big(\frac{|z|}{2}\Big)^{\! 2}\bigg)^{\! n(2)}
\prod\limits_{j=2}^{\infty}\bigg( 1+\Big(\frac{|z|}{2^j}\Big)^{\! 2}\bigg)^{\! n(2^j)-n(2^{j-1})}$

\noindent\hspace{2.16 cm}$\displaystyle \leq
\bigg[ \prod\limits_{k=1}^{n(2)}\bigg( 1+\Big(\frac{|z|}{t_k}\Big)^{\! 2}\bigg)^{\! n_j}\bigg]
\prod\limits_{j=2}^{\infty} \bigg[  \prod\limits_{k=n(2^{j-1})+1}^{n(2^j)}
\bigg( 1+\Big(\frac{|z|}{t_k}\Big)^{\! 2}\bigg)\bigg]$

\noindent\hspace{2.16 cm}$\displaystyle =
\prod\limits_{k=1}^{\infty}\bigg( 1+\Big(\frac{|z|}{t_k}\Big)^{\! 2}\bigg)
=\big|\omega (|z|)\big|^2\;\! .$

\noindent\hspace{12.3 cm}$\square$

\end{document}